\documentclass[11pt,reqno]{amsart}
\usepackage[margin=1.1in]{geometry}

\usepackage{amsmath,amsthm,amssymb}
\usepackage{mathrsfs} 
\usepackage{mathabx}
\usepackage{array}

\usepackage{tikz}
\usepackage{pgfplots}

\usetikzlibrary{arrows}

\usepackage{bm} 
\usepackage{amsmath, amsfonts, amssymb, amsthm} 
\usepackage{style}
\usepackage{enumerate}
\usepackage{mathrsfs}
\usepackage{graphicx} 
\usepackage{float} 
\usepackage{enumitem} 
\usepackage{fancyhdr} 
\usepackage{comment}
\usepackage{caption}  
\usepackage{booktabs} 
\usepackage[comma,sort, numbers]{natbib}
\usepackage{subcaption}

\numberwithin{equation}{section}

\def\R{\mathbb{R}}

\def\N{\mathbb{N}}
\def\P{\mathbb{P}}
\def\E{\mathbb{E}}

\def\Var{\mathrm{Var}}
\def\Cov{\mathrm{Cov}}

\def\Beta{\mathrm{Beta}}

\def\1{\mathbbm{1}}

\allowdisplaybreaks

\title[Thresholds and Fluctuations for Colorful Arithmetic Progressions]{Thresholds and Fluctuations for Colorful Arithmetic Progressions in Sparse Random Colorings }

\author[Bhattacharya]{Bhaswar B. Bhattacharya} 
\address{University of Pennsylvania and National University of Singapore} \email{bhaswar@wharton.upenn.edu}

\author[Bhowal]{Sanchayan Bhowal}
\address{Stanford University} 
\email{sbhowal@stanford.edu}

\author[Sengupta]{Atmadeep Sengupta}
\address{Indian Statistical Institute, Kolkata} 
\email{mb2613@isical.ac.in}

\begin{document}
\begin{abstract}
In this paper, we derive thresholds and fluctuations for arithmetic progressions with prescribed color patterns in sparse random colorings of $[n]:=\{1, 2, \ldots, n\}$, where each element of $[n]$ is colored independently according to a given probability vector. For any admissible ordered palette of colors, we determine the full multi-parameter threshold region for the appearance of a colorful arithmetic progression. The threshold is governed by two competing mechanisms: a global first-moment condition and a local color-availability condition, resulting in a polyhedral satisfiability region, with a piecewise-polyhedral threshold surface. In the satisfiability region we establish asymptotic normality for the number of colorful arithmetic progressions of a given length, with an explicit rate of convergence in Wasserstein distance. On the threshold surface, we identify three distinct asymptotic regimes: Poisson, compound Poisson with mixed Poisson jumps, and compound Poisson with uniform jumps, after an appropriate normalization.  These results provide a complete description of the threshold and fluctuation behavior of general colored arithmetic progressions under sparse random colorings, in a unified  framework that interpolates between classical uncolored/monochromatic progressions in binomial random subsets and multicolored, including rainbow, arithmetic progressions.  
\end{abstract}

\subjclass[2020]{60C05, 11B25, 60F05}

\keywords{Arithmetic progressions, Poisson and Normal approximations, Threshold phenomena, Stein's method.}

\maketitle

\section{Introduction}

The study of arithmetic progressions in subsets of the integers includes some of the central problems in additive combinatorics and Ramsey theory. Celebrated theorems of \citet{Roth1953} and \citet{Szemeredi1975} establish the existence of arithmetic progressions of any fixed length in every subset of the integers with positive upper density. Another classical result is van der Waerden's theorem \cite{vanDerWaerden1927}, which shows that every coloring of a sufficiently long interval of integers contains a {\it monochromatic} arithmetic progression of a prescribed length. Complementary to the notion of monochromatic progressions, in which every term receives the same color, is that of {\it rainbow} progressions, in which all terms receive distinct colors. The systematic study of rainbow arithmetic progressions was initiated by \citet{JungicLichtMahdianNesetrilRadoicic2003} and has since developed into an arithmetic analogue of anti-Ramsey theory (see, for example, \cite{JungicRadoicic2003,AxenovichFonDerFlaass2004,ConlonJungicRadoicic2007,ButlerEtAl2016,BerikkyzySchulteYoung2017,AxenovichMartin2006} among others).

On the probabilistic side, a natural question is to study the fluctuations in the number of arithmetic progressions of a fixed length in a random subset of the integers. Formally, for an integer $r \geq 3$, let $X_r(n,p)$ denote the number of $r$-term arithmetic progressions ($r$-APs) contained in the binomial random subset $[n]_p$, which is a random subset of $[n]:=\{1,2,\ldots,n\}$ where each element is included independently with probability $p\in(0,1)$. It is well known that $X_r(n,p)$ exhibits a threshold at $p^*_{r,n}:=n^{-\frac{2}{r}}$ (see \cite[Example~3.2]{JLR}). More precisely, if $p\ll p^*_{r,n}$, then with high probability $[n]_p$ contains no $r$-AP, while for $p\gg p^*_{r,n}$, it contains at least one $r$-AP with high probability (see Section~\ref{sec:ab} for formal definitions of the asymptotic notation). Moreover, at the threshold (where $p\asymp p^*_{r,n}$), $X_r(n,p)$ is asymptotically Poisson, and  above the threshold (where $p\gg p^*_{r,n}$), $X_r(n,p)$ satisfies a central limit theorem (see \cite[Theorem~1]{barhoumi2019bivariate}). There is also a long line of interesting work on obtaining precise asymptotics for the tail probabilities of $X_r(n,p)$, that is, for the probabilities that $X_r(n,p)$ deviates significantly above or below its typical value \cite{ChatterjeeDembo2016,HarelMoussetSamotij2024,GriffithsKochSecco2023,Warnke2017,BhattacharyaGangulyShaoZhao2020,BrietGopi2020,MoussetNoeverPanagiotouSamotij2020,FizPontiverosGriffithsSeccoSerra2022,lowertail,uppertail}.

In this paper, we study thresholds and fluctuations for arithmetic progressions with general color constraints under sparse random colorings. To this end, given an integer $c \geq 2$, recall that a $c$-coloring of a set $S \subset [n]$ is a map $\phi: S \rightarrow [c] := \{1, \ldots, c\}$, where $\phi(s)$ denotes the color of the element $s \in S$. For $r \geq 3$, an \emph{$r$-palette} is an ordered sequence of colors $\bm a =(a_1,\ldots, a_r)\in [c]^r$. Using an $r$-palette one can encode the color pattern of an $r$-AP as follows: 


\begin{definition}[Colorful arithmetic progressions]
Fix integers $c \geq 2$ and $r \geq 3$. Given a $c$-coloring $\phi_n$ of $[n]$ and an $r$-palette $\bm a$, an $r$-AP $\bm x = (x, x+d,\ldots, x+(r-1)d)$, where $d \geq 1$ and $x + (s -1) d \in [n]$, for all $s \in [r]$, is said to be $\bm a$-{\it colorful} if $\phi_n(x+(s-1)d) = a_s$ for every $s \in[r]$.
\label{defn:rainbow_ap}
\end{definition}

The above definition includes both monochromatic and rainbow arithmetic progressions as special cases, and allows for more general color constraints that interpolate between these two extremes. 
%
%
Our object of interest in this paper is $X_{r, n}(\bm a)$, the number of $\bm a$-colorful $r$-APs in a random coloring of $[n]$, where each element of $[n]$ is assigned a color in $[c]$ independently according to a probability vector $\bm p=(p_1,\ldots,p_c)$. To determine the threshold region for $X_{r, n}(\bm a)$, we consider a polynomially sparse parametrization of the probabilities $p_1,p_2,\ldots,p_{c-1}$ (see \eqref{eq:probabilitypolynomial}) and restrict our attention to admissible palettes, in which the $c$-th color, whose probability is $1-o(1)$, is omitted. This can be naturally interpreted as randomly coloring a random subset of $[n]$, which in the special case $c=2$, reduces to the study of $X_r(n,p)$, the number of (uncolored) $r$-APs in a binomial random subset $[n]_p$ (see Remark \ref{remark:polynomial}). 
Under this parametrization we obtain the following results: 

\begin{itemize} 

\item First, we determine the precise threshold condition under which
$X_{r, n}(\bm a)$ converges in probability to zero or to infinity (Theorem \ref{thm:region}). In particular, there are two distinct mechanisms governing the threshold: (1) a global first-moment condition, which compares the probability that a fixed progression has the prescribed color pattern with the order $n^2$ of the number of $r$-APs in $[n]$, and (2) a color-availability condition, which identifies the scale at which the rarest color appearing in the palette occurs only a bounded number of times in $[n]$.  The resulting satisfiability region, that is, parameter values for which  $X_{r, n}(\bm a)$ diverges to infinity,  is a polyhedral region in the positive orthant of $\R^{c-1}$. Further, the boundary of the satisfiability region (the threshold surface) consists of three components: a hyperplane boundary (that is determined by the global first-moment), a cube boundary (that is determined by the rare-color condition), and their intersection. 

\item Next, we show that for all parameter values in the satisfiability region, $X_{r,n}(\bm a)$, after appropriate centering and scaling, satisfies a central limit theorem. Further, our
result provides an explicit convergence rate for this normal approximation in the Wasserstein distance (see Theorem \ref{thm:Z}). 

\item For parameter values on the boundary of the satisfiability region (the threshold surface), the limiting fluctuations depend on the component of the boundary on which they lie, as summarized below: 

\begin{itemize}

\item For points solely on the first-moment (hyperplane) boundary, $X_{r,n}(\bm a)$ converges to a Poisson distribution. Here, every color occurring in the palette is represented by a diverging number of sites, and the Poisson behavior arises from the rarity of the colorful progressions themselves. An explicit rate of convergence for this Poisson approximation in the Total Variation distance is provided (see Theorem \ref{thm:poisson}). 

\item For points on the intersection of the hyperplane boundary and  rare-color (cube) boundary, $X_{r,n}(\bm a)$ converges to a compound Poisson distribution with rate 1 and mixed Poisson jump distribution (see Theorem \ref{thm:intersectiondistribution}). Here, a color in the palette occurs only $O(1)$ times, and its locations converge, after rescaling, to a Poisson point process on $[0,1]$. Moreover, the joint distribution of the number of colorful $r$-APs whose rare coordinates are pinned at specific locations converges to independent Poisson random variables, which are conditionally independent given the limiting Poisson point process. Consequently, the limiting count is obtained by summing these Poisson contributions over the random locations of the rare color, which gives the compound Poisson distribution. 

\item Finally, for points solely on the cube boundary,  $X_{r,n}(\bm a)$ after an appropriate polynomial normalization converges to a compound Poisson distribution with rate 1 and uniform jump distribution (see Theorem \ref{thm:cubedistribution}). 
 
\end{itemize}

\end{itemize}
Taken together, these results identify the multi-parameter phase diagram for general colored arithmetic progressions in sparse random colorings, and provide a complete description of the limiting fluctuation regimes across the satisfiability region and the threshold boundary.

 \subsection{Related Work}   
\label{sec:related}

Beyond arithmetic progressions, \citet{RueSpiegelZumalacarregui2018} determined thresholds and established Poisson approximation results for the number of solutions to systems of linear equations in binomial random subsets, under suitable non-degeneracy assumptions. More recently, \citet{RueWotzel2022} established asymptotic normality for the number of solutions to broad classes of systems of linear equations in random sets. Extending these results to the colored setting is a natural direction for future work.

There is also an extensive literature on thresholds for van der Waerden-type results in sparse random subsets of the integers. In this setting, one seeks to determine the threshold for $p$ at which, with high probability, every coloring of $[n]_p$ with a fixed number of colors contains a monochromatic arithmetic progression of a prescribed length (see, for example, \cite{RodlRucinski1997,FriedgutHanPersonSchacht2016,Zohar2022}). Recently, \citet{AlvaradoEtAl2025} also proved a sparse random analogue of the canonical van der Waerden theorem, determining the threshold at which a binomial random subset of $[n]$ has the property that every coloring contains either a monochromatic or a rainbow $r$-AP. These results form part of the broader theory of threshold phenomena for extremal properties in sparse random structures (see, for example, the celebrated papers \cite{BaloghMorrisSamotij2015,ConlonGowers2016,hypergraph,discrete} and the references therein). In these results, the coloring of the random host set is arbitrary or adversarial. In contrast, in the present paper the coloring itself is random, and the object of study is the number of arithmetic progressions realizing a prescribed ordered palette.

\subsection{Asymptotic Notation}   
\label{sec:ab}

Throughout the paper, we will use the following asymptotic notations: For two sequences $a_n$ and $b_n$ we will write $a_n =O( b_n)$ if for all $n$ large enough $a_n \leq C _1 b_n$, for some constant $C_1 > 0$. 
We will also use $a_n \lesssim b_n$, $a_n \gtrsim b_n$, and $a_n \asymp b_n$, to, respectively,  denote  $a_n \leq C_1 b_n$, $a_n \geq C_2 b_n$, and $C_2 b_n \leq a_n \leq C_1 b_n$, for $n$ large enough and constants $C_1, C_2 > 0$. Subscripts in the above notation, for example $O_{\square}$ and $\lesssim_{\square}$, will mean that the hidden constants may depend on the subscripted parameters. Moreover, $a_n =o( b_n)$, and $a_n \sim b_n$ mean $a_n/b_n \rightarrow 0$, and $a_n/b_n \rightarrow 1$, as $n \rightarrow \infty$. 
Throughout, $\frac 10=\infty$, by convention.

\section{Statements of Results} 
\label{sec:results}


\subsection{Thresholds for Colorful Arithmetic Progressions}  
\label{sec:thres}

In this section we derive the threshold region for colorful arithmetic progressions. Towards this, define 
$$\Delta_c = \left\{ \bm p = (p_1, p_2, \ldots, p_c): p_a \in [0, 1] \text{ and } \sum_{a=1} ^c p_a = 1 \right \}, $$
which is the probability simplex in $\R^c$. For $\bm p \in \Delta_c$, let $\mu_{\bm p}$ be the discrete probability measure on $[c]$ that assigns mass $p_a$ to the color $a \in [c]$, that is, $\mu_{\bm p}(\{a\})=p_a$, for every $a \in [c]$. A $\mu_{\bm p}$-random $c$-coloring of $[n]$ is a random $c$-coloring of $[n]$ where each element $ i \in [n]$ is independently assigned a color in $[c]$ according to $\mu_{\bm p}$. Given an $r$-palette $\bm a$, denote by $X_{r, n}(\bm a)$ the number of $\bm a$-colorful $r$-APs in $[n]$ in a $\mu_{\bm p}$ random $c$-coloring. To determine parameter values for which $X_{r, n}(\bm a)$ transitions from zero to infinity, one must consider the sparse regime in which the probabilities in $\bm p$ decay to zero with $n$. Specifically, we consider the following polynomial parametrization, which will be convenient for stating and visualizing our results:
\begin{equation}\label{eq:probabilitypolynomial}
p_a=n^{-\theta_a}, \text{ for } a \in[c -1],
\end{equation}
where $\theta_1,\ldots, \theta_{c-1} > 0$, and $p_c=1-\sum_{a=1}^{c-1} p_a \in (0, 1)$, for all sufficiently large $n$. The $c$-th color will be referred to as the {\it ghost color}, and an $r$-palette $\bm a \in [c-1]^r$ that avoids the ghost color will be referred to as an {\it admissible} $r$-palette. 


\begin{remark}[Random colorings of random subsets]  
As mentioned earlier, to obtain the phase diagram for colorful $r$-APs, one must consider coloring probabilities in a sparse regime. However, because of the simplex constraint, not all coloring probabilities can vanish simultaneously. In fact,  under the parametrization \eqref{eq:probabilitypolynomial}, the ghost color $c$ satisfies $p_c=1-o(1)$. Consequently, each occurrence of color $c$ in a fixed palette does not affect the polynomial order of the probability of that palette. We therefore restrict our attention to palettes that do not contain the ghost color. This restriction also has a natural interpretation in terms of random colorings of random subsets of integers. For instance, suppose $A$ is a random subset of $[n]$ obtained by including each element independently with probability $q$, and then each element of $A$ is independently assigned a color $a \in [c-1]$ with probability $q_a$, where $q_1, q_2, \ldots, q_{c-1} \in [0, 1]$ satisfy $\sum_{a=1}^{c-1} q_a = 1$. If we regard the elements of $A^c := [n] \backslash A$ as being assigned the ghost color $c$, then this corresponds to a random $\mu_{\bm p}$-coloring of $[n]$ with parameters
$p_a= q q_a$, for $a \in [c-1]$, and $p_c= 1-q$. Conversely, given a $\mu_{\bm p}$-random coloring of $[n]$, one can obtain a random $(c-1)$-colored subset $A$ of the above form by choosing 
$$q = \sum_{a=1}^{c-1} p_a \quad \text{ and } \quad q_a= \frac{p_a}{q} = \frac{p_a}{\sum_{a=1}^{c-1} p_a}, $$
for $a \in [c-1]$. Hence, counting colorful APs contained in a random set $A$ with arbitrary palettes on $[c-1]$ is equivalent to counting colorful APs in $[n]$ with admissible palettes. The special case $c=2$ is particularly instructive. In this case, there is only one admissible $r$-palette $\bm a=(1,\ldots,1)$. Then the set $A \subset [n]$ of elements colored 1 is precisely a binomial random subset of $[n]$, in which each element is included independently with probability $p_1$. Consequently, when $c=2$, $X_{r,n}(\bm a)$ is simply the number of (uncolored/monochromatic) $r$-APs contained in a binomial random subset of $[n]$, a quantity that  has been extensively studied in a variety of contexts (as mentioned in the Introduction). It is also worth noting that the parametrization \eqref{eq:probabilitypolynomial} suppresses subpolynomial factors. Nevertheless, it provides a convenient way to describe the phase diagram at the level of polynomial exponents, which is sufficient for our purposes.  
\label{remark:polynomial}
\end{remark}

The following result gives the threshold for $X_{r, n}(\bm a)$, for any admissible palette $\bm a$. For this, denote by $S(\bm a) := \{a \in [c-1] : \text{ color } a \text{ appears in the palette } \bm a\}$, 
that is, the set of distinct colors appearing in the palette $\bm a$.

\begin{theorem} 
Fix integers $c \geq 2$ and $r \geq 3$ and suppose $\bm a$ is an admissible $r$-palette. Then the following holds: 
    \begin{equation}\label{eq:XHGn}
       X_{r, n}(\bm a)  \xrightarrow{P} 
       \begin{cases}
           0 & \sum_{a \in S(\bm a)} w_a \theta_{a} > 2 \text{ or } \theta_{\mathrm{max}}>1, \\
           \infty & \sum_{a \in S(\bm a)} w_a \theta_{a} < 2 \text{ and } \theta_{\mathrm{max}}<1,
       \end{cases} 
  \end{equation}  
where $w_a$ is the number of times the color $a \in S(\bm a)$ appears in $\bm a$ and $\theta_{\max} = \max_{a \in S(\bm a)} \theta_{a}$. 
\label{thm:region}
\end{theorem}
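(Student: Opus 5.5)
We handle the two regimes separately, writing $\beta := \sum_{a \in S(\bm a)} w_a \theta_a$. Throughout we use the fact that the number of $r$-APs in $[n]$ is $N_n \asymp n^2$; indeed $N_n \sim n^2/(2(r-1))$. For a fixed $r$-AP $\bm x$, the probability that it is $\bm a$-colorful is $\prod_{s=1}^r p_{a_s} = n^{-\beta}$. Hence
$$\E X_{r,n}(\bm a) \asymp n^{2-\beta}.$$

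\emph{The zero regime.} If $\beta>2$, then $\E X_{r,n}(\bm a)\to 0$, and Markov's inequality gives $X_{r,n}(\bm a)\xrightarrow{P}0$. If instead $\theta_{\max}>1$, let $a^*$ attain the maximum and let $Y_{a^*}$ be the number of elements of $[n]$ colored $a^*$. Then $\E Y_{a^*} = n^{1-\theta_{a^*}}\to 0$, so $\P(Y_{a^*}\ge 1)\to 0$. Every $\bm a$-colorful AP contains a site of color $a^*$, so $X_{r,n}(\bm a)=0$ with high probability. This step is routine.

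\emph{The infinity regime.} Assume $\beta<2$ and $\theta_{\max}<1$. Then $\mu_n:=\E X_{r,n}(\bm a)\to\infty$, and we use the second moment method: it suffices to show $\Var X_{r,n}(\bm a)=o(\mu_n^2)$, after which Chebyshev gives $X_{r,n}(\bm a)/\mu_n\xrightarrow{P}1$. Write $X=\sum_{\bm x} I_{\bm x}$. Pairs of APs with disjoint supports are independent and contribute nothing to the variance. For overlapping pairs, $\E[I_{\bm x}I_{\bm y}]$ is nonzero only if the palette constraints agree on the common sites. In that case it equals $n^{-2\beta}\prod_{j\in \bm x\cap \bm y} n^{\theta_{a(j)}}$, where $a(j)$ is the color forced at site $j$.

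We split the overlapping pairs by the size $k=|\bm x\cap\bm y|$.

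If $k=1$, the number of pairs is $O(n^3)$: choose $\bm x$ in $O(n^2)$ ways, the shared site and the position of $\bm y$ in $O(1)$ ways, and the common difference of $\bm y$ in $O(n)$ ways. The factor from the shared site is at most $n^{\theta_{\max}}$. So this contribution is
$$O\bigl(n^{3-2\beta+\theta_{\max}}\bigr)=o(n^{4-2\beta}),$$
precisely because $\theta_{\max}<1$.

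If $k\ge 2$, two shared sites together with their positions in $\bm y$ determine $\bm y$ up to $O(1)$ choices. The number of pairs is therefore $O(n^2)$. For the probability factor we need that the colors on the shared set $T\subseteq \bm x$ satisfy
$$\sum_{j\in T}\theta_{a(j)}\le \beta,$$
which holds because $T$ is a subset of the sites of $\bm x$ and all $\theta_a>0$. The contribution is then at most $O(n^{2-2\beta+\beta})=O(n^{2-\beta})=o(n^{4-2\beta})$, since $\beta<2$. (When $T=\bm x$ this is just the diagonal term $\mu_n$.)

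Summing the cases gives $\Var X=o(\mu_n^2)$, which finishes the proof. The main obstacle is this overlap bookkeeping, specifically making the $k=1$ count tight. It is exactly where the condition $\theta_{\max}<1$ enters, so we must show that single-site overlaps number at most $O(n^3)$ uniformly in the choice of positions.
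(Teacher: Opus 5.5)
Your proof is correct and follows the paper's argument. For the $0$-statement you use Markov's inequality on $X_{r,n}(\bm a)$, and on the number of sites carrying the color that attains $\theta_{\max}$. For the $\infty$-statement you use Chebyshev with an overlap-size decomposition of the variance: disjoint pairs contribute nothing, one common point gives $O(n^{3-2\beta+\theta_{\max}})$, and two or more common points give $O(n^{2-\beta})$. The only difference is that the paper proves the two-sided estimate $\Var[X_{r,n}(\bm a)]\asymp n^{2-\beta}+n^{3-2\beta+\theta_{\max}}$, using a sharper $n^{-\beta-\theta_{\min}}$ bound for distinct pairs sharing at least two points, because it reuses this estimate in the CLT; your cruder upper bound is enough for this theorem.
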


The proof of Theorem \ref{thm:region} relies on standard applications of the first and second moment methods (see Section \ref{sec:thresholdpf}). Theorem \ref{thm:region}, in particular, shows that the {\it satisfiability region}, that is, the set of parameter values for which the number of $\bm a$-colorful $r$-APs diverges in probability, is given by the set:
\begin{align}\label{eq:manycopies}
\Delta_{\bm a}^+ = \left \{ (\theta_a)_{a \in S(\bm a)}  \in \R^{|S(\bm a)|}_{ > 0} : \sum_{a \in S(\bm a)} w_a \theta_{a} < 2 \text{ and } \theta_{\mathrm{max}}<1 \right \}  .  
\end{align} 
Note that $\Delta_{\bm a}^+$ is formed by the intersection of finitely many open halfspaces. Hence, $\Delta_{\bm a}^+$ is an open polyhedral region in the positive orthant of $\R^{|S(\bm a)|}$. The {\it unsatisfiability region},
\begin{align*}
\Delta_{\bm a}^- = \left \{ (\theta_a)_{a \in S(\bm a)}  \in \R^{|S(\bm a)|}_{ > 0} : \sum_{a \in S(\bm a)} w_a \theta_{a} > 2 \text{ or } \theta_{\mathrm{max}} > 1 \right \}  ,  
\end{align*}
consists of the parameter values for which the number of $\bm a$-colorful $r$-APs converges to zero in probability.  These two regions are separated by the {\it threshold surface}, which can be described as: 
\begin{align*}
\partial_{\bm a}
=
\left \{
(\theta_a)_{a \in S(\bm a)}
\in \R^{|S(\bm a)|}_{>0} : \left( \sum_{a \in S(\bm a)} w_a \theta_a \leq 2, \theta_{\mathrm{max}} = 1 \right) \text{ or } 
\left(
\sum_{a \in S(\bm a)} w_a \theta_a = 2, \theta_{\mathrm{max}} \leq 1
\right)
\right\}  .  
\end{align*}
In particular, the threshold surface consists of the following components:  
\begin{itemize}

\item $\sum_{a \in S(\bm a)} w_a \theta_a < 2, \theta_{\mathrm{max}} = 1$: This is the intersection of the open halfspace $\sum_{a \in S(\bm a)} w_a \theta_a < 2$ 
with the boundary $\theta_{\mathrm{max}} = 1$ of the unit cube in the positive orthant of $\R^{|S(\bm a)|}$. If this intersection is non-empty, it is a polyhedral surface consisting of finitely many relatively open hyperplane pieces, each of codimension $1$. We will refer to this component of $\partial_{\bm a}$ as the {\it threshold cube boundary}.

\item $\sum_{a \in S(\bm a)} w_a \theta_a = 2, \theta_{\mathrm{max}} < 1$: This is the intersection of the hyperplane $\sum_{a \in S(\bm a)} w_a \theta_a = 2$ with the interior of the unit cube in the positive orthant of $\R^{|S(\bm a)|}$. Hence, if non-empty, it is a relatively open piece of a hyperplane of codimension $1$. This component of $\partial_{\bm a}$ will be referred to as the {\it threshold hyperplane boundary}.

\item $\sum_{a \in S(\bm a)} w_a \theta_a = 2, \theta_{\mathrm{max}} = 1$: This is the intersection of the hyperplane $\sum_{a \in S(\bm a)} w_a \theta_a = 2$ with the boundary of the unit cube in the positive orthant of $\R^{|S(\bm a)|}$. If non-empty, it consists of finitely many polyhedral pieces of codimension $2$. This will be referred to as the {\it threshold intersection locus}.  

\end{itemize}

\begin{remark}
Note that the threshold cube boundary and the threshold intersection
locus can be nonempty only if a color with exponent $1$ appears
exactly once in the palette. To see this, suppose that $\theta_a=1$, for some $a \in S(\bm a)$. On the cube boundary, $w_a \ge2$ would imply $\sum_{a\in S(\boldsymbol a)}w_a\theta_a\ge2$, which is a contradiction. On the intersection locus, if $w_a\ge2$, then either $w_a\ge3$ or $w_a=2$. In the first case, $\sum_{a\in S(\boldsymbol a)}w_a\theta_a\ge3$, which is a contradiction. In the second case, since $r\ge3$, at least one additional coordinate occurs in the palette, and its positive exponent makes $\sum_{a\in S(\boldsymbol a)}w_a\theta_a > 2$. This shows that $w_a=1$. The same argument shows that there is at most one coordinate whose corresponding exponent equals $1$.
\end{remark}

To illustrate the result in Theorem \ref{thm:region} and the regions described above, we consider the following examples:

\begin{example}[APs in binomial random subsets] 
\label{example:subset} 
Suppose $c=2$ and consider the monochromatic $r$-palette $\bm a=(1,1,\ldots,1)$, for $r \geq 3$. In this case, $S(\bm a) = \{1\}$ and $w_1=r$. Further, as mentioned in Remark \ref{remark:polynomial} above, the subset $A \subseteq [n]$ consisting of the elements colored $1$ is a binomial random subset of $[n]$, in which each element is included independently with probability $p_1=n^{-\theta_1}$. Hence, $X_{r,n}(\bm a)$ is precisely the number of $r$-APs contained in $A$. Then, applying Theorem \ref{thm:region}, the satisfiability region is 
$$\Delta_{\bm a}^+ = \left\{ \theta_1>0: r\theta_1<2 \text{ and } \theta_1<1 \right\} = \left\{ 0<\theta_1<\frac{2}{r} \right\}.$$
Moreover, the unsatisfiability region is $\Delta_{\bm a}^- = \{ \theta_1>\frac{2}{r} \}$ and the threshold boundary consists of the single point $\theta_1=\frac{2}{r}$. In particular, this recovers the classical threshold at the polynomial scale for the appearance of an $r$-AP in a binomial random subset of $[n]$.
\end{example}

\begin{example}[Almost monochromatic 4-AP]
\label{example:bichromatic}

Suppose $r=4$, $c=3$, and consider the palette $\bm a=(1,1,1,2)$. In this case, $X_{4,n}(\bm a)$ counts the number of $4$-APs in a random $3$-coloring of $[n]$ in which the first three elements of the AP are colored $1$, while the last element is colored $2$. Here, $S(\bm a)=\{1,2\}$ and 
$w_1=3$, $w_2=1$. Hence, recalling \eqref{eq:manycopies}, the satisfiability region is
\begin{align*}
\Delta^+_{(1,1,1,2)}
&=
\left\{
(\theta_1,\theta_2)\in\R_{>0}^2:
3\theta_1+\theta_2<2,\,
\max\{\theta_1,\theta_2\}<1
\right\} \\
&=
\left\{
(\theta_1,\theta_2)\in\R_{>0}^2:
3\theta_1+\theta_2<2,\,
\theta_2<1
\right\},
\end{align*}
where the last equality follows from the fact that
$3\theta_1+\theta_2<2$, together with $\theta_2>0$, already implies
$0< \theta_1 <\frac{2}{3}$. This is the blue polygonal region shown in Figure
\ref{fig:diagram} (a) , where $X_{4,n}(\bm a)$ diverges in probability.
The threshold boundary is the polyline $(Q_1,Q_2,Q_3)$, excluding the
points $Q_1$ and $Q_3$, with its three components as follows:
\begin{itemize}
    \item The threshold cube boundary is the open line segment
    $(Q_1,Q_2)$ (shown in orange), with endpoints $Q_1=(0,1)$ and $Q_2=(\frac{1}{3}, 1)$. 

    \item The threshold hyperplane boundary is the open line segment
    $(Q_2,Q_3)$ (shown in green), with endpoints $Q_2=(\frac{1}{3}, 1)$  and $Q_3=(\frac{2}{3}, 0)$. 
    
    \item The threshold intersection locus consists of the single point $Q_2=(\frac{1}{3}, 1)$  marked in red.  
\end{itemize}
The remainder of the open positive quadrant, obtained by removing the
satisfiability region and the threshold boundary, is the
unsatisfiability region.  
\end{example}

\begin{figure}[t]
\centering

\begin{minipage}[t]{0.48\textwidth}
\centering

\begin{tikzpicture}[scale=4.25]

        \coordinate (O)  at (0,0);
        \coordinate (Q1) at (0,1);
        \coordinate (Q2) at (1/3,1);
        \coordinate (Q3) at (2/3,0);

        \fill[blue!30] (O) -- (Q1) -- (Q2) -- (Q3) -- cycle;

        \draw[->] (0,0) -- (0.82,0)
            node[right] {$\theta_1$};
        \draw[->] (0,0) -- (0,1.20)
            node[above] {$\theta_2$};

        \draw[orange, very thick] (Q1) -- (Q2);

        \draw[green!60!black, very thick] (Q2) -- (Q3);

        \filldraw[fill=white, draw=orange, thick]
            (Q1) circle (0.5pt);
        \filldraw[fill=white, draw=green!60!black, thick]
            (Q3) circle (0.5pt);

        \fill[red] (Q2) circle (0.5pt);

        \node[left] at (Q1)
            {$Q_1=(0,1)$};

        \node[above right] at (Q2)
            {$Q_2=\left(\frac13,1\right)$};

        \node[below] at (Q3)
            {$Q_3=\left(\frac23,0\right)$};

        \node[below left] at (O)
            {$(0,0)$};

        \node[blue] at (0.26,0.48)
            {$\Delta^+_{(1,1,1,2)}$};

    \end{tikzpicture}
    
    \caption*{(a)}

\end{minipage}
\hfill
\begin{minipage}[t]{0.48\textwidth}
\centering

\begin{tikzpicture}[
    scale=2.85,
    x={(1cm,-0.18cm)},
    y={(-0.65cm,-0.30cm)},
    z={(0cm,1cm)}
]

\coordinate (O) at (0,0,0);
\coordinate (A) at (1,0,0);
\coordinate (B) at (0,1,0);
\coordinate (C) at (0,0,1);
\coordinate (D) at (1,1,0);
\coordinate (E) at (1,0,1);
\coordinate (F) at (0,1,1);

\fill[blue!45, opacity=0.18]
    (O) -- (A) -- (D) -- (B) -- cycle;

\fill[blue!45, opacity=0.18]
    (O) -- (A) -- (E) -- (C) -- cycle;

\fill[blue!45, opacity=0.18]
    (O) -- (B) -- (F) -- (C) -- cycle;

\fill[blue!45, opacity=0.18]
    (A) -- (D) -- (E) -- cycle;

\fill[blue!45, opacity=0.18]
    (B) -- (D) -- (F) -- cycle;

\fill[blue!45, opacity=0.18]
    (C) -- (E) -- (F) -- cycle;

\fill[blue!45, opacity=0.18]
    (D) -- (E) -- (F) -- cycle;


\fill[orange!55, opacity=0.48]
    (A) -- (D) -- (E) -- cycle;

\fill[orange!55, opacity=0.48]
    (B) -- (D) -- (F) -- cycle;

\fill[orange!55, opacity=0.48]
    (C) -- (E) -- (F) -- cycle;


\fill[green!45, opacity=0.58]
    (D) -- (E) -- (F) -- cycle;


\draw[thick] (O) -- (A);
\draw[thick] (O) -- (B);
\draw[thick] (O) -- (C);

\draw[thick] (A) -- (D);
\draw[thick] (A) -- (E);
\draw[thick] (B) -- (D);
\draw[thick] (B) -- (F);
\draw[thick] (C) -- (E);
\draw[thick] (C) -- (F);


\draw[red, very thick] (D) -- (E);
\draw[red, very thick] (E) -- (F);
\draw[red, very thick] (F) -- (D);

\filldraw[fill=white, draw=orange, thick]
    (A) circle (0.014);
\filldraw[fill=white, draw=orange, thick]
    (B) circle (0.014);
\filldraw[fill=white, draw=orange, thick]
    (C) circle (0.014);

\filldraw[fill=white, draw=red, thick]
    (D) circle (0.025);
\filldraw[fill=white, draw=red, thick]
    (E) circle (0.025);
\filldraw[fill=white, draw=red, thick]
    (F) circle (0.025);


\draw[->, thick] (O) -- (1.28,0,0)
    node[right] {$\theta_1$};

\draw[->, thick] (O) -- (0,1.28,0)
    node[left] {$\theta_2$};

\draw[->, thick] (O) -- (0,0,1.28)
    node[above] {$\theta_3$};


\node[blue!70!black] at (0.48,0.25,0.38)
    {$\Delta_{(1,2,3)}^{+}$};

\node[orange!75!black, right] at (1.08,0.095,0.62)
    {$\theta_{\max}=1$};

\node[green!50!black, right] at (0.6,0.42,1.128)
    {$\theta_1+\theta_2+\theta_3=2$};

\end{tikzpicture}

\caption*{(b)}

\end{minipage}

\caption{ \small{ The satisfiability regions and the threshold surfaces for the palettes (a) $\bm a=(1, 1, 1, 2)$ and (b) $\bm a=(1,2,3)$.  }  }  
\label{fig:diagram}

\end{figure}

\begin{example}[Rainbow 3-AP] 
\label{example:differentcolors}
Suppose $r=3$, $c=4$, and consider the palette $\bm a=(1,2,3)$. In this case, $X_{3, n}(\bm a)$ counts the number of rainbow $3$-APs in a random $4$-coloring of $[n]$ in which the first element is colored 1, the second is colored 2, and the third is colored $3$. Here, $S(\bm a)=\{1, 2, 3\}$ and $w_1=w_2=w_3=1$. Hence, the satisfiability region is
\begin{align*}
\Delta_{(1,2,3)}^+ = \left\{ (\theta_1,\theta_2,\theta_3)\in\R_{>0}^3: \theta_1+\theta_2+\theta_3<2, \max\{\theta_1,\theta_2,\theta_3\}<1 \right\}  . 
\end{align*}
Geometrically, this is the region in the open unit cube lying below the plane $\theta_1+\theta_2+\theta_3=2$. The three components of the threshold boundary are as follows (see Figure \ref{fig:diagram} (b)):  
\begin{itemize}
 
\item The threshold cube boundary is the portion of the boundary of the unit cube lying in the halfspace $\theta_1+\theta_2+\theta_3<2$ (shown in orange in Figure \ref{fig:diagram} (b)). 

\item The threshold hyperplane boundary is the portion of the plane $\theta_1+\theta_2+\theta_3=2$ lying inside the open unit cube (shown in green in Figure \ref{fig:diagram} (b)). 

\item The threshold intersection locus is the intersection of the plane $\theta_1+\theta_2+\theta_3=2$ with the boundary of the unit cube. This is the red triangle shown in Figure \ref{fig:diagram} (b), without the points $(1,1,0)$, $(1,0,1)$, and $(0,1,1)$. 

\end{itemize}
The remainder of the open positive orthant, obtained by removing the satisfiability region and the threshold boundary, is the unsatisfiability region.  
\end{example}

\subsection{Asymptotic Normality in the Satisfiability Region}
\label{normal}

Having established the threshold for the emergence of $\bm a$-colorful $r$-APs, the next natural question is to determine the limiting distribution of $X_{r,n}(\bm a)$ in the satisfiability region. For classical (uncolored) APs, it is known that the number of $r$-APs in a binomial random subset is asymptotically normal above the threshold \cite{barhoumi2019bivariate}. In this section, we derive an analogous result for general colorful $r$-APs in the satisfiability regime. Towards this, define 
\begin{align}\label{eq:ZHGn}
Z_{r, n}(\bm a) := \frac{ X_{r, n}(\bm a)  - \E [X_{r, n}(\bm a) ]}{ \sqrt{ \mathrm{Var}[X_{r, n}(\bm a) ] } } . 
\end{align}  
Further, $\mathrm{Wass}(\cdot,\cdot)$ will denote the Wasserstein distance between two random variables, or equivalently between their laws, with a slight abuse of notation depending on the context.  

\begin{theorem}\label{thm:Z}
Fix integers $c \geq 2$ and $r \geq 3$ and suppose $\bm a$ is an admissible $r$-palette. Suppose $\theta_1, \theta_2, \ldots, \theta_{c-1} > 0$ are such that $\sum_{a \in S(\bm a)} w_a \theta_{a} < 2$ and $\theta_{\mathrm{max}} < 1$. Then 
\begin{align}\label{eq:W}
\mathrm{Wass}(Z_{r, n}(\bm a) , \mathcal{N}(0,1) )  \lesssim_{r}  \max\left\{ n^{-1+\frac{1}{2} \sum_{a \in S(\bm a)} w_a \theta_{a}  }  ,  n^{-\frac{1}{2}+\frac{1}{2}  \theta_{\max}  } \right \} . 
\end{align}
In particular, $Z_{r, n}(\bm a) \stackrel{D}  \rightarrow \mathcal{N}(0,1)$.   
\end{theorem}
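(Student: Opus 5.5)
We will treat $X_{r,n}(\bm a)=\sum_{\bm x} I_{\bm x}$ as a sum of indicators over $r$-APs $\bm x=(x,x+d,\ldots,x+(r-1)d)$ in $[n]$. Here $I_{\bm x}$ is the event that $\bm x$ is $\bm a$-colorful, so $\E I_{\bm x}=\pi:=\prod_{a\in S(\bm a)}p_a^{w_a}=n^{-\sum_a w_a\theta_a}$. The plan is to apply Stein's method with dependency neighborhoods. Two indicators $I_{\bm x},I_{\bm y}$ are independent unless $\bm x\cap\bm y\neq\emptyset$, so we can use the standard bound (e.g.\ Ross, Theorem 3.5) for a sum $W=\sum_i X_i$ of centered variables with dependency graph:
\[
\mathrm{Wass}(W/\sigma,\mathcal N(0,1))\lesssim \frac{D^2}{\sigma^3}\sum_i \E|X_i|^3+\frac{D^{3/2}}{\sigma^2}\Big(\sum_i\E X_i^4\Big)^{1/2}.
\]
However, the maximal degree $D$ here is of order $n$: the number of APs through a fixed point is $\asymp n$. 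This is too crude, because it ignores that overlaps through a rare color cost extra probability. I will therefore instead use the refined local-dependence bound, with $L_{\bm x}=\{\bm y:\bm y\cap\bm x\ne\emptyset\}$:
\[
\mathrm{Wass}\lesssim \frac{1}{\sigma^3}\sum_{\bm x}\sum_{\bm y,\bm z\in L_{\bm x}\cup\cdots}\E\big[|\bar I_{\bm x}\bar I_{\bm y}\bar I_{\bm z}|\big]+\frac{1}{\sigma^2}\sqrt{\Var\Big(\sum_{\bm x}\bar I_{\bm x}\sum_{\bm y\in L_{\bm x}}\bar I_{\bm y}\Big)}.
\]
This is the Barbour--Karoński--Ruciński style bound, where $\bar I=I-\pi$. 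Both terms then reduce to counting small connected configurations of overlapping APs weighted by their coloring probabilities.

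The first step is a variance lower bound, $\sigma^2\gtrsim n^2\pi+n^3\pi^2\,n^{\theta^*}$, for a suitable overlap gain; at minimum I want $\sigma^2\gtrsim n^2\pi$. Since $\Cov(I_{\bm x},I_{\bm y})\ge0$ always (it equals $\P(I_{\bm x}I_{\bm y})-\pi^2$, and it is zero or positive according as the APs are disjoint or share a compatible point), we get $\sigma^2\ge\sum_{\bm x}\pi(1-\pi)\asymp n^2\pi$. I will also need the upper bound on covariances. Two distinct APs sharing a single point $v$ of color $a$ have $\P(I_{\bm x}I_{\bm y})=\pi^2/p_a$, and there are $\asymp n^3$ such pairs, giving a contribution $n^3\pi^2 n^{\theta_a}$. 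Pairs sharing two or more points determine each other up to $O(1)$ choices after fixing $\bm x$, so they contribute $\lesssim n^2\pi$. Hence $\sigma^2\asymp n^2\pi+n^3\pi^2 n^{\theta_{\max}}$, where the maximum over colors sharing a point gives $\theta_{\max}$. Both terms diverge in the satisfiability region.

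Next comes the counting of triples and quadruples of pairwise-chained APs. For a connected configuration of $k$ APs (with $k=3$ for the third-moment term and $k=4$ for the variance term), the key combinatorial fact is this. Each additional AP attached to the current union through exactly one point adds at most $n$ choices and multiplies the probability by $\pi/p_a\le \pi n^{\theta_{\max}}$. An AP attached through two or more points adds $O(1)$ choices and probability at most $1$. I will encode a configuration by its attachment tree and bound each term by products of the two factors $A:=n^2\pi$ (for a fresh root) and $B:=n\pi n^{\theta_{\max}}$ (for a one-point attachment), together with $O(1)$ factors for multi-point attachments. Dividing by $\sigma^3\ge\max(A,\,A\sqrt{B}\cdot\ldots)^{3/2}$ and simplifying, every configuration should be bounded by either $A^{-1/2}=n^{-1+\frac12\sum w_a\theta_a}$ or $(n^{1-\theta_{\max}})^{-1/2}$. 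The second bound is the term coming from the regime where $B$ dominates, and there the ratio of the "star through one rare point" contribution $n^4\pi^3p_a^{-2}$ to $\sigma^3$ gives $n^{-(1-\theta_a)/2}$. The variance term is handled the same way, by expanding it into covariances of products $\bar I_{\bm x}\bar I_{\bm y}$ and $\bar I_{\bm x'}\bar I_{\bm y'}$, which are nonzero only when the four APs are connected.

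I expect the main obstacle to be this case analysis. It requires verifying uniformly, over all connected configurations (including those with multi-point overlaps and with repeated colors at shared points, where the probability is $\pi^k/\prod p_{\text{shared}}$), that the weighted count divided by $\sigma^3$ (respectively $\sigma^2$) is at most the stated maximum. I will carry this out by induction on the number of APs, adding them one at a time and checking that each step multiplies the bound by at most $\max(1,B)$ relative to the matching growth of $\sigma$. The final claim $Z_{r,n}(\bm a)\to\mathcal N(0,1)$ then follows, since both exponents in \eqref{eq:W} are negative when $\sum w_a\theta_a<2$ and $\theta_{\max}<1$.
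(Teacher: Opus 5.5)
Your route works and gives the same rate, but it uses a different Stein bound from the paper.

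\textbf{The paper's route.} The paper applies Theorem~\ref{thm:W}, $\mathrm{Wass}\lesssim RQ^2/\sigma^3$. Here $R=\sum_{\bm x}\E|Y_{\bm x}|\lesssim n^2\pi=A$ (with your $\pi$, $A$, and $B=n\pi n^{\theta_{\max}}$). The quantity $Q$ is an almost-sure bound on $\sum_{\bm x\in\overline N_{\mathcal G}(\bm x',\bm x'')}\E[|Y_{\bm x}|\mid Y_{\bm x'},Y_{\bm x''}]$. Bounding $Q$ only requires counting single APs against a fixed pair. There are $O(n)$ APs meeting $\bm x'\cup\bm x''$ in one point, each with conditional probability at most $\pi n^{\theta_{\max}}$, and $O(1)$ meeting it in two or more points. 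Hence $Q\lesssim 1+B$, and with $\sigma^2\asymp A(1+B)$ one gets $RQ^2/\sigma^3\asymp((1+B)/A)^{1/2}$, which is exactly the stated maximum.

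\textbf{Your route.} Your local-dependence bound instead needs two estimates: $\sum_{\bm x}\sum_{\bm y,\bm z\in L_{\bm x}}\E|\bar I_{\bm x}\bar I_{\bm y}\bar I_{\bm z}|\lesssim A(1+B)^2$, and $\Var(\sum_{\bm x}\bar I_{\bm x}\sum_{\bm y\in L_{\bm x}}\bar I_{\bm y})\lesssim A(1+B)^3$. Both do follow from your attachment-tree count. Each attachment costs a factor $\lesssim 1+B$, and all $\pi$-correction and product-of-means terms are absorbed since $n\pi\le B$. Then $A(1+B)^2/\sigma^3$ and $\sqrt{A(1+B)^3}/\sigma^2$ are both $\asymp((1+B)/A)^{1/2}$. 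So you reach the same bound, but only after enumerating three- and four-AP configurations and a covariance-of-products term, which the paper's conditional-expectation bound avoids. What your version buys in exchange is that it needs no almost-sure conditional control.

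\textbf{A false step.} Covariances are not always nonnegative. If $\bm x,\bm y$ share a point lying at positions $s,s'$ with $a_s\neq a_{s'}$, then $\P(I_{\bm x}I_{\bm y})=0$ and the covariance is $-\pi^2$; this already happens for the rainbow palette $(1,2,3)$. So your derivation of $\sigma^2\gtrsim n^2\pi$ does not stand as written, and neither does the formula $\P(I_{\bm x}I_{\bm y})=\pi^2/p_a$ for all one-point pairs.

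The repair is the paper's argument:
\begin{itemize}
\item There are $O_r(n^3)$ pairs sharing a point, so the negative covariances total $O_r(n^3\pi^2)=o(n^3\pi^2n^{\theta_{\max}})$, because $\theta_{\max}>0$.
\item The $\gtrsim_r n^3$ pairs meeting in a single point at the same position $s$, with $\theta_{a_s}=\theta_{\max}$, are always compatible and contribute $\gtrsim n^3\pi^2n^{\theta_{\max}}$.
\item Pairs sharing at least two points contribute $O(n^2\pi n^{-\theta_{\min}}+n^2\pi^2)=o(n^2\pi)$.
\end{itemize}
This restores $\sigma^2\asymp A(1+B)$. You genuinely need the lower bound $\sigma^2\gtrsim AB$ when $B\gg1$: with only $\sigma^2\gtrsim A$, your first term would be bounded by $(1+B)^2A^{-1/2}$, which need not tend to zero.

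Finally, your remark that both variance terms diverge is false in general. For $c=2$, $r=4$, $\theta_1=0.45$ one has $n^3\pi^2n^{\theta_{\max}}=n^{-0.15}$. This is harmless, since only $n^2\pi\to\infty$ and the two-sided variance estimate are used.
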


The proof of Theorem \ref{normal} is given in Section \ref{sec:normalpf}. It uses Stein's method  based on dependency graphs to bound the Wasserstein distance to the normal distribution. In particular, the theorem shows that, for any admissible $r$-palette $\bm a$, the number of $\bm a$-colorful $r$-APs in a random $\mu_{\bm p}$-coloring of $[n]$ is asymptotically normal for all parameter values in the satisfiability region. In other words, whenever the number of $\bm a$-colorful $r$-APs diverges in probability, the appropriately centered and normalized count satisfies a central limit theorem.

\subsection{  Distribution on the Threshold Surface  }
\label{sec:poisson}

In this section, we will analyze the asymptotic behavior of $X_{r, n}(\bm a)$ on the threshold surface.  We begin with the threshold hyperplane boundary, where $\sum_{a \in S(\bm a)} w_a \theta_a=2$ and $\theta_{\mathrm{max}}<1$. In this regime, $\E[X_{r, n}(\bm a)]$ is finite (see \eqref{eq:EXrn}) and the expected number of elements of each color diverges (since $\theta_{\mathrm{max}}<1$ implies, $n p_{a_s} = n^{1-\theta_{a_s}} \gg 1$, for all $s \in [r]$). The following result shows that in this case $X_{r, n}(\bm a)$ converges to a Poisson distribution, for any admissible $r$-palette $\bm a$. In fact, the proof provides an explicit rate of convergence in terms of the  Total Variation distance. The proof uses the Chen-Stein method for Poisson approximation (see Section \ref{sec:poissonpf}). 
Further, $\mathrm{TV}(\cdot,\cdot)$ will denote the Total Variation distance between two random variables, or equivalently between their laws, with a slight abuse of notation depending on the context.

\begin{theorem}
\label{thm:poisson}
Fix integers $c \geq 2$ and $r \geq 3$ and suppose $\bm a$ is an admissible $r$-palette. Suppose $\sum_{a \in S(\bm a)} w_a \theta_a=2$ and $\theta_{\mathrm{max}}<1$. Then $$\mathrm{TV}\left(X_{r, n}(\bm a), \mathrm{Pois}\left(\frac{1}{2(r-1)}\right) \right) \lesssim_r \max\{ n^{-1+\theta_{\max}}, n^{-\theta_{\min}} \}  \ll 1 , $$ 
where $\theta_{\min}:=\min_{a\in S(\bm a)}\theta_a > 0$.  In particular, $X_{r, n}(\bm a)\stackrel{D}  \rightarrow \ \mathrm{Pois}(\frac{1}{2(r-1)})$. 
 \end{theorem}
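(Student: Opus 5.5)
We prove Theorem \ref{thm:poisson} with the Chen--Stein method in its dependency-graph form (Arratia--Goldstein--Gordon). Index the $r$-APs in $[n]$ by $\bm x=(x,d)$ with $d\ge1$ and $x+(r-1)d\le n$. Let $I_{\bm x}$ be the indicator that $\bm x$ is $\bm a$-colorful, so that $X_{r,n}(\bm a)=\sum_{\bm x} I_{\bm x}$ and
\[
\E I_{\bm x}=\pi_n:=\prod_{s=1}^r p_{a_s}=n^{-\sum_{a\in S(\bm a)} w_a\theta_a}=n^{-2}.
\]
The number of $r$-APs is $\sum_{d\ge1}(n-(r-1)d)_+=\frac{n^2}{2(r-1)}+O(n)$. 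Hence $\lambda_n:=\E X_{r,n}(\bm a)=\frac{1}{2(r-1)}+O(n^{-1})$. The Poisson total variation distance between means $\lambda_n$ and $\lambda=\frac1{2(r-1)}$ is at most $|\lambda_n-\lambda|=O(n^{-1})$, which is dominated by the claimed rate. It therefore suffices to compare $X_{r,n}(\bm a)$ with $\mathrm{Pois}(\lambda_n)$.

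Declare $\bm x\sim\bm y$ when the APs share at least one element. Then $I_{\bm x}$ is independent of $\{I_{\bm y}:\bm y\not\sim\bm x\}$, and Chen--Stein gives
\[
\mathrm{TV}\bigl(X_{r,n}(\bm a),\mathrm{Pois}(\lambda_n)\bigr)\le \min(1,\lambda_n^{-1})(b_1+b_2),
\]
where $b_1=\sum_{\bm x}\sum_{\bm y\sim\bm x}\pi_n^2$ and $b_2=\sum_{\bm x}\sum_{\bm y\sim \bm x,\,\bm y\ne\bm x}\E[I_{\bm x}I_{\bm y}]$. The first term is easy: fixing a shared element and its positions in the two APs leaves $O_r(n)$ choices of $\bm y$ for each $\bm x$, so $b_1\lesssim_r n^2\cdot n\cdot n^{-4}=n^{-1}$.

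The main work is $b_2$. Split pairs $\bm x\ne\bm y$ according to the overlap $\bm x\cap\bm y$.

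\emph{Overlap exactly one element.} Say this element is position $s$ in $\bm x$ and position $t$ in $\bm y$. If $a_s\ne a_t$ the product is $0$. Otherwise $\E[I_{\bm x}I_{\bm y}]=\pi_n^2/p_{a_s}$. There are $O(n^3)$ such pairs (choose $\bm x$, then the difference of $\bm y$), so the contribution is $\lesssim n^3n^{-4}n^{\theta_{a_s}}\le n^{-1+\theta_{\max}}$.

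\emph{Overlap at least two elements.} Two elements already determine $\bm y$ up to $O_r(1)$ choices given $\bm x$, so there are $O(n^2)$ such pairs. Both APs lie in the progression of difference $\gcd(d,d')$, and their union has at least $r+1$ elements. The joint probability is at most $n^{-2}$ times the product of $p_{a}$ over the at least one element of $\bm y\setminus\bm x$, each factor being at most $n^{-\theta_{\min}}$. This gives a contribution $\lesssim n^2 n^{-2}n^{-2}n^{-\theta_{\min}}$, which is even smaller than needed.

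This step is the main obstacle, and the overlap count has to be done carefully. In particular, I will check that the bound needs only $\E[I_{\bm x}I_{\bm y}]\le \pi_n\prod_{z\in\bm y\setminus\bm x}p_{\phi(z)}$. This holds uniformly because the colors at shared positions must agree, otherwise the expectation vanishes. It also requires that $|\bm y\setminus\bm x|\ge1$ whenever $\bm y\ne\bm x$; this is true since the two APs have the same length $r$, so distinct APs cannot be nested.

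Combining the cases gives $b_1+b_2\lesssim_r n^{-1}+n^{-1+\theta_{\max}}+n^{-2-\theta_{\min}}\lesssim n^{-1+\theta_{\max}}$, and $\min(1,\lambda_n^{-1})=O(1)$. This yields the stated bound with room to spare; the $n^{-\theta_{\min}}$ term in the statement is a cruder bound that is also valid. Since $\theta_{\max}<1$ and $\theta_{\min}>0$, both rates tend to zero, and $X_{r,n}(\bm a)\stackrel{D}{\rightarrow}\mathrm{Pois}(\frac1{2(r-1)})$.
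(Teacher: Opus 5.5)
Your approach is the same as the paper's. You use Chen--Stein with the dependency graph ``share at least one element'', bound $b_1\lesssim_r n^{-1}$, split $b_2$ by overlap size, and absorb the $O(n^{-1})$ mean-shift term at the end. However, the overlap-$\ge 2$ computation contains an arithmetic slip, and it leads you to a false conclusion.

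There are $O_r(n^2)$ such ordered pairs, and each has $\E[I_{\bm x}I_{\bm y}]\le \pi_n n^{-\theta_{\min}}=n^{-2-\theta_{\min}}$. Their total contribution is therefore $\lesssim_r n^{2}\cdot n^{-2-\theta_{\min}}=n^{-\theta_{\min}}$, not $n^{-2-\theta_{\min}}$; you multiplied by $n^{-2}$ twice.

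This term is not an artifact of crude counting. Take the monochromatic palette with $c=2$ and $\theta_1=2/r$. The shifted pairs $\bm y=\bm x+d$ (that is, $(r+1)$-APs) are color-compatible, there are $\asymp_r n^2$ of them, and each has joint probability $p_1^{r+1}=n^{-2-2/r}$. So $b_2\gtrsim_r n^{-2/r}=n^{-\theta_{\min}}$, which exceeds $n^{-1+\theta_{\max}}=n^{-1+2/r}$ as soon as $r\ge 5$. Hence your claims that $b_1+b_2\lesssim n^{-1+\theta_{\max}}$ ``with room to spare'', and that the $n^{-\theta_{\min}}$ term in the statement is merely ``cruder'', are wrong. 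The $n^{-\theta_{\min}}$ term is genuinely needed in the Chen--Stein bound.

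With the corrected count you get $b_1+b_2\lesssim_r n^{-1}+n^{-1+\theta_{\max}}+n^{-\theta_{\min}}$. Together with the $O(n^{-1})$ mean correction, this is exactly the stated bound and exactly what the paper obtains.
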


Next, we consider the threshold intersection locus, where $\sum_{a \in S(\bm a)} w_a \theta_a=2$ and $\theta_{\mathrm{max}}=1$. In this case, $\E[X_{r, n}(\bm a)]$ remains finite (as in the previous case), but at least one color appears only $O_P(1)$ times in the sample. In fact, there is a unique color for which this happens. Formally, writing $\bm a=(a_1,a_2,\ldots,a_r)$, there is a unique index $\hat{s}\in[r]$ such that
\begin{align}\label{eq:location}
\theta_{a_{\hat{s}}}=\theta_{\mathrm{max}}=1. 
\end{align}  
This is because, if there were two distinct indices $s,s'\in[r]$ such that $\theta_{a_s}=\theta_{a_{s'}}=1$, then, since
$\sum_{a\in S(\bm a)}w_a\theta_a\leq 2$, we would have
\begin{equation*}
2
\geq
\sum_{a\in S(\bm a)}w_a\theta_a
=
\sum_{t=1}^r\theta_{a_t}
\geq
2+\sum_{t\in[r]\backslash\{s,s'\}}\theta_{a_t}
>2,
\end{equation*}
where the last inequality follows from $r\geq3$ and $\theta_{a_t}>0$ for every $t\in[r]$, which is a contradiction.  
We will refer to $\hat s$ as the {\it rare coordinate} and $a_{\hat s}$ the {\it rare color}. Also, define the function $\lambda_{\bm a}: [0, 1] \rightarrow \R_{\geq 0}$ as: 
\begin{equation}
\label{eq:lambda}
    \lambda_{\bm a}(x) \coloneq \min\left\{ \frac{x}{\hat{s}-1},\frac{1-x}{r- \hat{s}}\right\}  .  
\end{equation} 
(When one of the terms in the minimum is undefined, $\lambda_{\bm a}(x)$ is defined to be the other term.) Note that, if $U\sim\mathrm{Unif}([0,1])$, then $\lambda_{\bm a}(U) \sim \mathrm{Unif}([0,\frac{1}{r-1}])$.\footnote{To see this, note that for
$0\leq y\leq \frac{1}{r-1}$, $\lambda_{\bm a}(x)>y$ if and only if $(\hat s-1)y<x<1-(r-\hat s)y$.
Hence, $\P(\lambda_{\bm a}(U) >y) = 1-(r-1)y$, for $0\leq y\leq\frac{1}{r-1}$, as required.}  With this definition, we can now state the result about the asymptotic distribution of  $X_{r, n}(\bm a)$ for parameter values on the threshold intersection locus.

\begin{theorem}  
\label{thm:intersectiondistribution}  
Fix integers $c \geq 2$ and $r \geq 3$ and suppose $\bm a$ is an admissible $r$-palette, such that $\sum_{a \in S(\bm a)} w_a \theta_a=2$ and $\theta_{\mathrm{max}}=1$. Then,  
    \begin{equation}\label{eq:intersectiondistribution}
        X_{r, n}(\bm a)  \xrightarrow{D} \sum_{x \in \cP}  N_x  ,  
    \end{equation}
    where $\mathcal P$ is a Poisson point process on $[0,1]$ with rate $1$, and conditional on $\mathcal P$, $\{N_x\}_{x\in\mathcal P}$ are independent with $ N_x\sim \mathrm{Pois}(\lambda_{\bm a}(x)).$  
\end{theorem}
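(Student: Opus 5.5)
Our plan is to condition on the locations of the rare color and reduce the problem to a Poisson approximation given those locations. Let $R_n=\{i\in[n]:\phi_n(i)=a_{\hat s}\}$. Since $p_{a_{\hat s}}=n^{-1}$, the rescaled point set $\frac1n R_n$ converges to a rate-one Poisson point process $\mathcal P$ on $[0,1]$; this follows from the standard binomial-to-Poisson convergence of the counts $|R_n\cap(nI_j)|$ over disjoint intervals. Given $R_n$, write $X_{r,n}(\bm a)=\sum_{y\in R_n}X_y$. Here $X_y$ counts $\bm a$-colorful $r$-APs whose $\hat s$-th term equals $y$ and whose other terms avoid $R_n$. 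The event that some AP uses two points of $R_n$ has vanishing probability, because $\E|R_n|^2=O(1)$ and a fixed pair of positions lies in $O(1)$ APs, each of which needs $r-2$ further colored points.

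The key computation is the conditional mean. For fixed $y$, the number of APs with $\hat s$-th term $y$ equals the number of $d\ge1$ with $y-(\hat s-1)d\ge1$ and $y+(r-\hat s)d\le n$, which is $n\lambda_{\bm a}(y/n)+O(1)$. By Remark \ref{remark:polynomial} the weights satisfy $\sum_{t\ne\hat s}\theta_{a_t}=2-1=1$, so each such AP is colorful off $y$ with probability $\prod_{t\ne\hat s}p_{a_t}=n^{-1}$. Hence $\E[X_y\mid R_n]=\lambda_{\bm a}(y/n)+o(1)$ uniformly, since conditioning on $R_n$ changes the other colors' probabilities only by a factor $1+O(1/n)$.

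Conditionally on $R_n$, the remaining colors are i.i.d. on $[n]\setminus R_n$. We will apply the multivariate Chen--Stein method (Arratia--Goldstein--Gordon) to the vector $(X_y)_{y\in R_n}$ of sums of indicators $I_{\bm x}$ over APs $\bm x$. This should give, on the event $|R_n|\le K$,
$$\mathrm{TV}\Big(\mathcal L\big((X_y)_{y\in R_n}\mid R_n\big),\ \bigotimes_{y\in R_n}\mathrm{Pois}(\lambda_{\bm a}(y/n))\Big)\to0 .$$
The error terms $b_1,b_2$ involve pairs of APs that share a non-rare term, and must be bounded with both APs' rare terms in $R_n$. Two APs through fixed points $y,y'$ sharing one further term $z$ are determined by $z$ (given positions), so there are $O(n)$ such pairs. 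They need $2(r-1)-1$ colored non-rare points, with probability at most $n^{-2+\theta_{\max}'}$, where $\theta'_{\max}=\max_{t\ne\hat s}\theta_{a_t}<1$; indeed the other exponents sum to $1$ and $r\ge3$ forces each to be $<1$. This yields $O(n^{-1+\theta'_{\max}})\to0$. Pairs sharing two or more non-rare terms coincide up to $O(1)$ choices and are handled similarly, and $b_3=0$ by independence.

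Finally, for a bounded continuous test function $g$, $\E[g(X_{r,n})\mid R_n]$ is close to $\E[g(\sum_{y}N_{y})]$, where the $N_y\sim\mathrm{Pois}(\lambda_{\bm a}(y/n))$ are independent. This quantity is a continuous functional of the point configuration $\frac1nR_n$, because $\lambda_{\bm a}$ is continuous and the number of points is tight. Combining this with the convergence of $\frac1nR_n$ to $\mathcal P$ in distribution, in the vague topology on $[0,1]$, and then truncating on $|R_n|\le K$ gives \eqref{eq:intersectiondistribution}.

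The main obstacle is making the Chen--Stein bounds uniform over configurations of $R_n$ and handling the boundary effects. Near $y/n\in\{0,1\}$ the function $\lambda_{\bm a}$ vanishes, and the uniform $O(1)$ error in counting $d$ must be controlled. We will handle this by restricting to the high-probability event that $R_n$ has at most $K$ points, all pairwise separated by more than $\epsilon n$.
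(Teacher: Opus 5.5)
Your proof is correct, but its central step follows a different route from the paper's.

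\textbf{Shared outline.} Both arguments condition on the rare-colour set ($\cS_n$ in the paper, your $R_n$). Both write $X_{r,n}(\bm a)$ exactly as a sum of counts pinned at the points of that set.

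\textbf{Central step.}
\begin{itemize}
\item[(i)] You apply the process-level multivariate Chen--Stein bound of Arratia--Goldstein--Gordon. The underlying randomness is the conditionally i.i.d.\ colours on $[n]\setminus R_n$, and two progressions are neighbours when their non-rare sites overlap.
\item[(ii)] The paper instead computes all conditional mixed falling factorial moments of the pinned counts (Lemma \ref{lm:s}). This needs an intersection-graph and spanning-forest count over arbitrary $R$-tuples of progressions. It then turns moment convergence into conditional TV convergence through a compactness-based method-of-moments lemma (Lemma \ref{facMomentTVbd}).
\end{itemize}
Your route needs only pairwise overlap estimates. It also gives an explicit conditional rate, of order $n^{-1+\theta'_{\max}}+n^{-\theta_\bullet}$ uniformly on $\{|R_n|\le K\}$, where $\theta_\bullet=\min_{t\neq\hat s}\theta_{a_t}$. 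The paper's moment lemma is purely qualitative. In exchange, the paper's route is self-contained and does not rely on citing the multivariate Chen--Stein theorem.

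\textbf{Final step.} This also differs, mildly.
\begin{itemize}
\item[(i)] You use that, given $R_n$, the approximating law of the total is $\mathrm{Pois}\bigl(\sum_{y\in R_n}\lambda_{\bm a}(y/n)\bigr)$. This is a bounded continuous functional of $\frac1nR_n$, so you only need convergence of the unmarked process $\frac1nR_n\to\cP$.
\item[(ii)] The paper proves Laplace-functional convergence of the marked measure $\sum_{i\in\cS_n}Y_i\delta_{i/n}$ (Lemma \ref{lem:comPoisson}).
\end{itemize}
Your version is a little more economical, and it uses exactly the identity $Y\stackrel{D}{=}\mathrm{Pois}(Z)$ from Remark \ref{remark:cubedistribution}.

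\textbf{Small corrections.}
\begin{itemize}
\item[(a)] The identity $\sum_{t\neq\hat s}\theta_{a_t}=1$ follows directly from the hypotheses $\sum_a w_a\theta_a=2$ and $\theta_{a_{\hat s}}=1$, not from Remark \ref{remark:polynomial}.
\item[(b)] The step discarding progressions that use two points of $R_n$ is unnecessary. Because $\hat s$ is unique, the rare colour occurs exactly once in $\bm a$. Hence the non-rare terms of any $\bm a$-colorful progression automatically avoid $R_n$, and the decomposition $X_{r,n}(\bm a)=\sum_{y\in R_n}X_y$ is exact.
\item[(c)] The separation truncation in your last paragraph is not needed. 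It also would not address proximity to $0$ or $1$, since separation between points says nothing about the boundary. Your Chen--Stein bounds are already uniform over all configurations with $|R_n|\le K$. The boundary enters only through the $O\bigl((1+K)/n\bigr)$ error in the conditional means, and $\lambda_{\bm a}$ vanishing there causes no difficulty.
\end{itemize}
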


The proof of Theorem \ref{thm:intersectiondistribution} is given in Section \ref{sec:boundarypoissonpf}. First, we identify the random set of indices $\cS_n\subset[n]$ at which the rare color $a_{\hat s}$ appears.
Conditional on $\cS_n=B$, we then show, by a mixed factorial moment calculation, that the joint distribution of the numbers of $\bm a$-colorful $r$-APs whose $\hat s$-th coordinate is pinned at the
locations in $B$ can be approximated by independent Poisson random variables with means $\{ \lambda_{\bm a}(\frac{i}{n})\}_{i\in B}$. Combining this with the fact that $\cS_n$, after an appropriate rescaling, converges to a Poisson point process of rate $1$, gives the marked Poisson point process appearing in Theorem \ref{thm:intersectiondistribution}. The limiting distribution can also be expressed as a compound Poisson distribution, that is, as a Poisson sum of independent random variables, as explained in the following remark.

\begin{remark}
\label{remark:intersectiondistribution}  
For a Poisson process $\cP$ on $[0,1]$ with rate $1$,  the number of points $|\cP|\sim\mathrm{Pois}(1)$ and conditional on $|\cP|=N$, the points of $\cP$ are distributed as independent $\mathrm{Unif}([0,1])$ random
variables $U_1,\ldots,U_N$. Hence, the limiting distribution \eqref{eq:intersectiondistribution} can be expressed as:  
$$Y := \sum_{x \in \cP}  N_x \stackrel{D}{=} \sum_{i=1}^{N} K_i,$$
where $N \sim\mathrm{Pois}(1)$ and $\{K_i\}_{i \geq1}$ are i.i.d. mixed Poisson random variables  (independent of $N$), defined as follows: $$K_i  \sim  \mathrm{Pois}\bigl(\lambda_{\bm a}(U_i)\bigr) \stackrel{D} = \mathrm{Pois} \left ( \mathrm{Unif}\left[0,\frac{1}{r-1}\right] \right )  , $$ 
where $U_i \sim \mathrm{Unif}([0,1])$.\footnote{For a non-negative real-valued random variable $Z$ with law $\cL(Z)$, we denote by $\mathrm{Pois}(\cL(Z))$ (or, interchangeably, $\mathrm{Pois}(Z)$) the law of the random variable $X$ generated as follows: First sample $\Lambda \sim \cL(Z)$, and then, conditional on $\Lambda$, sample $X \sim \mathrm{Pois}(\Lambda)$.} Therefore, the limiting distribution in Theorem \ref{thm:intersectiondistribution} is a
compound Poisson distribution with rate $1$ and mixed Poisson jump
distribution.  
\end{remark}

Next, we consider the threshold cube boundary, where $\sum_{a \in S(\bm a)} w_a \theta_a < 2$ and $\theta_{\mathrm{max}}=1$. Under this assumption, there is again a unique coordinate $\hat s\in[r]$ satisfying $\theta_{a_{\widehat s}}=1$. We define $\lambda_{\boldsymbol a}$ by \eqref{eq:lambda} using this coordinate.
Also, in this case, $\E[X_{r, n}(\bm a)]$ diverges (unlike the previous 2 cases), but one color appears only $O_P(1)$ times in the sample. Hence, in this regime, $X_{r,n}(\bm a)$ scaled by the order of this expectation has a limiting distribution, as shown below.

\begin{theorem}
\label{thm:cubedistribution}
Fix integers $c\geq2$ and $r\geq3$ and suppose $\bm a$ is an admissible
$r$-palette such that $\sum_{a\in S(\bm a)}w_a\theta_a<2$ and $\theta_{\mathrm{max}}=1$. Then
\begin{align}
\label{eq:poissonmark}
\frac{X_{r,n}(\bm a)}
{n^{2-\sum_{a\in S(\bm a)}w_a\theta_a}}
\xrightarrow{D}
\sum_{x\in\cP}\lambda_{\bm a}(x),
\end{align} 
where $\cP$ is a Poisson point process of rate $1$ on $[0,1]$.
\end{theorem}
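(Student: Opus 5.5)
We condition on the random set $\cS_n\subset[n]$ of sites carrying the rare color $a_{\hat s}$. Since $p_{a_{\hat s}}=n^{-1}$, the rescaled point process $\frac1n\cS_n=\{i/n: i\in\cS_n\}$ converges in distribution to a rate-$1$ Poisson point process $\cP$ on $[0,1]$. This is the standard binomial-to-Poisson process limit, checked for instance via Laplace functionals: $\E\exp(-\sum_{i\in\cS_n}f(i/n))=\prod_i(1-n^{-1}(1-e^{-f(i/n)}))\to\exp(-\int_0^1(1-e^{-f}))$. In particular $|\cS_n|$ is tight.

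Write $X_{r,n}(\bm a)=\sum_{i\in\cS_n}Y_i$, where $Y_i$ counts the $\bm a$-colorful $r$-APs whose $\hat s$-th term equals $i$. Here we use that any such AP has its $\hat s$-th term in $\cS_n$; any other term also carrying the color $a_{\hat s}$ is impossible, since that color occurs exactly once in the palette. Fix $i$, and let the remaining coordinates $x+(s-1)d$, $s\neq\hat s$, be determined by $d$. For this AP to lie in $[n]$ we need $d\le \min\{(i-1)/(\hat s-1),(n-i)/(r-\hat s)\}$, so the number of admissible $d$ is $n\lambda_{\bm a}(i/n)+O(1)$. For each such $d$, the probability that the other $r-1$ coordinates have the prescribed colors is $\prod_{s\ne\hat s}p_{a_s}=n^{-(\sum_a w_a\theta_a-1)}$. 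This is conditional on $\cS_n$, and conditioning on those coordinates not being in $\cS_n$ changes this probability only by a factor $1+o(1)$. Hence $\E[Y_i\mid\cS_n]=(1+o(1))\,\lambda_{\bm a}(i/n)\,m_n$ with $m_n:=n^{2-\sum_a w_a\theta_a}\to\infty$, uniformly in $i$ on the event $\{|\cS_n|\le K\}$.

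The main step is a conditional concentration bound: $\Var(Y_i\mid\cS_n)=o(m_n^2)$ on $\{|\cS_n|\le K\}$. Given $\cS_n$, $Y_i$ is a sum of indicators over $d$. Two indicators with differences $d\neq d'$ are dependent only if the APs share a non-pinned site, i.e. $i+(s-\hat s)d=i+(s'-\hat s)d'$ for some $s,s'\ne\hat s$. For each $d$ this leaves $O_r(1)$ choices of $d'$, and the shared site removes at least a factor $p_{a_s}^{-1}\le n^{\theta_{\max}'}$, where $\theta'_{\max}<1$ is the largest exponent among the non-rare colors (strictly less than $1$ by uniqueness of $\hat s$). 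So the covariance sum is at most
\[
\sum_{d}\sum_{d'}\P(\text{both})\lesssim_r n\cdot n^{-(\sum w_a\theta_a-1)}\cdot\max_{s\ne\hat s}p_{a_s}^{-1}\cdot n^{-(\sum w_a\theta_a-1)}.
\]
Alternatively, the covariance can be bounded by $\E[\text{one}]\cdot n^{-(\sum w_a\theta_a -1)+\theta_{a_s}}$. Since $\sum_{t\ne \hat s}\theta_{a_t}-\theta_{a_s}>0$ when $r\ge3$, this gives $o(m_n^2)$ together with the diagonal term $O(m_n)=o(m_n^2)$. Carrying out this bookkeeping cleanly, including the mild correlations introduced by conditioning on $\cS_n$ (which only excludes $O(1)$ sites), is where the most care is needed.

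With this bound, Chebyshev applied conditionally gives $\max_{i\in\cS_n}|Y_i/m_n-\lambda_{\bm a}(i/n)|\to0$ in probability on $\{|\cS_n|\le K\}$. Therefore $X_{r,n}(\bm a)/m_n=\sum_{i\in\cS_n}\lambda_{\bm a}(i/n)+o_P(1)$. Since $\lambda_{\bm a}$ is continuous and bounded on $[0,1]$, the continuous mapping theorem applied to $\frac1n\cS_n\Rightarrow\cP$ yields $\sum_{i\in\cS_n}\lambda_{\bm a}(i/n)\xrightarrow{D}\sum_{x\in\cP}\lambda_{\bm a}(x)$. Letting $K\to\infty$ via tightness of $|\cS_n|$, Slutsky's lemma gives \eqref{eq:poissonmark}.
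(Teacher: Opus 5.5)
Your proposal is correct and takes essentially the same route as the paper. You write $X_{r,n}(\bm a)$ as a sum of pinned counts over the rare-color sites $\cS_n$, use conditional first and second moments with Chebyshev to show each pinned count divided by $m_n=n^{2-\sum_a w_a\theta_a}$ is close to $\lambda_{\bm a}(i/n)$, and finish with tightness of $|\cS_n|$, the Poisson process limit of $\frac1n\cS_n$, and the continuous mapping theorem.

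Two small fixes are needed. First, state the conditional mean with an additive error $O(m_n/n)$ rather than a uniform factor $1+o(1)$, since the multiplicative form fails near the zeros of $\lambda_{\bm a}$. Second, your displayed covariance estimate misses pairs sharing more than one non-pinned site, and the shared-site bookkeeping is unnecessary anyway: bounding each covariance crudely by $\P(\text{one})\lesssim n^{1-\sum_a w_a\theta_a}$, with $O_r(1)$ dependent partners per $d$, gives $\mathrm{Var}\lesssim m_n=o(m_n^2)$ as in the paper and covers all such pairs.
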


The proof of Theorem \ref{thm:cubedistribution} is given in Section \ref{sec:cubedistributionpf}. The key step in the proof is to show that, conditional on $\cS_n$ (the set of indices at which the rare color
appears), the number of $\bm a$-colorful $r$-APs whose $\hat s$-th coordinate is pinned at each location in $\cS_n$ concentrates around its expectation.  Then, since $\cS_n$, after an appropriate normalization, converges to a Poisson process of rate 1, we can conclude that $X_{r,n}(\bm a)$, scaled by the order of its expectation, converges to the limit in \eqref{eq:poissonmark}.

\begin{remark} 
\label{remark:cubedistribution}  
Note that the limiting distribution in \eqref{eq:poissonmark} can be equivalently expressed as 
$$Z := \sum_{x\in\cP}\lambda_{\bm a}(x) \stackrel{D}{=} \sum_{i=1}^{N} J_i, $$
where $N\sim\mathrm{Pois}(1)$ and $\{J_i\}_{i \geq 1}$ is a collection of i.i.d. $\mathrm{Unif}([0, \frac{1}{r-1}])$ random variables (independent of $N$). 
Thus, the limiting distribution in \eqref{eq:poissonmark} is a compound Poisson distribution with rate
$1$ and jump distribution $\mathrm{Unif}([0, \frac{1}{r-1}])$. 
The distribution of  $Z$ is also related to the distribution of $Y$ in Remark \ref{remark:intersectiondistribution} as follows: $$Y \stackrel{D} = \mathrm{Pois}\left( \sum_{x\in\cP}\lambda_{\bm a}(x)
\right) \stackrel{D}= \mathrm{Pois}(Z), $$
since conditional on $\cP$, the random variables $\{N_x\}_{x\in\cP}$ are independent Poisson random variables. Thus, the limiting distribution in Theorem \ref{thm:intersectiondistribution} can
equivalently be viewed as a mixed Poisson distribution whose mixing
variable is the compound Poisson limit appearing in Theorem
\ref{thm:cubedistribution}.  
\end{remark}

To illustrate the above results, we now return to the examples in Section \ref{sec:thres}.

\begin{example}[Example \ref{example:subset} continued]  
In this case, $c=2$ and the palette consists of a single color $\bm a=(1,1,\ldots,1)$, for $r \geq 3$. Here, the threshold boundary consists of the single point $\theta_1=\frac{2}{r}$, and by Theorem \ref{thm:poisson}, 
$$X_{r,n}(\bm a) \xrightarrow{D} \mathrm{Pois}\left(\frac{1}{2(r-1)}\right).$$ This recovers the classical Poisson behavior for the number of $r$-term arithmetic progressions in a binomial random subset at the threshold (see, for example, \cite[Theorem 1]{barhoumi2019bivariate}).  
\end{example}

\begin{example}[Example \ref{example:bichromatic} continued]  
In this case, $r=4$, $c=3$, and the palette is $\bm a=(1,1,1,2)$. Here, the threshold boundary is the polyline $(Q_1,Q_2,Q_3)$, as shown in Figure \ref{fig:diagram} (a). Depending on the location of the parameters on this polyline we have the following cases: 

\begin{itemize}  

    \item \textit{Threshold hyperplane boundary}: This is the segment $(Q_2, Q_3)$, where 
    $3\theta_1+\theta_2=2$ and $\frac13<\theta_1<\frac23$.  Then $\theta_{\max}<1$, and Theorem \ref{thm:poisson} gives 
    $$X_{4,n}(\bm a) \xrightarrow{D} \mathrm{Pois}\left(\frac16\right).$$

    \item \textit{Threshold cube boundary}: This is the segment $(Q_1, Q_2)$, where $0<\theta_1< \frac{1}{3}$ and $\theta_2=1$. Note that the rare color is color $2$, which occurs at the fourth coordinate
    of the palette. Hence, $\hat s=4$ and $\lambda_{\bm a}(x)=\frac{x}{3}$. 
    By Theorem \ref{thm:cubedistribution} and Remark \ref{remark:cubedistribution},
    $$\frac{X_{4,n}(\bm a)}{n^{1-3\theta_1}} \xrightarrow{D} \sum_{x\in\cP}\frac{x}{3} \stackrel{D} =  
    \sum_{i=1}^{N}J_i,$$    
    where $\cP$ is a Poisson process of rate 1 on $[0, 1]$, $N\sim\mathrm{Pois}(1)$, and, independently of $N$, $J_1,J_2,\ldots$ are i.i.d.  $\mathrm{Unif}([0, \frac{1}{3}])$. 

    \item \textit{\it Threshold intersection locus}: This is the point $Q_2=(\frac{1}{3},1)$, where $3\theta_1+\theta_2=2$ and $\theta_{\max}=1$. Hence, by Theorem \ref{thm:intersectiondistribution} and Remark \ref{remark:intersectiondistribution},   
    $$X_{4,n}(\bm a) \xrightarrow{D} \sum_{x\in\cP}N_x \stackrel{D}{=}\sum_{i=1}^{N} K_i,$$
    where 
    \begin{itemize} 
    \item $\cP$ is a Poisson process of rate 1 on $[0, 1]$ and, conditional on $\cP$, $\{N_x\}_{x\in \cP}$ are independent with $N_x \sim \mathrm{Pois}(\frac{x}{3})$; 
    
    \item $N \sim\mathrm{Pois}(1)$ and, independently, $\{K_i\}_{i \geq1}$ are i.i.d.\ mixed Poisson random variables with $\mathrm{Pois}(\mathrm{Unif}([0,\frac{1}{3}]))$.  
    \end{itemize} 
\end{itemize}

\end{example}

\begin{example}[Example \ref{example:differentcolors} continued]  
In this case, $r=3$, $c=4$, and the palette is $\bm a=(1,2,3)$. The threshold surface is shown in Figure \ref{fig:diagram} (b). The asymptotic distribution of $X_{r, n}(\bm a)$ on the different components of the threshold surface can be obtained as follows:

\begin{itemize} 

    \item \textit{Threshold hyperplane boundary}: This is the green region in Figure \ref{fig:diagram} (b). In this case, Theorem \ref{thm:poisson} gives  
    $$X_{3,n}(\bm a) \xrightarrow{D} \mathrm{Pois}\left(\frac{1}{4}\right).$$

    \item \textit{Threshold cube boundary}: This is the region shown in orange in Figure \ref{fig:diagram} (b). In this case, depending on the location of the rare color we have, 
\begin{align}\label{eq:lambdadifferentcolors}
\lambda_{\bm a}(x) =
    \begin{cases}
        \dfrac{1-x}{2}, & \text{ if } \theta_1=1,\\ 
        \min\{x,1-x\}, & \text{ if } \theta_2=1,\\
        \dfrac{x}{2}, & \text{ if } \theta_3=1 .
    \end{cases}
\end{align}
    Hence, by Theorem \ref{thm:cubedistribution} and Remark \ref{remark:cubedistribution},
    $$\frac{X_{3,n}(\bm a)}{n^{2-(\theta_1+\theta_2+\theta_3)}} \xrightarrow{D}
    \sum_{x\in\cP}\lambda_{\bm a}(x) \stackrel{D} = \sum_{i=1}^{N} J_i,$$
    where $\cP$ is a Poisson process of rate 1 on $[0, 1]$, $\lambda_{\bm a}(x)$ as defined in \eqref{eq:lambdadifferentcolors}, $N\sim\mathrm{Pois}(1)$ and, independently of $N$, $\{J_i\}_{i \geq 1}$ are i.i.d. $\mathrm{Unif}([0,\frac{1}{2}])$.

    \item \textit{Threshold intersection locus}: This is the red triangle shown in Figure \ref{fig:diagram} (b). In this case, by Theorem \ref{thm:intersectiondistribution} and Remark \ref{remark:intersectiondistribution}, 
    $$X_{3,n}(\bm a) \xrightarrow{D}  \sum_{x\in\cP}N_x \stackrel{D}{=}\sum_{i=1}^{N} K_i , $$
 where 
 \begin{itemize}

   \item $\cP$ is a Poisson process of rate 1 on $[0, 1]$ and, conditional on $\cP$, $\{N_x\}_{x\in \cP}$ are independent with $N_x \sim \mathrm{Pois}(\lambda_{\bm a}(x))$, for $\lambda_{\bm a}(x)$ as in \eqref{eq:lambdadifferentcolors};  

\item $N \sim\mathrm{Pois}(1)$ and, independently of $N$, $\{K_i\}_{i \geq1}$ are i.i.d.\ mixed Poisson random variables with $K_i \sim\mathrm{Pois}(\mathrm{Unif} ([0,\frac12]))$.
\end{itemize}
\end{itemize}  
\end{example}

\begin{remark} 
An interesting future direction is to obtain quantitative rates of convergence for the compound Poisson limits in  Theorems \ref{thm:intersectiondistribution} and \ref{thm:cubedistribution} (similar to those in Theorems \ref{thm:Z} and \ref{thm:poisson}). Techniques from Stein's method for compound Poisson and Poisson process approximation may be useful for this purpose (see, for example, \cite{BarbourChenLoh1992,ChenRollin2013} and the references therein).   
\end{remark}

\section{Proof of Theorem \ref{thm:region}} 
\label{sec:thresholdpf}

Recall that an $r$-AP in $[n]$ is an ordered $r$-tuple $\bm x = (x, x+d,\ldots, x+(r-1)d)$ whose entries lie in $[n]$, where $x \in [n]$ and $d \geq 1$. Denote by $\cX_{r, n}$ the set of $r$-APs in $[n]$. For each fixed $x\in [n]$, every integer $d$ satisfying $1\leq d\leq \frac{n-x}{r-1}$ generates an $r$-AP in $[n]$. Hence,
    \begin{align}\label{eq:Xrn}
       |\cX_{r, n}|
       =\sum_{x=1}^n \left\lfloor\frac{n-x}{r-1}\right\rfloor
       = \sum_{x=1}^n \frac{n-x}{r-1}+O(n)
       = \frac{n^2}{2(r-1)}+O(n).
   \end{align}
Further, for a $\mu_{\bm p}$-random $c$-coloring of $[n]$ and an admissible $r$-palette $\bm a$, the number of $\bm a$-colorful $r$-APs in $[n]$ can be expressed as
\begin{align}\label{eq:Xrncolor}
X_{r,n}(\bm a)
=
\sum_{\bm x \in \cX_{r, n}}
\bm 1 \{\bm x \text{ is } \bm a\text{-colorful} \}.
\end{align}
To begin with, we compute the expectation and the variance of $X_{r, n}(\bm a)$.

\begin{lemma} 
Fix integers $c \geq 2$ and $r \geq 3$ and suppose $\bm a$ is an admissible $r$-palette. Then 
    \begin{align}
    \mathbb{E}[X_{r, n}(\bm a)] \sim \frac{1}{2(r-1)} n^{2-\sum_{ a \in S(\bm a) } w_a \theta_a  } .    
    \label{eq:EXrn}
    \end{align}  
    Further, 
   \begin{align}
\mathrm{Var}[X_{r, n}(\bm a)] \asymp n^{2 - \sum_{ a \in S(\bm a) } w_a \theta_a } + n^{3 - 2\sum_{ a \in S(\bm a) } w_a \theta_a + \theta_{\mathrm{max}} }  .  
  \label{eq:varianceXrn} 
\end{align}    
    \label{lm:expectationvariance} 
\end{lemma}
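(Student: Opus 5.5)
Write $q:=\prod_{s=1}^r p_{a_s}=n^{-\sum_{a\in S(\bm a)} w_a\theta_a}$, which is the probability that a fixed $r$-AP is $\bm a$-colorful; this holds because its $r$ entries are distinct and colored independently. By \eqref{eq:Xrncolor} and \eqref{eq:Xrn},
\[
\E[X_{r,n}(\bm a)]=|\cX_{r,n}|\,q=\Bigl(\tfrac{n^2}{2(r-1)}+O(n)\Bigr)q ,
\]
which gives \eqref{eq:EXrn} immediately.

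For the variance, I will expand
\[
\mathrm{Var}[X_{r,n}(\bm a)]=\sum_{\bm x,\bm y\in\cX_{r,n}}\mathrm{Cov}(I_{\bm x},I_{\bm y}),
\]
where $I_{\bm x}$ is the indicator that $\bm x$ is $\bm a$-colorful. If $\bm x$ and $\bm y$ share no element, the covariance is $0$. I classify the remaining pairs by $k=|\bm x\cap\bm y|\in\{1,\dots,r\}$. Two distinct APs sharing two elements are determined up to $O_r(1)$ choices once one of them and the two shared positions are fixed. So the number of pairs with $k\ge2$, including $\bm x=\bm y$, is $O(n^2)$. The number of pairs with $k=1$ is $\asymp n^3$: pick $\bm x$ in $\asymp n^2$ ways, a shared element and position, then $\bm y$ through that element in $\asymp n$ ways.

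The upper bound comes from $\mathrm{Cov}\le\E[I_{\bm x}I_{\bm y}]$. For $k=1$, if the shared element has conflicting colors the product is $0$. Otherwise $\E[I_{\bm x}I_{\bm y}]=q^2/p_a$, where $a$ is the common color, and this is at most $q^2 n^{\theta_{\max}}$. So these pairs contribute $O(n^{3}q^2n^{\theta_{\max}})$. For $k\ge2$, $\E[I_{\bm x}I_{\bm y}]\le \P(I_{\bm x}=1)=q$, so these pairs contribute $O(n^2q)$. This gives the upper bound in \eqref{eq:varianceXrn}.

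The lower bound is the main obstacle, since cross terms could in principle cancel. All covariances here are nonnegative, because when they are nonzero they equal $\E[I_{\bm x}I_{\bm y}]-q^2\ge -q^2$, and that negative part must be controlled. Instead I would simply exhibit enough positive mass. For the diagonal, $\sum_{\bm x}\Var(I_{\bm x})=|\cX_{r,n}|q(1-q)\asymp n^2q$, using $q\to0$. For the $k=1$ term, let $\hat s$ be a coordinate with $\theta_{a_{\hat s}}=\theta_{\max}$. I count pairs $(\bm x,\bm y)$ whose only common element is the $\hat s$-th entry of both: fix $z\in[n/3,2n/3]$, and choose differences $d,d'$ of order $n$ with $d\ne d'$, avoiding the $O(1)$ ratios that create a second intersection. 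This gives $\gtrsim n^3$ such pairs, each with covariance $q^2/p_{a_{\hat s}}-q^2\ge \tfrac12 q^2n^{\theta_{\max}}$.

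The other pairs must not cancel this. Pairs with $k\ge1$ and conflicting colors have covariance $-q^2$, and there are $O(n^3)$ of them, contributing $-O(n^3q^2)$. Since $\theta_{\max}>0$, this is $o(n^3q^2n^{\theta_{\max}})$. Every other nonzero covariance is nonnegative. Summing gives $\Var\gtrsim n^2q+n^{3}q^2n^{\theta_{\max}}$, which is \eqref{eq:varianceXrn}. Some care is needed with disjointness when $d$ and $d'$ are comparable, but restricting to $d'\in[d+1,\,(1+\epsilon)d]$-type windows, or using that coincidences force rational relations with bounded numerator and denominator, handles this.
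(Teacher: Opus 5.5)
Your proposal is correct and follows essentially the paper's route: the same decomposition by overlap size, the same lower-bound family of one-point-overlap pairs pinned at a coordinate with $\theta_{a_s}=\theta_{\max}$, and the same observation that incompatible pairs contribute only $-O(n^{3}q^2)=o(n^{3+\theta_{\max}}q^2)$. The one difference is in the off-diagonal pairs with $k\ge 2$: you bound them above crudely by $\E[I_{\bm x}I_{\bm y}]\le q$ and below by nonnegativity of compatible covariances, whereas the paper shows their total absolute covariance is $O(n^{2}q\,n^{-\theta_{\min}})$.

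Your sentence ``All covariances here are nonnegative'' is a slip. Only compatible overlapping pairs have nonnegative covariance, since then $\E[I_{\bm x}I_{\bm y}]=q^2\prod_{z\in\bm x\cap\bm y}p_{b(z)}^{-1}\ge q^2$, with $b(z)$ the common required color at $z$; this fact is also what justifies your later unproved claim that ``every other nonzero covariance is nonnegative,'' and it is the version your argument actually uses.
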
 

\begin{proof}  
For $\bm x \in \cX_{r, n}$ and an admissible $r$-palette $\bm a = (a_1, \ldots, a_r)$,
\begin{align}\label{eq:colorprobability}
\P(\bm x \text{ is } \bm a\text{-colorful})
= \prod_{s=1}^r p_{a_s}
= n^{-\sum_{s=1}^r \theta_{a_s}}
= n^{-\sum_{a \in S(\bm a)} w_a \theta_a}  .  
\end{align}
Combining this with \eqref{eq:Xrn} and \eqref{eq:Xrncolor}, the result in \eqref{eq:EXrn} follows.  

Next, we compute the variance. To this end, for notational convenience denote, 
$$\cE_{\bm a}(\bm x)  = \{ \bm x \text{ is } \bm a\text{-colorful}\}  .  $$
Hence, recalling \eqref{eq:Xrncolor} gives, 
\begin{align}\label{eq:varianceXrnterms}
    \Var[X_{r, n}(\bm a)]  & =  \sum_{\bm x \in \cX_{r, n}} \Var\left[ \bm 1\{\cE_{\bm a}(\bm x)\} \right]  + \sum_{\bm x \ne \bm x' \in \cX_{r, n}} \Cov\left[ \bm 1\{\cE_{\bm a}(\bm x)\}, \bm 1\{\cE_{\bm a}(\bm x')\} \right] .  
\end{align}
From \eqref{eq:colorprobability}, 
$$\Var\left[ \bm 1\{\cE_{\bm a}(\bm x)\} \right]  = 
\P(\cE_{\bm a}(\bm x)) \left( 1-\P(\cE_{\bm a}(\bm x))
\right) \asymp  n^{-\sum_{a \in S(\bm a)} w_a \theta_a} . $$
Hence, recalling \eqref{eq:Xrn}, 
\begin{align}\label{eq:varianceterms}
\sum_{ \bm x \in \cX_{r, n} } \Var\left[ \bm 1\{\cE_{\bm a}(\bm x)\} \right] \asymp n^{2 - \sum_{ a \in S(\bm a) } w_a \theta_a } . 
\end{align}
Now, suppose $\bm x=(x, x+d, \ldots, x + (r-1)d)$ and $\bm x'=(x' , x'+d', \ldots, x'+ (r-1)d')$ be two distinct $r$-APs in $\cX_{r, n}$, for some $x, x' \in [n]$ and $d, d' \geq 1$. Let 
$$\|\bm x - \bm x'\|_0 \coloneqq |\{s \in [r] : x+(s-1) d = x'+(s-1) d' \}|.$$
Note that if $\|\bm x - \bm x'\|_0 \geq 2$, then the two corresponding equalities determine the same arithmetic progression $\bm x=\bm x'$. Hence, we can assume $\|\bm x - \bm x'\|_0 \leq 1$. Also, define 
$$|\bm x \cap \bm x'| = |\{x, x+d, \ldots, x + (r-1)d \} \cap \{x', x'+d', \ldots, x' + (r-1)d'\}|$$ 
which is the number of common points of the two progressions. Now, we consider the following cases:

\begin{itemize}

\item[$(1)$] $|\bm x \cap \bm x'| = 0$: Then $\bm x$ and $\bm x'$ have no elements in common and, hence, 
\begin{align}\label{eq:covdisjoint}
\Cov\left[ \bm 1\{\cE_{\bm a}(\bm x)\}, \bm 1\{\cE_{\bm a}(\bm x')\} \right]= 0 . 
\end{align}

\item[$(2)$] $|\bm x \cap \bm x'| \geq 2$: Since $\bm x\neq \bm x'$, we have $\|\bm x-\bm x'\|_0\leq 1$. Hence, there exists at least one pair $(s,s')$, with $s\neq s'$ in $[r]$, such that $x+(s-1)d=x'+(s'-1)d'$. First, suppose there exists such a pair $(s, s')$ for which $a_s\neq a_{s'}$, that is, the two progressions require different colors at a common point. In this case, $\P(\cE_{\bm a}(\bm x')\cap \cE_{\bm a}(\bm x))=0$, and hence, by \eqref{eq:colorprobability},
\begin{equation}\label{eq:coloras}
\Cov\left[\bm 1\{\cE_{\bm a}(\bm x)\},\bm 1\{\cE_{\bm a}(\bm x')\}\right] = -\P(\cE_{\bm a}(\bm x'))\P(\cE_{\bm a}(\bm x)) = -n^{-2\sum_{a\in S(\bm a)}w_a\theta_a}.
\end{equation}  
Next, suppose that $a_s=a_{s'}$ for every pair $(s,s')$, with $s\neq s'$ in $[r]$, such that $x+(s-1)d=x'+(s'-1)d'$. In this case, the color requirements of the two progressions are compatible at every common point. Since $\bm x\neq\bm x'$, the progression $\bm x'$ contains at least one point that does not belong to $\bm x$. Conditional on $\cE_{\bm a}(\bm x)$, this new point must receive one of the colors in $S(\bm a)$. Therefore,
$$\P(\cE_{\bm a}(\bm x')|\cE_{\bm a}(\bm x)) \leq n^{-\theta_{\min}},$$
where $\theta_{\min}:=\min_{a\in S(\bm a)}\theta_a$. Consequently,
\begin{align*}
\Cov\left[\bm 1\{\cE_{\bm a}(\bm x)\},\bm 1\{\cE_{\bm a}(\bm x')\}\right] \leq
\P(\cE_{\bm a}(\bm x)\cap\cE_{\bm a}(\bm x')) &=
\P(\cE_{\bm a}(\bm x))
\P(\cE_{\bm a}(\bm x')|\cE_{\bm a}(\bm x))\\
&\leq
n^{-\sum_{a\in S(\bm a)}w_a\theta_a-\theta_{\min}}.
\end{align*}
Moreover, for each fixed $\bm x\in\cX_{r, n}$, the number of choices of $\bm x'\neq\bm x$ such that $|\bm x\cap\bm x'|\geq2$ is $O_r(1)$. This is because, one can choose two distinct common points of $\bm x$ and their two positions in $\bm x'$ in only $O_r(1)$ ways, and these choices uniquely determine $\bm x'$. Hence, by \eqref{eq:Xrn}, the number of ordered pairs $(\bm x,\bm x')$ with $\bm x\neq\bm x'$ and $|\bm x\cap\bm x'|\geq2$ is $O_r (n^2)$.  
Hence, 
\begin{align}\label{eq:covtwopoint}
\sum_{\substack{\bm x\neq\bm x'\in\cX_{r, n}\\ |\bm x\cap\bm x'|\geq2}}
\left|
\Cov\left[\bm 1\{\cE_{\bm a}(\bm x)\},\bm 1\{\cE_{\bm a}(\bm x')\}\right]
\right|  \lesssim_r n^{2-\sum_{a\in S(\bm a)}w_a\theta_a-\theta_{\min}}  \ll  n^{2-\sum_{a\in S(\bm a)}w_a\theta_a}  ,  
\end{align}
since $\sum_{a\in S(\bm a)}w_a\theta_a\geq\theta_{\min}$ and $\theta_{\min}>0$.

\item[$(3)$] 

Suppose $|\bm x \cap \bm x'|=1$: Then there exists exactly one pair $(s, s')$, with $s, s' \in [r]$ (which may or may not be equal), such that $x+ (s-1) d = x' + (s'-1) d'$. As in the previous case, if $a_s \ne a_{s'}$, then $\P(\cE_{\bm a}(\bm x') \cap \cE_{\bm a}(\bm x))=0$ and the covariance is given by \eqref{eq:coloras}. Now, suppose $a_s=a_{s'} = b$, for some $b \in S(\bm a)$. Conditional on $\cE_{\bm a}(\bm x)$, the common point $x+ (s-1) d = x' + (s'-1) d'$ already has the color required at the $s'$-th position of $\bm x'$. Therefore, for $\cE_{\bm a}(\bm x')$ to occur, only the remaining $r-1$ color requirements along $\bm x'$ must be realized. Since $|\bm x\cap\bm x'|=1$, these remaining points do not belong to $\bm x$ and their colors are independent of $\cE_{\bm a}(\bm x)$. Hence, 
\begin{equation*}
    \P(\cE_{\bm a}(\bm x')| \cE_{\bm a}(\bm x))
    =
    n^{-\sum_{a \in S(\bm a)} w_a \theta_a+\theta_{b}}.  
\end{equation*}
Consequently, 
\begin{align*}
    \Cov\left[ \bm 1\{\cE_{\bm a}(\bm x)\}, \bm 1\{\cE_{\bm a}(\bm x')\} \right] 
    & = n^{-2 \sum_{a \in S(\bm a)} w_a \theta_a + \theta_{b}}  
    - n^{-2 \sum_{a \in S(\bm a)} w_a \theta_a }  \\  
    & \asymp n^{-2 \sum_{a \in S(\bm a)} w_a \theta_a + \theta_{b}},
\end{align*}
where the last relation follows since $\theta_b>0$. Note that the number of pairs $\bm x , \bm x' \in \cX_{r, n}$ with $|\bm x \cap \bm x'|=1$ is $\lesssim_r n^3$. This is because there are $O(n)$ choices for the common point and $O(n)$ choices for each of the two common differences. Also, note that the contribution to the covariance from pairs $\bm x , \bm x' \in \cX_{r, n}$, with $|\bm x \cap \bm x'|=1$ and the two required colors at the common point are not compatible is 
$$O_r(n^{3-2\sum_{a\in S(\bm a)}w_a\theta_a}) = o(n^{3 - 2\sum_{a \in S(\bm a)} w_a \theta_a + \theta_{\max}}) . $$ 
Combining the above gives, 
\begin{align}
\sum_{\substack{\bm x\neq\bm x'\in\cX_{r, n}\\ |\bm x\cap\bm x'|=1 }}
\Cov\left[
\bm 1\{\cE_{\bm a}(\bm x)\},
\bm 1\{\cE_{\bm a}(\bm x')\}
\right]
\asymp_r
n^{3 - 2\sum_{a \in S(\bm a)} w_a \theta_a + \theta_{\max}}.
\label{eq:covonepoint}
\end{align}
For the lower bound in \eqref{eq:covonepoint}, choose a color $b\in S(\bm a)$ with $\theta_b=\theta_{\max}$ and a position $s \in [r]$ for which $a_s=b$, and consider pairs whose unique common point occurs in position $s$ in both progressions. There are $\gtrsim_r n^3$ such pairs, and each contributes $\asymp n^{-2 \sum_{a \in S(\bm a)} w_a \theta_a+\theta_{\max}}$, which gives the desired lower bound.

\end{itemize}

Combining \eqref{eq:varianceXrnterms}, \eqref{eq:varianceterms}, \eqref{eq:covdisjoint}, \eqref{eq:covtwopoint}, and \eqref{eq:covonepoint}, the result in \eqref{eq:varianceXrn} follows.  
\end{proof}

Using the above lemma, we can now complete the proof of Theorem \ref{thm:region}. First, suppose that $\sum_{a \in S(\bm a)} w_a \theta_a>2$. Then, by \eqref{eq:EXrn},
$\mathbb{E}[X_{r,n}(\bm a)] \ll 1$, and hence, by Markov's inequality,
$$\P(X_{r,n}(\bm a) > 0) \leq \mathbb{E}[X_{r,n}(\bm a)] \ll 1. $$
Next, suppose $\theta_{\mathrm{max}}>1$. Choose $s \in [r]$ such that 
$\theta_{a_s}=\theta_{\mathrm{max}}$. Let $Y_n= |\{ i \in [n]: \phi_n(i) = a_s\}|,$
where $\phi_n$ is a $\mu_{\bm p}$-random coloring of $[n]$. Note that
$\{X_{r,n}(\bm a)>0\} \subseteq \{Y_n>0\}$, and hence,
$$
\P(X_{r,n}(\bm a)>0)
\leq \P(Y_n>0)
\leq \mathbb{E}[Y_n]
= n^{1-\theta_{a_s}}
= n^{1-\theta_{\mathrm{max}}} \ll 1.
$$
This completes the proof of the $0$-statement in \eqref{eq:XHGn}.

To prove the $1$-statement, suppose that
$\sum_{a \in S(\bm a)} w_a \theta_a<2$ and $\theta_{\mathrm{max}}<1$. In this case, by \eqref{eq:EXrn}, $\mathbb{E}[X_{r,n}(\bm a)] \gg 1$. Further, recalling \eqref{eq:varianceXrn},
\begin{align*}
    \frac{\Var[X_{r,n}(\bm a)]}{\mathbb E[X_{r,n}(\bm a)]^2}
    &\lesssim_r 
    \frac{
    n^{2-\sum_{a \in S(\bm a)} w_a \theta_a}
    +
    n^{3-2\sum_{a \in S(\bm a)} w_a \theta_a+\theta_{\mathrm{max}}}
    }{
    n^{4-2\sum_{a \in S(\bm a)} w_a \theta_a}
    }\\
    &\lesssim_r
    n^{\sum_{a \in S(\bm a)} w_a \theta_a-2}
    +
    n^{\theta_{\mathrm{max}}-1}
    \ll 1 , 
\end{align*}
Therefore, by Chebyshev's inequality, $\frac{X_{r,n}(\bm a)}{\mathbb E[X_{r,n}(\bm a)]}\stackrel{P}{\rightarrow} 1$. Since $\mathbb{E}[X_{r,n}(\bm a)] \gg 1$, it follows that $X_{r,n}(\bm a)\stackrel{P}{\rightarrow}\infty$. This completes the proof of Theorem \ref{thm:region}.  \hfill $\Box$

\section{ Proof of Theorem \ref{normal} }
\label{sec:normalpf}

The proof of Theorem \ref{normal} will use Stein's method based on  dependency graphs. We begin by defining the notion of a dependency graph.

\begin{definition}[Dependency graph]
    Let $\{T_v\}_{v\in V}$ be a family of random variables (defined on a common probability space) indexed by a finite set $V$. A graph $\mathcal G$ with vertex set $V$ is said to be a dependency graph for the collection $\{T_v\}_{v\in V}$ if the following holds: For any two disjoint subsets $A, B \subseteq V$ such that there is no edge in $\mathcal{G}$ joining a vertex in $A$ to a vertex in $B$, the collections of random variables $\{T_v\}_{v\in A}$ and $\{T_v\}_{v\in B}$ are independent.
\end{definition}

For a dependency graph $\mathcal G$ and $u \in V$, denote by $\overline{N}_\mathcal{G}(u)$ the set consisting of the neighbors of $u$ in $\mathcal{G}$ together with $u$ itself. Also, denote $\overline{N}_\mathcal{G}(u,v):=\overline{N}_\mathcal{G}(u)\cup\overline{N}_\mathcal{G}(v)$. We will use the following version of Stein's method based on dependency graphs.

\begin{theorem}[{\cite[Theorem 6.33]{JLR}}]
    \label{thm:W}
    Let $\{T_v\}_{v\in V}$ be a family of random variables with dependency graph $\mathcal{G}$. Assume $\E[T_v]=0$ for all $v\in V$. Let $W=\frac{1}{\sigma}\sum_{v\in V}T_v$,  where $\sigma^2=\Var\left[\sum_{v\in V}T_v\right]>0$. Suppose that
    \begin{equation*}
        \sum_{w\in V}\E[|T_w|] \le R
        \quad\text{and}\quad
        \sum_{w\in\overline{N}_\mathcal{G}(u,v)}
        \E[|T_w|| T_u,T_v] \le Q
    \end{equation*}
    almost surely, for all $u,v\in V$. Then, for $Z \sim N(0,1)$, $\mathrm{Wass}(W,Z)\lesssim \frac{RQ^2}{\sigma^3}$.  
\end{theorem}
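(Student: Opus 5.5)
We will prove Theorem \ref{thm:W} by the standard Stein method for Wasserstein distance. Fix a $1$-Lipschitz test function $h$ and let $f=f_h$ solve the Stein equation $f'(w)-wf(w)=h(w)-\E[h(Z)]$. The classical bounds $\|f\|_\infty\lesssim 1$, $\|f'\|_\infty\lesssim 1$ and $\|f''\|_\infty\le 2$ hold for Lipschitz $h$. Hence it suffices to show
$$|\E[f'(W)-Wf(W)]|\lesssim \|f''\|_\infty \frac{RQ^2}{\sigma^3}$$
uniformly over $f$ with bounded second derivative.

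\textbf{Step 1: first local decomposition.} For $v\in V$, set
$$D_v=\frac1\sigma\sum_{w\in\overline N_{\mathcal G}(v)}T_w, \qquad W_v=W-D_v.$$
Every summand of $W_v$ is indexed by a vertex not adjacent to $v$, so $W_v$ is independent of $T_v$. Since $\E[T_v]=0$, this gives
$$\E[Wf(W)]=\frac1\sigma\sum_v\E[T_v(f(W)-f(W_v))].$$
Taylor expanding around $W_v$ gives
$$\E[Wf(W)]=\frac1\sigma\sum_v\E[T_vD_vf'(W_v)]+\mathcal R_1, \qquad |\mathcal R_1|\le\frac{\|f''\|_\infty}{2\sigma}\sum_v\E[|T_v|D_v^2].$$
Write $D_v^2\le \sigma^{-2}\bigl(\sum_{w\in\overline N(v)}|T_w|\bigr)^2$ and expand the square as $\sum_{w}|T_w|\sum_{u\in\overline N(v)}|T_u|$. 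Conditioning on $(T_v,T_w)$ and using $\overline N(v)\subseteq\overline N(v,w)$, the inner sum has conditional expectation at most $Q$. Conditioning on $T_v$ alone, the outer sum is at most $Q$ as well, since $\overline N(v)\subseteq \overline N(v,v)$. Summing $\E|T_v|$ over $v$ gives $|\mathcal R_1|\lesssim RQ^2/\sigma^3$.

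\textbf{Step 2: decoupling $T_vT_w$ from $W$.} For $w\in\overline N(v)$, let
$$W_{vw}=W-\frac1\sigma\sum_{u\in\overline N(v,w)}T_u.$$
This is independent of the pair $(T_v,T_w)$, because its summands are indexed by vertices adjacent to neither $v$ nor $w$. The variance identity gives
$$\sigma^2=\sum_v\sum_{w\in\overline N(v)}\E[T_vT_w],$$
so we can write
$$\frac1\sigma\sum_v\E[T_vD_vf'(W_v)]-\E f'(W)=\frac1{\sigma^2}\sum_v\sum_{w\in\overline N(v)}\Bigl(\E[T_vT_wf'(W_v)]-\E[T_vT_w]\,\E f'(W)\Bigr).$$
Insert $f'(W_{vw})$ in both terms. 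The independence gives $\E[T_vT_wf'(W_{vw})]=\E[T_vT_w]\E f'(W_{vw})$, so two error terms remain:
$$\|f''\|_\infty\,\E\bigl[|T_vT_w|\,|W_v-W_{vw}|\bigr] \quad\text{and}\quad \|f''\|_\infty\,\E|T_vT_w|\;\E|W-W_{vw}|.$$
Both $|W_v-W_{vw}|$ and $|W-W_{vw}|$ are bounded by $\sigma^{-1}\sum_{u\in\overline N(v,w)}|T_u|$.

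\textbf{Step 3: bounding the error terms.} For the first term, conditioning on $(T_v,T_w)$ bounds it by $\|f''\|_\infty\,\E|T_vT_w|\,Q/\sigma$. Summing over $w\in\overline N(v)$ and conditioning on $T_v$ gives
$$\sum_{w\in\overline N(v)}\E|T_vT_w|\le Q\,\E|T_v|.$$
Summing over $v$ then yields at most $\sigma^{-2}\cdot RQ\cdot Q/\sigma$. The second term is the main obstacle, since the factor $\E|W-W_{vw}|$ is an unconditional expectation. I will handle it by noting that $\E\sum_{u\in\overline N(v,w)}|T_u|\le Q$, because this is the expectation of a quantity bounded by $Q$ almost surely after conditioning on $(T_v,T_w)$. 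It therefore obeys the same bound as the first term. Collecting Steps 1--3 gives $|\E[f'(W)-Wf(W)]|\lesssim RQ^2/\sigma^3$. Taking the supremum over $1$-Lipschitz $h$ proves $\mathrm{Wass}(W,Z)\lesssim RQ^2/\sigma^3$.
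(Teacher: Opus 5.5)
Your proof is correct. The paper does not prove this statement itself; it imports it as a black box from \cite[Theorem 6.33]{JLR}. Your argument is essentially the standard dependency-graph form of Stein's method (in the style of Barbour, Karo\'nski and Ruci\'nski) on which results of this type rest. It uses $W_v$ to decouple $T_v$, and $W_{vw}$ to decouple the pair $(T_v,T_w)$. The case $u=v$ of the conditional hypothesis gives $\sum_{w\in\overline N_{\mathcal G}(v)}\E|T_vT_w|\le Q\,\E|T_v|$. Combined with the Stein-solution bounds for Lipschitz test functions, this yields exactly the Wasserstein form used in the paper.

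Two cosmetic remarks. First, $\|f''\|_\infty$ should be read as the Lipschitz constant of $f'$. Second, the same $u=v$ hypothesis forces $|T_v|\le Q$ almost surely, so every integrability step you use is automatic.
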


We can now proceed with the proof of Theorem \ref{normal}. To begin with, for $\bm x \in \cX_{r, n}$, define 
\begin{align*} 
Y_{\bm x} = \bm{1}\{ \cE_{\bm a}(\bm x)\} - \mathbb{E}[\bm{1}\{ \cE_{\bm a}(\bm x)\}] = \bm{1}\{ \cE_{\bm a}(\bm x)\}  -  \mu  , 
\end{align*}
where
$\mu=\mathbb{P}(\cE_{\bm a}(\bm x)) =n^{-\sum_{a \in S(\bm a)} w_a \theta_a}$
(recall \eqref{eq:colorprobability}), and $\sigma^2=\Var\left[\sum_{\bm x \in \cX_{r, n}}Y_{\bm x}\right] = \Var[X_{r,n}(\bm a)]$. 
Then, recalling \eqref{eq:ZHGn},
$$
Z_{r,n}(\bm a)
=
\frac{1}{\sigma}
\sum_{\bm x\in\cX_{r, n}}Y_{\bm x}.
$$
Construct a dependency graph $\mathcal G$ for the collection of random variables
$\{Y_{\bm x}:\bm x\in \cX_{r, n} \}$ with vertex set $\cX_{r, n}$ as follows: Connect two distinct vertices $\bm x,\bm x' \in \cX_{r, n}$ by an edge if and only if $|\bm x\cap\bm x'|\geq 1.$
This is a dependency graph since $Y_{\bm x}$ depends only on the colors assigned to the points of $\bm x$, and collections of APs with no common points depend on disjoint sets of independently assigned colors.

To apply Theorem \ref{thm:W}, first note that $\E|Y_{\bm x}| = 2\mu(1-\mu) \leq 2\mu$. Hence, from \eqref{eq:Xrn},
\begin{align}
\sum_{\bm x\in\cX_{r, n}}\E|Y_{\bm x}|
&\lesssim |\cX_{r, n}|\mu
\lesssim
n^{2-\sum_{a\in S(\bm a)}w_a\theta_a}.
\label{eq:Y}
\end{align}
Next, fix $\bm x',\bm x''\in\cX_{r, n}$ and consider
$\bm x\in\overline{N}_{\mathcal G}(\bm x',\bm x'')$. Write $\bm x=(x,x+d,\ldots,x+(r-1)d) $
and define
$$ J(\bm x | \bm x',\bm x'') := \{s\in[r]:x+(s-1)d\in\bm x'\cup\bm x''\}.$$
Since $Y_{\bm x}= \bm{1}\{ \cE_{\bm a}(\bm x)\}  -  \mu$, 
\begin{align*}
\E[|Y_{\bm x}|| Y_{\bm x'},Y_{\bm x''}] \leq \E[ \bm{1}\{ \cE_{\bm a}(\bm x)\} | Y_{\bm x'},Y_{\bm x''}]+\mu = \P(\cE_{\bm a}(\bm x)| Y_{\bm x'},Y_{\bm x''})+\mu.
\end{align*}
The conditioning on $Y_{\bm x'}$ and $Y_{\bm x''}$ depends only on the colors assigned to the points in $\bm x'\cup\bm x''$. Therefore, the colors of the points of $\bm x$ lying outside $\bm x'\cup\bm x''$ remain independent and retain their original distributions. Consequently,
\begin{align*}
\P(\cE_{\bm a}(\bm x)| Y_{\bm x'},Y_{\bm x''}) \leq \prod_{s\notin J(\bm x | \bm x',\bm x'')}p_{a_s} = \mu \prod_{s\in J(\bm x | \bm x',\bm x'')}p_{a_s}^{-1} =
\mu n^{\sum_{s\in J(\bm x | \bm x',\bm x'')}\theta_{a_s}}.
\end{align*}
We now consider the following two cases: 
\begin{itemize}

\item $|J(\bm x | \bm x',\bm x'')|=1$: Then
\begin{align*}
\E[|Y_{\bm x}|| Y_{\bm x'},Y_{\bm x''}]
&\leq
\mu n^{\theta_{\max}}+\mu
\lesssim
\mu n^{\theta_{\max}}.
\end{align*}
For fixed $\bm x'$ and $\bm x''$, there are $O_r(n)$ choices of $\bm x$ satisfying
$|J(\bm x | \bm x',\bm x'')|=1$. This is because, there are $O_r(1)$ choices for the common point and its position in $\bm x$, and then $O_r(n)$ choices for the common difference of $\bm x$. Hence,
\begin{align*}
\sum_{\substack{\bm x\in\overline{N}_{\mathcal G}(\bm x',\bm x'')\\
|J(\bm x | \bm x',\bm x'')|=1}}
\E[|Y_{\bm x}|| Y_{\bm x'},Y_{\bm x''}]
\lesssim_r
n\mu n^{\theta_{\max}}.
\end{align*}

\item $|J(\bm x | \bm x',\bm x'')|\geq 2$: 
Note that two points of $\bm x'\cup\bm x''$, together with their positions in $\bm x$, determine $\bm x$ uniquely. Hence, in this case, there are only $O_r(1)$ choices of $\bm x$. Also, since $|Y_{\bm x}|\leq 1$,
\begin{align*}
\sum_{\substack{\bm x\in\overline{N}_{\mathcal G}(\bm x',\bm x'')\\
|J(\bm x | \bm x',\bm x'')|\geq 2}}
\E[|Y_{\bm x}|| Y_{\bm x'},Y_{\bm x''}]
\lesssim_r 1.
\end{align*} 
\end{itemize}
Combining the last two displays gives, uniformly over $\bm x',\bm x''\in\cX_{r, n}$,
\begin{align}
\sum_{\bm x\in\overline{N}_{\mathcal G}(\bm x',\bm x'')}
\E[|Y_{\bm x}|| Y_{\bm x'},Y_{\bm x''}] \lesssim_r
1+n\mu n^{\theta_{\max}} = 1+n^{1-\sum_{a\in S(\bm a)}w_a\theta_a+\theta_{\max}}.
\label{eq:neighborhood} 
\end{align}

Using the bounds \eqref{eq:Y}, \eqref{eq:neighborhood}, and \eqref{eq:varianceXrn} in Theorem \ref{thm:W} gives
\begin{align}
\mathrm{Wass}(Z_{r, n}(\bm a) , \mathcal N(0, 1)) 
& \lesssim_r \frac{ n^{2-\sum_{a\in S(\bm a)}w_a\theta_a} \left( 1+ n^{2 - 2 \sum_{a\in S(\bm a)}w_a\theta_a + 2 \theta_{\max}} \right)  }{ \left( n^{2 - \sum_{ a \in S(\bm a) } w_a \theta_a } + n^{3 - 2\sum_{ a \in S(\bm a) } w_a \theta_a + \theta_{\mathrm{max}} }\right)^\frac{3}{2} } .
\label{eq:wassbound}
\end{align}
For notational convenience, let $\Lambda_{\bm a}:=\sum_{a\in S(\bm a)}w_a\theta_a$.
Recall that, in the satisfiability region \eqref{eq:manycopies}, $\Lambda_{\bm a}<2$ and $\theta_{\max}<1$. First, suppose $1-\Lambda_{\bm a}+\theta_{\max}\leq 0$, or equivalently,
$\Lambda_{\bm a}\geq 1+\theta_{\max}$. Then $1+n^{2-2\Lambda_{\bm a}+2\theta_{\max}}\lesssim 1$, and the first term in \eqref{eq:varianceXrn} dominates the second. Hence, \eqref{eq:wassbound} gives,  
\begin{align}\label{eq:Zrnalambda}  
\mathrm{Wass}(Z_{r, n}(\bm a) , \mathcal N(0, 1)) & \lesssim_r \frac{n^{2-\Lambda_{\bm a}}} {n^{\frac{3}{2}(2-\Lambda_{\bm a})}} =
n^{-1+\frac{\Lambda_{\bm a}}{2}}  ,  
\end{align}
since $\Lambda_{\bm a}<2$. Next, suppose $1-\Lambda_{\bm a}+\theta_{\max}>0$, or equivalently,
$\Lambda_{\bm a}<1+\theta_{\max}$. Then $1+n^{2-2\Lambda_{\bm a}+2\theta_{\max}} \lesssim n^{2-2\Lambda_{\bm a}+2\theta_{\max}}$, and the second term in \eqref{eq:varianceXrn} dominates the first. Therefore,
\begin{align}\label{eq:Zrnamax}
\mathrm{Wass}(Z_{r, n}(\bm a) , \mathcal N(0, 1)) \lesssim_r \frac{ n^{4-3\Lambda_{\bm a}+2\theta_{\max}}}{n^{\frac{3}{2}(3-2\Lambda_{\bm a}+\theta_{\max})}} & = n^{-\frac{1}{2}+\frac{\theta_{\max}}{2}} . 
\end{align}
since $\theta_{\max}<1$. Combining \eqref{eq:Zrnalambda} and \eqref{eq:Zrnamax}, the result in \eqref{eq:W} follows. Thus, throughout the satisfiability region, $
\mathrm{Wass}(Z_{r, n}(\bm a) , \mathcal N(0, 1)) \rightarrow 0$. This completes the proof of Theorem \ref{normal}.  \hfill $\Box$

\section{ Proof of Theorem \ref{thm:poisson} }
\label{sec:poissonpf}

We will prove Theorem \ref{thm:poisson} using the Chen-Stein method for Poisson approximation based on dependency graphs \cite{poissonapproximation,poissondependency}.     

\begin{theorem}[{\cite[Theorem~15]{poissonapproximation}}]  
Suppose $\{T_\alpha:\alpha\in A\}$ be a collection of Bernoulli random variables and 
$T:=\sum_{\alpha\in A} T_\alpha$. Let $\mathcal G$ be a dependency graph for this collection.  Then, with $\lambda:=\mathbb E[T]$, the following holds: 
$$\mathrm{TV} \left( T, \operatorname{Pois}(\lambda) \right) \leq
\min\left\{1, \frac{1}{\lambda} \right\} \left( B_1+B_2 \right) , $$ 
where 
$$B_1 := \sum_{\alpha\in A} \sum_{\beta\in \overline{N}_{\mathcal{G}}(\alpha)}
\mathbb E[T_\alpha]\mathbb E[T_\beta] \text{ and } B_2 :=
\sum_{\alpha \in A}
\sum_{\substack{\beta\in \overline{N}_{\mathcal{G}}(\alpha)\\\beta\neq\alpha}}
\mathbb E[T_\alpha T_\beta].$$ 
\label{thm:poissondependencygraph} 
\end{theorem}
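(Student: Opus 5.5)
This is the classical Chen--Stein bound, and the plan is to rederive it directly from the Stein--Chen characterization of the Poisson law, with the dependency graph supplying the local decomposition. Write $p_\alpha:=\E[T_\alpha]$, so $\lambda=\sum_\alpha p_\alpha$. For each $A\subseteq\mathbb Z_{\ge 0}$ we consider the solution $f=f_{A}$ of the Stein equation
$$\lambda f(k+1)-kf(k)=\bm 1\{k\in A\}-\P(\mathrm{Pois}(\lambda)\in A),\qquad k\ge 0,$$
with $f(0)=0$. Evaluating at $k=T$ and taking expectations gives
$$\P(T\in A)-\P(\mathrm{Pois}(\lambda)\in A)=\E[\lambda f(T+1)-Tf(T)].$$
Hence it suffices to bound $|\E[\lambda f(T+1)-Tf(T)]|$ uniformly in $A$.

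\textbf{Step 1 (Stein factor).} I will use the standard estimate on the increments of $f_A$:
$$\|\Delta f_A\|_\infty:=\sup_k|f_A(k+1)-f_A(k)|\le \frac{1-e^{-\lambda}}{\lambda}\le \min\{1,\lambda^{-1}\}.$$
This comes from the explicit formula $f_A(k+1)=\frac{(k)!}{\lambda^{k+1}}e^{\lambda}\bigl[\P(\mathrm{Pois}(\lambda)\in A\cap[0,k])-\P(\mathrm{Pois}(\lambda)\in A)\P(\mathrm{Pois}(\lambda)\le k)\bigr]$. One first reduces to singletons $A=\{j\}$, using linearity in $A$ and the fact that $f_{\{j\}}(k+1)-f_{\{j\}}(k)$ is positive only when $k=j$. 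One then checks that $f_{\{j\}}(j+1)-f_{\{j\}}(j)\le(1-e^{-\lambda})/\lambda$. I expect this to be the main technical obstacle. It is a careful but elementary computation with Poisson tail sums; if needed I would simply quote it from Barbour--Holst--Janson.

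\textbf{Step 2 (local decomposition).} For each $\alpha$ set
$$U_\alpha:=\sum_{\beta\in\overline N_{\mathcal G}(\alpha)}T_\beta,\qquad V_\alpha:=T-U_\alpha.$$
By the definition of a dependency graph applied to $\{\alpha\}$ and $V\setminus\overline N_{\mathcal G}(\alpha)$, the variable $T_\alpha$ is independent of $V_\alpha$. Writing $\E[\lambda f(T+1)-Tf(T)]=\sum_\alpha\bigl(p_\alpha\E f(T+1)-\E[T_\alpha f(T)]\bigr)$, we add and subtract $p_\alpha\E f(V_\alpha+1)=\E[T_\alpha f(V_\alpha+1)]$; the equality holds by independence. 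This splits each summand into
$$p_\alpha\,\E[f(T+1)-f(V_\alpha+1)]\;+\;\E\bigl[T_\alpha\bigl(f(V_\alpha+1)-f(V_\alpha+U_\alpha)\bigr)\bigr],$$
where in the second term we used that $T=V_\alpha+U_\alpha$.

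\textbf{Step 3 (bounding the two terms).} By telescoping, $|f(T+1)-f(V_\alpha+1)|\le\|\Delta f\|\,U_\alpha$. Hence the first term is at most $\|\Delta f\|\,p_\alpha\sum_{\beta\in\overline N_{\mathcal G}(\alpha)}p_\beta$, and summing over $\alpha$ gives $\|\Delta f\|B_1$. On $\{T_\alpha=1\}$ we have $U_\alpha-1=\sum_{\beta\in\overline N_{\mathcal G}(\alpha),\beta\ne\alpha}T_\beta$, since the $T$'s are Bernoulli. Therefore the second term is at most $\|\Delta f\|\sum_{\beta\ne\alpha}\E[T_\alpha T_\beta]$, and summing over $\alpha$ gives $\|\Delta f\|B_2$. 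Combining with Step 1 yields $|\P(T\in A)-\P(\mathrm{Pois}(\lambda)\in A)|\le\min\{1,\lambda^{-1}\}(B_1+B_2)$ for every $A$. Taking the supremum over $A$ gives the stated total variation bound.
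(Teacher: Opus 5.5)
Your proof is correct. The Stein equation with the bound $\|\Delta f_A\|_\infty\le(1-e^{-\lambda})/\lambda$ (which you can quote from Barbour--Holst--Janson), followed by the split $T=U_\alpha+V_\alpha$ with $T_\alpha$ independent of $V_\alpha$, is the standard Arratia--Goldstein--Gordon/Ross derivation of this bound. The paper does not prove the statement; it imports it by citation, so your argument supplies the standard proof behind the cited result.
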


As in the proof of Theorem \ref{normal}, construct a dependency graph $\mathcal G$ for the collection of random variables
$\{\bm 1\{\cE_{\bm a}(\bm x)\} :\bm x\in \cX_{r, n} \}$ with vertex set $\cX_{r, n}$ as follows: Connect two distinct vertices $\bm x,\bm x'\in\cX_{r, n}$ by an edge if and only if $|\bm x\cap\bm x'|\geq 1.$ This is a dependency graph since the events corresponding to disjoint collections of APs depend on disjoint sets of independently assigned colors.

Recall that
$$
\mu=\mathbb{P}(\cE_{\bm a}(\bm x))
=n^{-\sum_{a \in S(\bm a)} w_a \theta_a}
=n^{-2},
$$
where the last equality follows from the assumption
$\sum_{a \in S(\bm a)} w_a \theta_a=2$. Hence, by \eqref{eq:Xrn},
\begin{align}
\lambda_n:=\E[X_{r,n}(\bm a)]
=|\cX_{r, n}|\mu = \frac{1}{2(r-1)} + O_r\left(\frac{1}{n}\right).
\label{eq:lambdapoisson}
\end{align}
We now bound the quantities $B_1$ and $B_2$ in Theorem \ref{thm:poissondependencygraph}. First, for every fixed $\bm x\in\cX_{r, n}$, note that the number of $\bm x'\in\cX_{r, n}$ such that $|\bm x\cap\bm x'|\geq 1$ is $O_r(n)$. Therefore,
\begin{align}\label{eq:poissonmain}
B_1 &=
\sum_{\bm x\in\cX_{r, n}}
\sum_{\bm x'\in\overline{N}_{\mathcal G}(\bm x)}
\E[\bm 1\{\cE_{\bm a}(\bm x)\}]
\E[\bm 1\{\cE_{\bm a}(\bm x')\}] \lesssim_r
|\cX_{r, n}| n \mu^2 \lesssim_r n^{-1} . 
\end{align} 
Next, we analyze $B_2$. We consider the following cases: 

\begin{itemize} 

\item $|\bm x\cap\bm x'|=1$: Let the unique common point occur at position $s$ in $\bm x$ and position $s'$ in $\bm x'$. If $a_s\neq a_{s'}$, then the two coloring requirements at the common point are incompatible, and hence, $\P(\cE_{\bm a}(\bm x)\cap\cE_{\bm a}(\bm x'))=0$.
On the other hand, if $a_s=a_{s'}=b$, for some $b\in S(\bm a)$, then
\begin{align*}
\P(\cE_{\bm a}(\bm x)\cap\cE_{\bm a}(\bm x')) = \mu^2 p_b^{-1} = n^{-4+\theta_b} \leq
n^{-4+\theta_{\max}}.
\end{align*}
The number of ordered pairs $(\bm x,\bm x')$ of distinct $r$-APs satisfying
$|\bm x\cap\bm x'|=1$ is $O_r(n^3)$. Hence, 
\begin{align}
\sum_{\substack{\bm x\neq\bm x'\in\cX_{r, n}\\
|\bm x\cap\bm x'|=1}}
\P(\cE_{\bm a}(\bm x)\cap\cE_{\bm a}(\bm x'))
\lesssim_r
n^{-1+\theta_{\max}}  . 
\label{eq:poissononepoint}
\end{align}

\item $|\bm x\cap\bm x'|\geq 2$. As in the proof of Lemma \ref{lm:expectationvariance}, for each fixed $\bm x\in\cX_{r, n}$, there are only $\lesssim_r 1$ choices of $\bm x'\neq\bm x$ sharing at least two points with $\bm x$. Hence, the total number of such ordered pairs is $O_r(n^2)$. If the color requirements of $\bm x$ and $\bm x'$ are incompatible at a common point, then
$\P(\cE_{\bm a}(\bm x)\cap\cE_{\bm a}(\bm x'))=0$.  Otherwise, since $\bm x\neq\bm x'$, the progression $\bm x'$ contains at least one point not belonging to $\bm x$. Conditional on $\cE_{\bm a}(\bm x)$, this new point must receive one of the colors in $S(\bm a)$, and, hence, 
$$
\P(\cE_{\bm a}(\bm x')|\cE_{\bm a}(\bm x))
\leq n^{-\theta_{\min}}.
$$
This implies that 
\begin{align*}
\P(\cE_{\bm a}(\bm x)\cap\cE_{\bm a}(\bm x')) = \P(\cE_{\bm a}(\bm x))
\P(\cE_{\bm a}(\bm x')|\cE_{\bm a}(\bm x)) \leq
n^{-2-\theta_{\min}}  .  
\end{align*}
Therefore, since $\theta_{\min}>0$,  
\begin{align}
\sum_{\substack{\bm x\neq\bm x'\in\cX_{r, n}\\
|\bm x\cap\bm x'|\geq 2}}
\P(\cE_{\bm a}(\bm x)\cap\cE_{\bm a}(\bm x'))
\lesssim_r
n^{-\theta_{\min}}  .  
\label{eq:poissontwopoint}
\end{align}
\end{itemize}  
Combining \eqref{eq:poissononepoint} and \eqref{eq:poissontwopoint}, we obtain
\begin{align}\label{eq:poissonvariance}
B_2  \lesssim_r  n^{-1+\theta_{\max}}+n^{-\theta_{\min}}  .  
\end{align}
Applying \eqref{eq:poissonmain}, \eqref{eq:poissonvariance} in Theorem \ref{thm:poissondependencygraph}, and by the triangle inequality and \eqref{eq:lambdapoisson}, 
\begin{align*}
\mathrm{TV}\!\left( X_{r,n}(\boldsymbol a), \operatorname{Pois}\!\left(\frac{1}{2(r-1)}\right)
\right) & \le \mathrm{TV}\!\left( X_{r,n}(\boldsymbol a), \operatorname{Pois}(\lambda_n)
\right) + \mathrm{TV}\!\left( \operatorname{Pois}(\lambda_n), \operatorname{Pois}\!\left(\frac{1}{2(r-1)}\right) \right)  \nonumber \\ 
&\lesssim_r n^{-1+\theta_{\max}} + n^{-\theta_{\min}} + n^{-1}.
\end{align*}
Since $0<\theta_{\min}\le \theta_{\max}<1$, the $O(\frac{1}{n})$-term can be absorbed in the preceding bound, and the result in Theorem \ref{thm:poisson} follows.  \hfill $\Box$

\section{ Proof of Theorem \ref{thm:intersectiondistribution} }
\label{sec:boundarypoissonpf}

Recall the definition of $\hat s$ from \eqref{eq:location}. For $i\in[n]$, let
\begin{equation*}
\cX_{r,n}^{(i)} \coloneqq \left\{ \bm x =(x,x+d,\ldots,x+(r-1)d) \in \cX_{r,n}: x+(\hat s-1)d=i \right\},
\end{equation*}
be the set of $r$-APs whose $\hat s$-th coordinate is pinned at $i$. Note that for $\bm x\in\cX_{r,n}^{(i)}$, the conditions $i-(\hat s-1)d\geq 1$ and $i+(r-\hat s)d\leq n$ are necessary and sufficient for an $r$-AP with common difference $d$ and $\hat s$-th term equal to $i$. Hence, 
\begin{equation}\label{eq:Xrnlocation}
\vert \cX_{r,n}^{(i)}\vert
=
\min\left\{
\left\lfloor\frac{i-1}{\hat s-1}\right\rfloor,
\left\lfloor\frac{n-i}{r-\hat s}\right\rfloor
\right\}
=
\min\left\{
\frac{i-1}{\hat s-1},
\frac{n-i}{r-\hat s}
\right\}
+O(1),
\end{equation}
where the implicit constant is uniform over $\hat s$ and $i\in[n]$. (Throughout, if one of the two terms in the minimum is undefined, it is omitted from the minimum.) Next, for $\phi_n \sim \mu_{\bm p}$, let 
\begin{equation*}
\cX_{r, n}^{(i)}(\bm a) \coloneq \left\{ \bm x =(x, x+d, \ldots, x+ (r-1) d) \in \cX_{r, n}^{(i)}: \phi_n(x+(t-1) d) = a_t , \text{ for } t \in[r]\setminus\{\hat s\} \right\},
\end{equation*}
be the set of $r$-APs with the $\hat s$-th coordinate pinned at $i \in [n]$ that satisfy all the color requirements of $\bm a$ except, possibly, at the $\hat s$-th coordinate. Define
\begin{equation}\label{eq:Xrnl}
X_{r, n}^{(i)}(\bm a)\coloneq |\cX_{r, n}^{(i)}(\bm a)| = \sum_{\bm x \in  \cX_{r, n}^{(i)}}\mathbf 1\{ \bm x \in  \cX_{r, n}^{(i)}(\bm a) \}.
\end{equation}
Note that our random variable of interest 
\begin{equation}\label{eq:XrnSn}  
X_{r, n} (\bm a)  =  \sum_{i =1}^n X_{r, n}^{(i)}(\bm a) \bm 1\{\phi_n(i)=a_{\hat s}\}  = \sum_{ i \in \cS_n}  X_{r, n}^{(i)}(\bm a)  ,  
\end{equation}
where $\cS_n:= \{ j\in[n] : \phi_n(j)=a_{\hat s}\}$ is the set of sites assigned the rare color.

We now compute the joint (finite-dimensional) falling factorial moments of the collection of random variables $\{X_{r,n}^{(i)}(\bm a)\}_{i\in[n]}$, conditional on $\cS_n$. For any fixed integer $s\geq 0$ and $z\in\R$, denote by $(z)_s=z(z-1)\cdots(z-s+1)$, with the convention $(z)_0=1$. Also, define $\theta_\bullet :=\min_{t\in[r]\setminus\{\hat s\}}\theta_{a_t} \in (0,1)$.

\begin{lemma}
    \label{lm:s}
    Fix an integer $p\geq 1$ and let $B=\{i_1,\ldots,i_p\}\subseteq[n]$ be a collection of distinct indices. Suppose $\bm a$ is an admissible $r$-palette with
    $\sum_{a \in S(\bm a)} w_a \theta_a=2$ and $\theta_{\mathrm{max}}=1$.
    Then, for any collection of non-negative integers $s_1,\ldots,s_p$,
    \begin{equation*}
       \left|  \mathbb E\left[(X_{r,n}^{(i_1)}(\bm a))_{s_1}\cdots
        (X_{r,n}^{(i_p)}(\bm a))_{s_p} | \cS_n = B \right] -
        f_n(i_1)^{s_1}\cdots f_n(i_p)^{s_p} \right|
        \lesssim_{p,r,R} n^{-\theta_\bullet},
    \end{equation*}
where $R=s_1+s_2+\cdots+s_p$ and $f_n : [n] \rightarrow \R_{\geq 0}$ is defined as:  
\begin{equation}
\label{eq:fn}
f_n(i)=\frac{1}{n}\min\left\{ \frac{i-1}{\hat s-1},\frac{n-i}{r-\hat s}\right\}. 
\end{equation}  
\end{lemma}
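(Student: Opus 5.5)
The plan is to expand the product of falling factorials as a sum over ordered tuples of distinct progressions and to compute the conditional probabilities directly. For each $j\in[p]$, $(X^{(i_j)}_{r,n}(\bm a))_{s_j}$ counts ordered $s_j$-tuples of distinct elements of $\cX^{(i_j)}_{r,n}$ all lying in $\cX^{(i_j)}_{r,n}(\bm a)$. The product is therefore a sum over families $\mathcal F=(\bm x_{j,k})_{j\in[p],k\in[s_j]}$ with $\bm x_{j,k}\in\cX^{(i_j)}_{r,n}$ and distinct within each $j$; progressions with different $j$ are automatically distinct because their $\hat s$-th coordinates differ. Each family contributes the conditional probability that every non-pinned coordinate of every $\bm x_{j,k}$ receives its required color, given $\cS_n=B$.

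Conditioning on $\cS_n=B$ fixes the points of $B$ to the rare color and leaves the other sites independent. Each such site has color $a\neq a_{\hat s}$ with probability $p_a/(1-p_{a_{\hat s}})=p_a(1+O(n^{-1}))$. A requirement at a site of $B$ either fails, since $a_t\ne a_{\hat s}$ for $t\neq\hat s$ by uniqueness of $\hat s$, or does not occur. Let $\Lambda'=\sum_{t\neq\hat s}\theta_{a_t}=2-1=1$. A family whose $R$ progressions have pairwise disjoint non-pinned supports, with none of those supports meeting $B$, contributes $(1+O(n^{-1}))n^{-R}$. By \eqref{eq:Xrnlocation}, the number of such families is $\prod_j |\cX^{(i_j)}_{r,n}|^{s_j}$ minus the degenerate ones, which equals $\prod_j (nf_n(i_j)+O(1))^{s_j}$ up to lower-order corrections. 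This main term gives $\prod_j f_n(i_j)^{s_j}+O(n^{-1})$, since $f_n\le 1$.

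Everything else is an error term to be bounded by $n^{-\theta_\bullet}$. There are two types.
\begin{itemize}
\item Families in which some non-pinned point lies in $B$. Such a family has probability $0$, and there are $O(n^{R-1})$ of them (a point of $B$ and its position determine $d$), so removing them from the count costs only $O(n^{-1})$ in the main term.
\item Families in which two progressions share a non-pinned site. I will bound these by a union-structure argument. Fix the ``overlap graph'' on the $R$ progressions. A progression is already determined by its pinned point $i_j$ together with its common difference, so it has one degree of freedom. If it shares a non-pinned point with a previously chosen progression, then that point and its position determine $d$, leaving $O_{r,R}(1)$ choices. Thus with $m$ connected components the count is $O(n^{m})$.
\end{itemize}

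For the probability of such a family, the union of the non-pinned supports must be colored correctly. Within a component of $k\ge2$ progressions, the first progression contributes $n^{-1}$. Each subsequent progression adds at least one new non-pinned site: two APs sharing the pinned point and one more point coincide, and the supports are distinct. Each new site costs at least $n^{-\theta_\bullet}$, since it must take a color $a_t$ with $t\neq\hat s$. Hence a component costs at most $n^{-1-(k-1)\theta_\bullet}$. The total contribution of families with some component of size $\ge2$ is then $O(n^{m}\cdot n^{-m}\cdot n^{-\theta_\bullet})=O(n^{-\theta_\bullet})$. Since $\theta_\bullet\le 1$, the $O(n^{-1})$ errors are absorbed.

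The main obstacle is making the counting-versus-probability bookkeeping in this last step rigorous uniformly over all overlap patterns. A shared point could appear at different positions in different progressions, so an incompatible color makes the probability zero (harmless) and a compatible one must be tracked carefully. Beyond that, each additional progression in a component must contribute both an $O(1)$ choice factor and a fresh site costing at least $n^{-\theta_\bullet}$. Everything depends only on $p,r,R$, giving the constant $\lesssim_{p,r,R}$.
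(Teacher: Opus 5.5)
Your proposal follows the paper's proof. You expand the falling factorials, take the families with pairwise disjoint supports $D(\bm x)$ avoiding $B$ as the main term, and bound the remaining families through the overlap graph: there are $O(n^{\nu})$ tuples for $\nu$ components, each with probability at most $n^{-\nu-\theta_\bullet}$.

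One intermediate claim is false, although your final estimate never uses it. A component of $k$ progressions need not cost $n^{-1-(k-1)\theta_\bullet}$. Likewise, two progressions pinned at the same point and sharing one further point need not coincide, because the shared point can occupy different positions in the two progressions.

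For a concrete counterexample, take $r=3$, $\bm a=(1,1,2)$, $\theta_1=\tfrac12$ and $\theta_2=1$, so that $\hat s=3$ and $\theta_\bullet=\tfrac12$. Let $B=\{i,i+2\}$ and consider the progressions $(i-2,i-1,i)$, $(i-4,i-2,i)$ and $(i-4,i-1,i+2)$.
\begin{itemize}
\item The first two are distinct, yet both are pinned at $i$ and both contain $i-2$.
\item The three supports together cover only the sites $i-4$, $i-2$, $i-1$, all of which require color $1$.
\item So this component has conditional probability $\asymp n^{-3/2}$, not $n^{-2}$.
\end{itemize}

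Since you only use one factor of $n^{-\theta_\bullet}$, claim only that, and justify it as the paper does. Take two adjacent progressions $\bm x\neq\bm x'$ in the component. The set $D(\bm x)$ determines $\bm x$, because the missing position $\hat s$ is known. Both supports have size $r-1$, so $D(\bm x')\setminus D(\bm x)\neq\varnothing$. That new site gives the bound $n^{-1-\theta_\bullet}$ for the whole component.
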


\begin{proof}
Note that when $R=0$, the result is immediate. Hence, assume $R\geq1$. Conditional on $\cS_n=B$, the colors at the sites of $[n]\setminus B$ are independent. Moreover, for every $b \in [c-1]\backslash \{a_{\hat s}\}$,
\begin{equation}
\mathbb P\bigl(\phi_n(j)=b| \cS_n=B\bigr)
=\frac{p_b}{1-p_{a_{\hat s}}}
=\frac{n^{-\theta_b}}{1-1/n},
\label{eq:phiB}
\end{equation}
for $j\notin B$. Now, for $\bm x=(x,x+d,\ldots,x+(r-1)d)\in\cX_{r,n}^{(i)}$, set
\begin{equation}
D(\bm x)
:=\{x+(t-1)d:t\in[r]\setminus\{\hat s\}\}.
\label{eq:D}
\end{equation}
Thus, $D(\bm x)$ is the set of sites at which
$\bm x\in\cX_{r,n}^{(i)}(\bm a)$ imposes color requirements. If
$D(\bm x)\cap B\neq\varnothing$, then
$$
\mathbb P\bigl(
\bm x\in\cX_{r,n}^{(i)}(\bm a)|\cS_n=B
\bigr)=0,
$$
since $a_t\neq a_{\hat s}$ for every
$t\in[r]\setminus\{\hat s\}$. On the other hand, if
$D(\bm x)\cap B=\varnothing$, then \eqref{eq:phiB} gives
\begin{align}
\mathbb P\bigl(
\bm x\in\cX_{r,n}^{(i)}(\bm a)
|\cS_n=B
\bigr) = \prod_{t\neq\hat s}
\frac{p_{a_t}}{1-p_{a_{\hat s}}} =
\frac{1}{n}
\left(1-\frac{1}{n}\right)^{-(r-1)},
\label{eq:conditionB}
\end{align}
since $\sum_{t\neq\hat s}\theta_{a_t} = \sum_{t=1}^r\theta_{a_t}
-\theta_{a_{\hat s}} =1$.  

Now, expanding the falling factorials gives
\begin{align}
&\mathbb E\left[(X_{r,n}^{(i_1)}(\bm a))_{s_1}\cdots
(X_{r,n}^{(i_p)}(\bm a))_{s_p}|\cS_n=B\right] \nonumber\\
&=
\sum_{\substack{
\bm x^{(1)}_1,\ldots,\bm x^{(1)}_{s_1}\in\cX_{r,n}^{(i_1)}\\
\vdots\\
\bm x^{(p)}_1,\ldots,\bm x^{(p)}_{s_p}\in\cX_{r,n}^{(i_p)}
}}
\P\left(
\bigcap_{u=1}^p\bigcap_{\ell=1}^{s_u}
\left\{
\bm x^{(u)}_\ell\in\cX_{r,n}^{(i_u)}(\bm a)
\right\}
\bigg|\cS_n=B
\right) \nonumber\\
&=:T_1+T_2,
\label{eq:expectationterms}
\end{align}
where, for each $u\in[p]$, the progressions
$\bm x^{(u)}_1,\ldots,\bm x^{(u)}_{s_u}$ are distinct and the  terms $T_1$ and $T_2$ are defined as follows: 
\begin{itemize}

\item $T_1$ is the sum over collections for which the sets $\{D(\bm x^{(u)}_\ell):\ell\in[s_u], u\in[p]\}$ are pairwise disjoint and all avoid $B$.

\item $T_2$ is the sum over all remaining collections.

\end{itemize}
For notational convenience, let $\bm s=(s_1,\ldots,s_p)$ and denote by
$\cA_{\bm s}(B)$ the range of the summation in
\eqref{eq:expectationterms}. Thus,
\begin{align*}
|\cA_{\bm s}(B)| = \prod_{u=1}^p \left(|\cX_{r,n}^{(i_u)}|\right)_{s_u} = n^R\prod_{u=1}^p f_n(i_u)^{s_u} +O_{p,r,R}(n^{R-1}),
\end{align*}
by \eqref{eq:Xrnlocation} and \eqref{eq:fn}.

We first estimate the number of terms in $T_1$. The number of
$R$-tuples in $\cA_{\bm s}(B)$ for which there exists a progression
$\bm x$ satisfying $D(\bm x)\cap B\neq\varnothing$ is $O_{p,r,R}(n^{R-1})$. This is because, first choose the position of $\bm x$
among the $R$ progressions, a point of $B$, and a coordinate
$t\in[r]\setminus\{\hat s\}$. Since the $\hat s$-th coordinate
of $\bm x$ is fixed, these choices determine its common difference,
if such an integer common difference exists. The remaining $R-1$
progressions have $O(n^{R-1})$ choices. Similarly, the number of $R$-tuples containing a pair $(\bm x,\bm x')$ such that $D(\bm x)\cap D(\bm x')\neq\varnothing
$ is $O_{p,r,R}(n^{R-1})$. Indeed, choose the two positions of
$\bm x,\bm x'$ among the $R$ progressions and coordinates
$t,t'\in[r]\setminus\{\hat s\}$ at which they intersect.
After choosing the common difference of $\bm x$ in $O(n)$ ways,
the common difference of $\bm x'$ is determined by the equality
of the two selected coordinates. The remaining $R-2$ progressions
have $O(n^{R-2})$ choices. Therefore, the number of terms contributing to $T_1$ is
\begin{align}
n^R\prod_{u=1}^p f_n(i_u)^{s_u}
+O_{p,r,R}(n^{R-1}).
\label{eq:disjoint}
\end{align} 
For every term in $T_1$, the corresponding events are independent
conditional on $\cS_n=B$, since they depend on disjoint subsets of
$[n]\setminus B$. Hence, by \eqref{eq:conditionB},
\begin{align*}
\P\left(
\bigcap_{u=1}^p\bigcap_{\ell=1}^{s_u}
\left\{
\bm x^{(u)}_\ell\in\cX_{r,n}^{(i_u)}(\bm a)
\right\}
\bigg|\cS_n=B
\right)  =
\frac{1}{n^R}
\left(1-\frac{1}{n}\right)^{-R(r-1)}
=
\frac{1}{n^R}
+O_{r,R}\left(\frac{1}{n^{R+1}}\right).
\end{align*}
Combining this with \eqref{eq:disjoint} gives
\begin{align}
T_1
=
\prod_{u=1}^p f_n(i_u)^{s_u}
+O_{p,r,R}(n^{-1}).
\label{eq:factorialdisjoint}
\end{align}

We now analyze $T_2$. Any tuple containing a progression $\bm x$
with $D(\bm x)\cap B\neq\varnothing$ has conditional probability zero.
Thus, it suffices to consider tuples in $T_2$ for which all the sets
$D(\bm x)$ avoid $B$. Now, for $\bm X\in\cA_{\bm s}(B)$, define its intersection graph
$G_{\bm X}$ as follows. The vertices of $G_{\bm X}$ are the $R$
progressions in $\bm X$, and two distinct vertices $\bm x,\bm x'$
are adjacent if $D(\bm x)\cap D(\bm x')\neq\varnothing$. Denote by $\nu(G_{\bm X})$ the number of connected components of
$G_{\bm X}$ and by $\nu_1(G_{\bm X})$ the number of connected
components containing at least one edge. For every nonzero term in
$T_2$, we have $\nu_1(G_{\bm X})\geq1$. Now, fix a possible labeled intersection graph $G$ on the $R$ progressions, and let $F$ be a spanning forest of $G$. For every edge of
$F$, connecting say $\bm x$ and $\bm x'$, choose a pair of coordinates
$t,t'\in[r]\setminus\{\hat s\}$ witnessing an intersection between
$D(\bm x)$ and $D(\bm x')$. There are only $O_r(1)$ choices for these
coordinates. Choose one root in each connected component of $F$. The common
difference of each root progression can be chosen in $O(n)$ ways.
Once the common difference of a parent progression is fixed, the
chosen intersection relation along an edge of $F$ uniquely determines
the common difference of the child progression, if a valid integer
common difference exists. Therefore, for every fixed graph $G$ and
fixed choices of witnessing coordinates, the number of corresponding
$R$-tuples is 
\begin{align}\label{eq:graphF}
O_{p,r,R}(n^{\nu(G)}). 
\end{align}
Since $R$ is fixed, there are only finitely many possible graphs and
choices of witnessing coordinates. Hence, it remains to bound the probability associated with a fixed tuple $\bm X$. To  this end, let $F_0$ be a connected component of $G_{\bm X}$. Now, we consider the following two cases: 

\begin{itemize} 

\item If $F_0$
consists of a single progression $\bm x$, then by
\eqref{eq:conditionB},
\begin{equation}
\P(\bm x\in\cX_{r,n}^{(i)}(\bm a) |\cS_n=B ) \lesssim_r n^{-1}.
\label{eq:vertex}
\end{equation}

\item Next, suppose $F_0$ contains at least one edge. Choose two adjacent
distinct progressions $\bm x,\bm x'$ in $F_0$. If their color
requirements are incompatible at any point of
$D(\bm x)\cap D(\bm x')$, then
$$
\P\left(
\bm x,\bm x'
\text{ both satisfy their color requirements}
|\cS_n=B
\right)=0.
$$
Otherwise, the color requirements are compatible at every common
point. Since $\bm x\neq\bm x'$, the sets $D(\bm x)$ and $D(\bm x')$ cannot
be identical: the $r-1\geq2$ coordinates in $D(\bm x)$ determine the
underlying $r$-AP uniquely. Hence, there exists at least one point
$j\in D(\bm x')\setminus D(\bm x).$ Conditional on the event that $\bm x$ satisfies its color requirements, the color at $j$ is still independent of all colors in $D(\bm x)$. Therefore, by \eqref{eq:phiB},
\begin{align*}
\P (\bm x'\in\cX_{r,n}^{(i_v)}(\bm a) | \bm x\in\cX_{r,n}^{(i_u)}(\bm a), 
\cS_n=B) \lesssim_r n^{-\theta_\bullet}.
\end{align*}
Together with \eqref{eq:vertex}, this gives
\begin{align}
&\P\left(
\bm x\in\cX_{r,n}^{(i_u)}(\bm a), 
\bm x'\in\cX_{r,n}^{(i_v)}(\bm a)
|\cS_n=B
\right)
\lesssim_r
n^{-1-\theta_\bullet}.
\label{eq:edgecomponent}
\end{align}
Since the event that all progressions in $F_0$ satisfy their color
requirements is contained in the event appearing in
\eqref{eq:edgecomponent}, the same upper bound holds for the entire
component $F_0$.
\end{itemize}
Finally, distinct connected components of $G_{\bm X}$ involve disjoint
sets of constrained sites. Hence, conditional on $\cS_n=B$, the
corresponding events are independent. It follows from
\eqref{eq:vertex} and \eqref{eq:edgecomponent} that
\begin{align}
\P\left(
\bigcap_{u=1}^p\bigcap_{\ell=1}^{s_u}
\left\{
\bm x^{(u)}_\ell\in\cX_{r,n}^{(i_u)}(\bm a)
\right\} \bigg|\cS_n=B \right) & \lesssim_{p,r,R}
n^{-\nu(G_{\bm X})-\nu_1(G_{\bm X})\theta_\bullet}.
\label{eq:C}
\end{align}
Combining \eqref{eq:graphF} and \eqref{eq:C}, the contribution of all tuples with
any fixed intersection pattern is at most
\begin{align*}
n^{\nu(G_{\bm X})}
n^{-\nu(G_{\bm X})-\nu_1(G_{\bm X})\theta_\bullet} = n^{-\nu_1(G_{\bm X})\theta_\bullet} \leq n^{-\theta_\bullet},
\end{align*}  
since $\nu_1(G_{\bm X})\geq1$. There are $O_{p,r,R}(1)$ choices of  the  possible intersection patterns, 
\begin{equation}
T_2
\lesssim_{p,r,R}
n^{-\theta_\bullet}.
\label{eq:cross}
\end{equation}
Combining \eqref{eq:factorialdisjoint} and \eqref{eq:cross}, the result in Lemma \ref{lm:s} follows. 
\end{proof}

With the above lemma, we can now complete the proof of Theorem \ref{thm:intersectiondistribution}. Let 
\begin{equation*}
\bm X_{r,n}^{\cS_n}(\bm a) := \bigl(X_{r,n}^{(i)}(\bm a)\bigr)_{i\in \cS_n}.
\end{equation*}
Conditional on $\cS_n$, let
$\bm Y_n' :=(Y_i')_{i\in \cS_n}$, where $Y_i' \mid \cS_n\sim\mathrm{Pois}(f_n(i))$,
independently for $i\in \cS_n$. Also, conditional on $\cS_n$, let 
$\bm Y_n :=(Y_i)_{i\in \cS_n}$, where $Y_i\mid \cS_n
\sim \mathrm{Pois}\left(\lambda_{\bm a}\left(\frac{i}{n}\right)\right)$,
independently for $i\in \cS_n$. Using the bound $\mathrm{TV}(\mathrm{Pois}(\lambda),\mathrm{Pois}(\lambda')) \leq |\lambda-\lambda'| $ and the tensorization property of Total Variation distance, conditional on $\cS_n$, 
\begin{align*}
\mathrm{TV}\left( (\bm Y_n'\mid\cS_n), (\bm Y_n\mid\cS_n)
\right) \leq \sum_{i\in\cS_n} \left|
f_n(i)-\lambda_{\bm a}\left(\frac{i}{n}\right)
\right| \lesssim_r \frac{|\cS_n|}{n}.
\end{align*}
Since $\E|\cS_n|=1$, it follows that, as $n \rightarrow \infty$,  
\begin{align}
\mathrm{TV}\left( (\cS_n,\bm Y_n'), (\cS_n,\bm Y_n)
\right) \rightarrow 0.
\label{eq:YZ}
\end{align}

Next, we control the Total Variation distance between $\bm X_{r,n}^{\cS_n}(\bm a)$ and $\bm Y_n'$. 

\begin{lemma} For $\bm X_{r,n}^{\cS_n}(\bm a)$ and $\bm Y_n'$ as defined above, 
\begin{align}
\mathrm{TV} ( (\cS_n, \bm X_{r,n}^{\cS_n}(\bm a)),  (\cS_n,\bm Y_n')  )  \rightarrow 0  . 
\label{eq:SY}
\end{align}
\end{lemma}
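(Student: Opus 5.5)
Since both pairs share the same first coordinate $\cS_n$, the total variation distance decomposes as
\[
\mathrm{TV}\bigl((\cS_n,\bm X_{r,n}^{\cS_n}(\bm a)),(\cS_n,\bm Y_n')\bigr)=\E\Bigl[\mathrm{TV}\bigl((\bm X_{r,n}^{\cS_n}(\bm a)\mid\cS_n),(\bm Y_n'\mid\cS_n)\bigr)\Bigr],
\]
that is, an average over $B$ of the conditional TV distances given $\cS_n=B$. Each conditional TV is at most $1$. Moreover, $|\cS_n|\sim\mathrm{Bin}(n,1/n)$ is tight, so for any $\varepsilon>0$ we can fix $K$ with $\P(|\cS_n|>K)<\varepsilon$. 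It therefore suffices to show that
\[
\max_{B\subseteq[n],\,|B|\le K}\mathrm{TV}\bigl((\bm X_{r,n}^{B}(\bm a)\mid \cS_n=B),\ \textstyle\bigotimes_{i\in B}\mathrm{Pois}(f_n(i))\bigr)\to 0
\]
for each fixed $K$, which we will do uniformly over $B$ with $|B|=p\le K$.

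Fix such a $B=\{i_1,\ldots,i_p\}$. By Lemma~\ref{lm:s}, the conditional mixed factorial moments of $(X^{(i_1)}_{r,n}(\bm a),\ldots,X^{(i_p)}_{r,n}(\bm a))$ of every order $R$ are within $O_{p,r,R}(n^{-\theta_\bullet})$ of $\prod_u f_n(i_u)^{s_u}$. These products are exactly the mixed factorial moments of independent $\mathrm{Pois}(f_n(i_u))$ variables, and the error bound is uniform in $B$ because the constants depend only on $p,r,R$. Since $f_n(i)\le 1/(r-1)$, all the target means are bounded.

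To convert moment closeness into TV closeness uniformly over $B$, we will use Bonferroni / inclusion--exclusion. For a vector $\bm k=(k_1,\ldots,k_p)$ and a random vector $\bm Z$ on $\mathbb Z_{\ge0}^p$,
\[
\P(\bm Z=\bm k)=\sum_{\bm m\ge \bm k}(-1)^{|\bm m|-|\bm k|}\prod_u\binom{m_u}{k_u}\frac{\E[\prod_u (Z_u)_{m_u}]}{\prod_u m_u!},
\]
and truncating at $|\bm m|\le M$ gives alternating upper and lower bounds. For the Poisson target, the tail of this series beyond $M$ is at most $\sum_{|\bm m|>M}\prod_u \lambda_u^{m_u}/m_u!$ with $\lambda_u\le 1/(r-1)$, which is small uniformly in $B$ once $M$ is large. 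The truncated sums for $\bm X$ and for $\bm Y'$ differ by $O_{K,r,M}(n^{-\theta_\bullet})$. The Bonferroni sandwich then gives $|\P(\bm X=\bm k\mid B)-\P(\bm Y'=\bm k\mid B)|\le \delta_M+O(n^{-\theta_\bullet})$ for each fixed $\bm k$.

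Summing over $\bm k$ with $\max_u k_u\le L$ and controlling the tails separately will finish the argument. The Poisson tail $\P(\max_u Y'_u>L)$ is small uniformly in $B$. For $\bm X$, the tail is handled by Markov's inequality on the first factorial moment, $\E[X^{(i)}\mid B]\le f_n(i)+O(n^{-\theta_\bullet})$, so $\P(\max_u X^{(i_u)}>L\mid B)\lesssim K/L$. Choosing first $\varepsilon$, then $K$, then $L$, then $M$, and finally letting $n\to\infty$ yields \eqref{eq:SY}.

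The main obstacle is the uniformity over $B$ in the moment-to-distribution step: the standard method-of-moments theorem is not uniform. This is why we use the explicit Bonferroni inequalities, whose errors depend only on $K,r,M$ and the uniform bound $f_n\le 1/(r-1)$. A subsequence argument would be an alternative, but the explicit route avoids compactness issues over varying index sets.
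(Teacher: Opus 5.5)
Your proof is correct. Its skeleton matches the paper's: condition on $\cS_n$, restrict to bounded $|\cS_n|$ using tightness of $\mathrm{Bin}(n,1/n)$, and apply Lemma~\ref{lm:s}, whose constants depend only on $(p,r,R)$ and so are uniform in $B$. The genuine difference is in the moment-to-TV step.

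The paper packages that step as the appendix Lemma~\ref{facMomentTVbd} and proves it softly, by contradiction:
\begin{itemize}
\item it passes to subsequences along which $|S_t|$ is fixed, the intensities converge, and $\mu_t$ converges weakly;
\item it transfers the factorial moments to the limit by uniform integrability;
\item it identifies the limit as product Poisson via the generating function and $\E[2^T]=e^{\Lambda}$.
\end{itemize}
Your Bonferroni sandwich avoids compactness and any identification of a limit law. It gives an explicit bound of the form $(L+1)^K\bigl(\delta_M+C_{K,r,M}n^{-\theta_\bullet}\bigr)+O(K/L)+\P(\max_u Y'_u>L)$. In principle this could yield a rate, by letting $L,M$ grow slowly with $n$, provided the constants in Lemma~\ref{lm:s} are tracked. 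The paper's route instead gives a reusable statement that needs only $\sum_x\lambda_x\le K$ and no alternating-sign structure.

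There are two details you should write out.

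First, the alternating property for truncation by total degree $|\bm m|\le M$ is not the textbook univariate statement, but it reduces to it. For $\bm z\ge\bm k$ (otherwise every term vanishes), use $\binom{m_u}{k_u}\binom{z_u}{m_u}=\binom{z_u}{k_u}\binom{z_u-k_u}{m_u-k_u}$ together with Vandermonde's identity $\sum_{|\bm j|=J}\prod_u\binom{N_u}{j_u}=\binom{N}{J}$. These collapse the truncated sum to $\prod_u\binom{z_u}{k_u}\sum_{J=0}^{t}(-1)^J\binom{N}{J}$, where $N=|\bm z|-|\bm k|$ and $t=M-|\bm k|$. Its difference from $\bm 1\{\bm z=\bm k\}$ is $(-1)^t\prod_u\binom{z_u}{k_u}\binom{N-1}{t}\bm 1\{N\ge 1\}$, which has sign $(-1)^t$, as required.

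Second, the Poisson tail you wrote omits the factors $\binom{m_u}{k_u}$, so ``at most'' is not correct as stated. The correct tail is $\prod_u\frac{\lambda_u^{k_u}}{k_u!}\sum_{J>M-|\bm k|}\frac{\Lambda^J}{J!}$, with $\Lambda=\sum_u\lambda_u\le K/(r-1)$. This is still uniformly small, even after summing over $\bm k$ with $\max_u k_u\le L$, once $M$ is large compared with $KL$. That is consistent with your order of choosing $M$ after $L$, so the conclusion stands.
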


\begin{proof} For fixed $M \geq 1$, define 
\begin{equation*}
  \Delta_{n,M}  :=  \sup_{ \substack{ \bm s = (s_k)_{k \in \cS_n} \in \N_0^{\cS_n}  \\ 
  \sum_{k \in \cS_n} s_k   \leq M } }  \left|  \mathbb E\left[\prod_{i \in \cS_n} (X_{r,n}^{(i)}(\bm a))_{s_i}  | \cS_n \right] -
        \prod_{i \in \cS_n}f_n(i)^{s_i} \right|  .  
    \end{equation*}  
   Also, define 
   $$\Lambda_n: = \sum_{i\in\cS_n}f_n(i).$$ Note that $|\cS_n|\sim\mathrm{Bin}(n, \frac{1}{n})$, hence, the sequence $\{|\cS_n|\}_{n\geq1}$ is tight.  Also, since $\Lambda_n \leq |\cS_n|$, the sequence $\{ \Lambda_n \}_{n\geq1}$ is tight. Hence, for $\varepsilon >0$, by tightness, there exist $L \geq 1$ and $K<\infty$ such that $\mathbb P(| \cS_n|>L)< \varepsilon$ and  $\mathbb P(\Lambda_n>K) < \varepsilon$, for all sufficiently large $n$. Apply Lemma~\ref{facMomentTVbd} with this $L,K$, and $\varepsilon$. This gives $M\geq 1$ and $\delta>0$ such that, on the event
\[
\{| \cS_n|\leq L\}\cap\{\Lambda_n\leq K\}
\cap\{\Delta_{n,M}\leq\delta\},
\]
we have $\mathrm{TV}( \bm X_{r,n}^{\cS_n}(\bm a) \mid  \cS_n,  \bm  Y_n' \mid  \cS_n) \leq\varepsilon$. Hence, 
\begin{align*}
\mathbb P\left( \mathrm{TV}\left( \bm X_{r,n}^{\cS_n}(\bm a) \mid  \cS_n,
  \bm  Y_n' \mid  \cS_n  \right)>\varepsilon
\right) \leq \mathbb P(| \cS_n|>L) +\mathbb P(\Lambda_n>K) +\mathbb P(\Delta_{n,M}>\delta).
\end{align*}
Letting $n\to\infty$ followed by $\varepsilon \rightarrow 0$ and applying Lemma \ref{lm:s} gives, 
\begin{equation}
\mathrm{TV}\left( \bm X_{r,n}^{\cS_n}(\bm a) \mid \cS_n, \bm  Y_n' \mid \cS_n \right)
\xrightarrow{P}0.
\label{eq:YZSn}
\end{equation}
To now derive the unconditional statement,  suppose $A$ is a measurable set in the common sample space. Then
\begin{align*}
\left| \mathbb P\bigl(( \cS_n,\bm X_{r,n}^{\cS_n}(\bm a)) \in A\bigr) - \mathbb P\bigl(( \cS_n,\bm  Y_n')\in A\bigr) \right| & = \left| \E \left[ \mathbb P\bigl(( \cS_n,\bm X_{r,n}^{\cS_n}(\bm a)) \in A | \cS_n \bigr) - \mathbb P\bigl(( \cS_n,\bm  Y_n')\in A | \cS_n \bigr) \right] \right| \nonumber \\ 
& \leq \mathbb E\left[ \mathrm{TV}\left(   \bm X_{r,n}^{\cS_n}(\bm a) \mid  \cS_n ,   \bm  Y_n'\mid  \cS_n \right) \right].
\end{align*}
Note that taking the supremum over $A$ in the LHS gives the Total Variation distance. Further, the RHS converges to zero by \eqref{eq:YZSn} and since the conditional Total Variation distance is bounded by $1$. This proves \eqref{eq:SY}.  
\end{proof}

Combining \eqref{eq:YZ} and \eqref{eq:SY} gives, $\mathrm{TV}( (\cS_n,\bm X_{r,n}^{\cS_n}(\bm a) ), (\cS_n, \bm Y_n)) \rightarrow 0$. Note that the Total Variation distance cannot increase under a measurable map. Hence, applying the summation map and recalling \eqref{eq:XrnSn}, we obtain 
 \begin{align}\label{eq:XrnTV}
 \mathrm{TV}\left( X_{r,n}(\bm a) , \sum_{i\in \cS_n}Y_{i} \right) \rightarrow 0. 
 \end{align} 
 Now, define the random finite measure $ \Xi_n = \sum_{i\in\cS_n}Y_i \delta_{\frac{i}{n}}$. By Lemma~\ref{lem:comPoisson},
$$\Xi_n \xrightarrow{D} \Xi:=\sum_{x\in\mathcal P}  N_x\delta_x,$$
where $\mathcal P$ and $\{ N_x\}_{x\in\mathcal P}$ are as defined in Theorem \ref{thm:intersectiondistribution}. Then, by the continuous mapping theorem, $\sum_{i\in\cS_n}Y_i \stackrel{D} \rightarrow \sum_{x\in\mathcal P} N_x$.
 Together with \eqref{eq:XrnTV}, this proves the result in Theorem \ref{thm:intersectiondistribution}.  \hfill $\Box$

\section{Proof of Theorem \ref{thm:cubedistribution}}
\label{sec:cubedistributionpf}

To prove Theorem \ref{thm:cubedistribution}, we first compute the conditional
mean and variance of the random variables $X_{r,n}^{(i)}(\bm a)$ (recall \eqref{eq:Xrnl}).

\begin{lemma}
\label{lem:pinned-concentration} 
Suppose the assumptions of Theorem \ref{thm:cubedistribution} hold. Let $B\subseteq[n]$ be a finite set and suppose $i\in B$. Then
\begin{align}
\E\left[
X_{r,n}^{(i)}(\bm a)
 | 
\cS_n=B
\right]
&=
n^{2 - \sum_{a\in S(\bm a)}w_a\theta_a}f_n(i)
+
O_{r,|B|}\left(n^{1 - \sum_{a\in S(\bm a)}w_a\theta_a}\right),
\label{eq:pinned-mean}
\end{align}
and
\begin{equation}
\Var\left[ X_{r,n}^{(i)}(\bm a) | \cS_n=B \right] \lesssim_r n^{2 - \sum_{a\in S(\bm a)}w_a\theta_a}.
\label{eq:pinned-var}
\end{equation}
Consequently, for $i\in B$, 
$$\frac{X_{r,n}^{(i)}(\bm a)}{n^{2 - \sum_{a\in S(\bm a)}w_a\theta_a}} - f_n(i)
\xrightarrow{P}0,$$ 
conditional on $\cS_n=B$.
\end{lemma}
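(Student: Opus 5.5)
The plan is to mirror the conditional computation in the proof of Lemma \ref{lm:s}, restricted to first and second moments. Write $\Lambda:=\sum_{a\in S(\bm a)}w_a\theta_a<2$, so that $\sum_{t\neq\hat s}\theta_{a_t}=\Lambda-1<1$. Conditional on $\cS_n=B$, the colors on $[n]\setminus B$ are independent with the laws in \eqref{eq:phiB}. Hence for $\bm x\in\cX^{(i)}_{r,n}$ with $D(\bm x)\cap B=\varnothing$ (recall \eqref{eq:D}),
\[
\P\bigl(\bm x\in\cX^{(i)}_{r,n}(\bm a)\mid\cS_n=B\bigr)=n^{-(\Lambda-1)}\bigl(1-\tfrac1n\bigr)^{-(r-1)}.
\]
If $D(\bm x)\cap B\neq\varnothing$, this probability is $0$, because $a_t\neq a_{\hat s}$ for $t\neq\hat s$; this uses uniqueness of the rare coordinate, which holds since $w_{a_{\hat s}}=1$. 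The number of $\bm x\in\cX^{(i)}_{r,n}$ with $D(\bm x)\cap B\neq\varnothing$ is $O_r(|B|)$: a point of $B$ and a coordinate $t\neq\hat s$ determine $d$. Combining this with \eqref{eq:Xrnlocation}, which gives $|\cX^{(i)}_{r,n}|=nf_n(i)+O(1)$, we get
\[
\E[X^{(i)}\mid\cS_n=B]=\bigl(nf_n(i)+O_{r,|B|}(1)\bigr)\,n^{1-\Lambda}\bigl(1+O(1/n)\bigr),
\]
which is \eqref{eq:pinned-mean}.

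For the variance, expand into diagonal and off-diagonal terms, as in Lemma \ref{lm:expectationvariance}. The diagonal contributes at most the mean, which is $\lesssim_r n^{2-\Lambda}$. For distinct $\bm x,\bm x'\in\cX^{(i)}_{r,n}$ with $D(\bm x)\cap D(\bm x')=\varnothing$, the events are conditionally independent given $\cS_n=B$, so the covariance vanishes. The key observation concerns pairs with $D(\bm x)\cap D(\bm x')\neq\varnothing$. Both progressions share the pinned $\hat s$-th term $i$, so one further common point forces them to share two points, and then a witnessing pair of coordinates $(t,t')$ determines $d'$ from $d$. Thus the number of such pairs is $O_r(n)$, not $O(n^2)$. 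For each such pair, compatibility together with $\bm x\neq\bm x'$ leaves a point of $D(\bm x')\setminus D(\bm x)$, as in \eqref{eq:edgecomponent}. Hence the joint probability is at most
\[
n^{-(\Lambda-1)}\,n^{-\theta_\bullet}\le n^{1-\Lambda}.
\]
The total off-diagonal contribution is therefore $\lesssim_r n\cdot n^{1-\Lambda}=n^{2-\Lambda}$, which gives \eqref{eq:pinned-var}.

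The consequence follows from Chebyshev's inequality applied conditionally on $\cS_n=B$. Dividing by $n^{2-\Lambda}$, the normalized variance is $O(n^{-(2-\Lambda)})\to0$ since $\Lambda<2$. By \eqref{eq:pinned-mean}, the normalized mean is $f_n(i)+O(1/n)$, so $X^{(i)}/n^{2-\Lambda}-f_n(i)\to0$ in conditional probability.

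The only delicate point is the counting of intersecting pairs. One must use the pinned coordinate $i$ to see that intersections inside $D$ cost a full factor of $n$. Without this, the naive bound $n^{3-2\Lambda+\theta_{\max}}$ from Lemma \ref{lm:expectationvariance} would not be small relative to the mean squared.
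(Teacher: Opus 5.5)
Your proposal is correct and follows the paper's proof essentially step for step. The mean computation is the same: exact conditional probability $n^{1-\Lambda}(1-\tfrac1n)^{-(r-1)}$ with $O_r(|B|)$ exceptional progressions. The variance bound uses the same counting idea: pinning at $i$ forces $d'$ to be determined by $d$ and a witnessing pair $(t,t')$, so there are only $O_r(n)$ intersecting pairs, each contributing $\lesssim_r n^{1-\Lambda}$. The conclusion is the same conditional Chebyshev step. Your extra factor $n^{-\theta_\bullet}$ in the joint probability is valid but unnecessary; the paper simply bounds each such covariance by the marginal probabilities.
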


\begin{proof}
Recall that, for $\bm x=(x,x+d,\ldots,x+(r-1)d)\in\cX_{r,n}^{(i)}$, we write $D(\bm x) =
\{x+(t-1)d:t\in[r]\setminus\{\hat s\}\}$. Hence, if $D(\bm x)\cap B=\varnothing$, then by arguments similar to \eqref{eq:conditionB},
\begin{align}
\P\left(\bm x\in\cX_{r,n}^{(i)}(\bm a)  |  \cS_n=B \right) = n^{1 - \sum_{a\in S(\bm a)}w_a\theta_a} \left(1-\frac1n\right)^{-(r-1)}.
\label{eq:xrnconditional}
\end{align}
On the other hand, if $D(\bm x)\cap B\neq\varnothing$, then the conditional probability is zero. Further, for every $j\in B$ and every $t\in[r]\setminus\{\hat s\}$, there is at most one progression $\bm x\in\cX_{r,n}^{(i)}$ such that $x+(t-1)d=j$, because its $\hat s$-th coordinate is already fixed at $i$. Therefore,
\begin{equation}
\left|
\left\{
\bm x\in\cX_{r,n}^{(i)}
:
D(\bm x)\cap B\neq\varnothing
\right\}
\right|
\lesssim_r |B|.
\label{eq:bad-pinned}
\end{equation}
Using \eqref{eq:xrnconditional}, \eqref{eq:bad-pinned}, and $|\cX_{r,n}^{(i)}| = nf_n(i)+O_r(1)$ (which follows from \eqref{eq:Xrnlocation} and \eqref{eq:fn}), we obtain
\begin{align*}
\E\left[ X_{r,n}^{(i)}(\bm a) |  \cS_n=B \right] & = \left( |\cX_{r,n}^{(i)}| + O_r(|B|) \right) n^{1 - \sum_{a\in S(\bm a)}w_a\theta_a} \left(1-\frac1n\right)^{-(r-1)}  \nonumber  \\  
  & = n^{2 - \sum_{a\in S(\bm a)}w_a\theta_a}f_n(i) + O_{r,|B|}\left(n^{1 - \sum_{a\in S(\bm a)}w_a\theta_a}\right),
\end{align*}
which proves \eqref{eq:pinned-mean}.  

We now bound the variance. Recalling \eqref{eq:Xrnl}, we decompose the $\Var[ X_{r,n}^{(i)}(\bm a) ]$ into variance and covariance terms. Note that if $D(\bm x)\cap D(\bm x')=\varnothing$, then $ \bm 1\{\bm x\in\cX_{r,n}^{(i)}(\bm a)  \}  $ and $ \bm 1\{\bm x' \in\cX_{r,n}^{(i)}(\bm a)  \}  $ are independent conditional on $\cS_n=B$. Moreover, for
each fixed $\bm x\in\cX_{r,n}^{(i)}$, there are only $O_r(1)$ choices
of $\bm x'\in\cX_{r,n}^{(i)}$ such that $D(\bm x)\cap D(\bm x')\neq\varnothing$. This is because, if the common point occurs at position $t$ of $\bm x$ and position $t'$ of $\bm x'$, where
$t,t'\in[r]\setminus\{\hat s\}$, then $(t-\hat s)d=(t'-\hat s)d'$, and hence $d'$ is uniquely determined by $d,t$, and $t'$. For such a pair, using \eqref{eq:xrnconditional},
\begin{align*}
& \left| \Cov[ \bm 1\{\bm x\in\cX_{r,n}^{(i)}(\bm a)  \} , \bm 1\{\bm x' \in\cX_{r,n}^{(i)}(\bm a)  \} \mid\cS_n=B ] \right|  \nonumber \\ 
&\leq | \P( \bm x , \bm x' \in \cX_{r,n}^{(i)}(\bm a)  \mid\cS_n=B  )  +  \P( \bm x  \in\cX_{r,n}^{(i)}(\bm a) \mid\cS_n=B) \P( \bm x' \in\cX_{r,n}^{(i)}(\bm a) \mid\cS_n=B  )  \\  
&\lesssim_r
n^{1 - \sum_{a\in S(\bm a)}w_a\theta_a}  .  
\end{align*}
Since $|\cX_{r,n}^{(i)}|=O_r(n)$, this proves \eqref{eq:pinned-var}.  

Hence, 
\begin{align*}
\Var\left[ \frac{X_{r,n}^{(i)}(\bm a)}
{n^{2 - \sum_{a\in S(\bm a)}w_a\theta_a}} |  \cS_n=B
\right] \lesssim_r n^{ - (2 - \sum_{a\in S(\bm a)}w_a\theta_a) }  \ll  1  ,  
\end{align*}
since $\sum_{a\in S(\bm a)}w_a\theta_a < 2$. Also, by \eqref{eq:pinned-mean},
\[
\frac{
\E[X_{r,n}^{(i)}(\bm a)\mid\cS_n=B]
}
{n^{2 - \sum_{a\in S(\bm a)}w_a\theta_a}}
=
f_n(i)+O_{r,|B|}\left(\frac1n\right).
\]
The result now follows from Chebyshev's inequality.
\end{proof}

We can now proceed to prove Theorem \ref{thm:cubedistribution}. Recall from \eqref{eq:XrnSn} that $X_{r,n}(\bm a) = \sum_{i\in\cS_n}X_{r,n}^{(i)}(\bm a)$. Since $|\cS_n| \sim \mathrm{Bin}\left(n,\frac1n\right)$, the sequence $\{|\cS_n|\}_{n\geq1}$ is tight. We first show that
\begin{equation}
\frac{X_{r,n}(\bm a)}
{n^{2 - \sum_{a\in S(\bm a)}w_a\theta_a}} - \sum_{i\in\cS_n}f_n(i)
\xrightarrow{P}0.
\label{eq:Xrnintersection}
\end{equation}
By the tightness of $|\cS_n|$, for $\varepsilon > 0$, there exists $L\geq1$ such that $\P(|\cS_n|>L)< \varepsilon$, for all sufficiently large $n$. Conditional on $\cS_n=B$ with $|B|\leq L$, define
$$Z_{n}^{(i)} := \frac{X_{r,n}^{(i)}(\bm a)}
{n^{2 - \sum_{a\in S(\bm a)}w_a\theta_a}} - f_n(i), $$
for $i\in B$. By Lemma \ref{lem:pinned-concentration} and the Cauchy-Schwarz inequality,
\begin{align*}
\E\left[ \left( \sum_{i\in B}Z_{n}^{(i)} \right)^2 \big| \cS_n=B \right] &\leq |B|\sum_{i\in B}
\E\left[\left(Z_{n}^{(i)}\right)^2 \big| \cS_n=B \right] \\ 
&\lesssim_{r,L}
n^{ - (2 - \sum_{a\in S(\bm a)}w_a\theta_a) } + \frac1{n^2}  .  
\end{align*}
Since $\sum_{a\in S(\bm a)}w_a\theta_a < 2$, the RHS tends to zero uniformly over all $B$ with $|B|\leq L$. Therefore, for any $\eta > 0$, as $n \rightarrow \infty$, 
$$\P\left( \left| \frac{X_{r,n}(\bm a)} {n^{2 - \sum_{a\in S(\bm a)}w_a\theta_a}} - \sum_{i\in\cS_n}f_n(i)
\right|> \eta,  |\cS_n|\leq L \right) \rightarrow 0.$$
Together with $\P(|\cS_n|>L)< \varepsilon$, and taking limits as $n \rightarrow \infty$ and then $\varepsilon \rightarrow 0$, gives the result in \eqref{eq:Xrnintersection}.  

It remains to determine the limiting distribution of $\sum_{i\in\cS_n}f_n(i)$.
Note that, by the definitions of $f_n$ (recall \eqref{eq:fn}) and $\lambda_{\bm a}$ (as in \eqref{eq:lambda}),  
\begin{align}
\left| \sum_{i\in\cS_n}f_n(i)  -  \sum_{i\in\cS_n} \lambda_{\bm a}\left(\frac{i}{n}\right)
\right| &\lesssim_r \frac{|\cS_n|}{n} \xrightarrow{P}0,
\label{eq:f-lambda-sum}
\end{align}
again using the tightness of $|\cS_n|$. Now,  define the point process of rare-color locations by
\[
\Pi_n
:=
\sum_{i\in\cS_n}\delta_{\frac{i}{n}}
=
\sum_{i=1}^n
\bm 1\{\phi_n(i)=a_{\hat s}\}\delta_{\frac{i}{n}}.
\]
Since the indicators $\{ \bm 1\{\phi_n(i)=a_{\hat s}\}\}_{i\in[n]}$, are independent $\operatorname{Ber}(\frac{1}{n})$ random variables, the
standard Poisson point process limit gives
\begin{equation}  
\Pi_n
\xrightarrow{D}
\Pi
:=
\sum_{x\in\cP}\delta_x,
\label{eq:intersectionprocess}
\end{equation}
where $\cP$ is a Poisson point process of rate $1$ on $[0,1]$. Hence, by the continuous
mapping theorem and \eqref{eq:intersectionprocess},
\begin{align}
\sum_{i\in\cS_n} \lambda_{\bm a}\left(\frac{i}{n}\right) = \int_{[0,1]} \lambda_{\bm a}(x) \mathrm d\Pi_n(x) \xrightarrow{D} \int_{[0,1]} \lambda_{\bm a}(x) \mathrm d\Pi(x) = \sum_{x\in\cP}\lambda_{\bm a}(x).
\label{eq:lambdaprocess}
\end{align}
Combining \eqref{eq:f-lambda-sum} and \eqref{eq:lambdaprocess} gives
\begin{equation*}
\sum_{i\in\cS_n}f_n(i)
\xrightarrow{D}
\sum_{x\in\cP}\lambda_{\bm a}(x).
\end{equation*}
This, together with \eqref{eq:Xrnintersection} proves the result in Theorem \ref{thm:cubedistribution}.  \hfill $\Box$

\small 

\subsection*{Acknowledgements} BBB was supported by NSF CAREER grant DMS 2046393  and the National University of Singapore's PESS and Provost's Chair funds.

\subsection*{AI disclosure} ChatGPT 5.6 Sol was used to verify some of the mathematical arguments, refine grammar and typography, and prepare the figures presented in this paper.  The research themes and results were derived by the authors, who assume full responsibility for all content.

\bibliographystyle{plainnat} 
\bibliography{bibliography}

\normalsize

\appendix

\section{ Technical Lemmas }
\label{sec:appendix}

In this section we collect a few technical lemmas about Poisson convergence. Throughout, given $\bm \alpha=(\alpha_x)_{x\in S}\in\mathbb N_0^S$, for a finite set $S$, we denote $|\bm \alpha|:=\sum_{x\in S}\alpha_x$.

\begin{lemma}
\label{facMomentTVbd}
Fix an integer $L\geq 1$ and $K<\infty$. For every $\varepsilon>0$, there exist an integer $M\geq1$ and a number $\delta>0$ such that the following holds. Suppose $S$ is a finite set, with $|S|\leq L$, and $\bm\lambda=(\lambda_x)_{x\in S}$ is such that $\lambda_x\geq0$ and $\sum_{x\in S}\lambda_x\leq K$. Let $\nu_{\bm\lambda} = \bigotimes_{x\in S}\operatorname{Pois}(\lambda_x)$
be the law of independent Poisson random variables with means $(\lambda_x)_{x\in S}$. Suppose $\mu$ is a probability measure on
$\mathbb N_0^S$ such that
\begin{equation}
\sup_{\substack{\bm\alpha\in\mathbb N_0^S\\1\leq|\bm\alpha|\leq M}}
\left|
 \int \prod_{x\in S}(z_x)_{\alpha_x} \mathrm d\mu(\bm z)
 -\prod_{x\in S}\lambda_x^{\alpha_x}
\right|
\leq\delta.
\label{eq:sz}
\end{equation}
Then $\mathrm{TV}(\mu,\nu_{\bm\lambda})\leq\varepsilon$.

\end{lemma}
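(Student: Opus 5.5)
We argue by contradiction combined with compactness, since the statement is a uniform method-of-moments statement for multivariate Poisson laws. Suppose the claim fails for some $\varepsilon>0$. Then for every $M$ and $\delta=1/M$ there is a finite set $S_M$ with $|S_M|\le L$, a parameter vector $\bm\lambda^{(M)}$ with total mass at most $K$, and a probability measure $\mu_M$ on $\mathbb N_0^{S_M}$. These satisfy \eqref{eq:sz} for all $1\le|\bm\alpha|\le M$ with error at most $1/M$, but $\mathrm{TV}(\mu_M,\nu_{\bm\lambda^{(M)}})>\varepsilon$. Since only the cardinality of $S$ matters, we relabel $S_M$ as a subset of $[L]$ and, passing to a subsequence, assume $S_M=S$ is fixed. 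Padding with zero coordinates if needed, we may take $|S|=L$. Because $\bm\lambda^{(M)}$ lies in the compact set $\{\bm\lambda\ge0:\sum_x\lambda_x\le K\}$, we may further assume $\bm\lambda^{(M)}\to\bm\lambda^*$.

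Next we establish tightness and identify the limit. Taking $\bm\alpha=e_x$ in \eqref{eq:sz} gives $\int z_x\,\mathrm d\mu_M\le K+1$, so $\{\mu_M\}$ is tight on the countable space $\mathbb N_0^S$. Pass to a further subsequence with $\mu_M\to\mu^*$ weakly, which here is the same as pointwise convergence of the mass functions. Fix $\bm\alpha$. For $M\ge|\bm\alpha|+1$ the factorial moments of order $|\bm\alpha|+1$ are bounded uniformly by $(K+1)^{|\bm\alpha|+1}+1$. This gives uniform integrability of $\prod_x(z_x)_{\alpha_x}$ under $\mu_M$, since that product is dominated by a constant plus a multiple of the higher-order factorial products, via $\prod_x z_x^{\alpha_x}\lesssim 1+\sum_y (z_y)_{|\bm\alpha|+1}$. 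Hence
\[
\int\prod_x(z_x)_{\alpha_x}\,\mathrm d\mu^*=\lim_M\prod_x(\lambda^{(M)}_x)^{\alpha_x}=\prod_x(\lambda^*_x)^{\alpha_x}
\]
for every $\bm\alpha$.

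The key step is uniqueness: $\mu^*$ must equal $\nu_{\bm\lambda^*}$. The joint factorial moments $\prod_x(\lambda^*_x)^{\alpha_x}$ grow only geometrically in $|\bm\alpha|$. The multivariate probability generating function $\sum_{\bm\alpha}\prod_x(\lambda^*_x)^{\alpha_x}\prod_x (t_x-1)^{\alpha_x}/\alpha_x!$ therefore converges for all $t$. A measure with such moments is determined by them, for instance through the inclusion–exclusion (Bonferroni) formula $\mu^*(\{\bm k\})=\sum_{\bm j\ge\bm k}(-1)^{|\bm j-\bm k|}\frac{m_{\bm j}}{\prod_x k_x!(j_x-k_x)!}$, where $m_{\bm j}$ are the factorial moments and the series converges absolutely. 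Consequently $\mu^*=\nu_{\bm\lambda^*}$.

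To conclude, $\nu_{\bm\lambda^{(M)}}\to\nu_{\bm\lambda^*}$ in total variation, because $\mathrm{TV}(\mathrm{Pois}(a),\mathrm{Pois}(b))\le|a-b|$ and total variation tensorizes. Also $\mu_M\to\mu^*$ pointwise on a countable space with both limits being probability measures, so by Scheffé's lemma $\mathrm{TV}(\mu_M,\mu^*)\to0$. The triangle inequality then gives $\mathrm{TV}(\mu_M,\nu_{\bm\lambda^{(M)}})\to0$, contradicting the assumption that it exceeds $\varepsilon$. The main obstacle I anticipate is the uniform integrability step that transfers moments to the limit. This is why we use moments of one order higher than $|\bm\alpha|$, and the hypothesis covering all $|\bm\alpha|\le M$ with $M\to\infty$ supplies exactly that. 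Uniqueness through the convergent Bonferroni series is standard once those moment bounds are available.
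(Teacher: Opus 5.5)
Your proposal is correct and follows essentially the same compactness-by-contradiction route as the paper. You fix $|S|$, extract a convergent $\bm\lambda^{(M)}$ and a weak limit via first-moment tightness, transfer factorial moments by uniform integrability from higher-order moments, identify the limit as $\nu_{\bm\lambda^*}$, and conclude by Scheff\'e plus the Poisson coupling bound. The differences are cosmetic. The paper bounds $\int T^\gamma\,\mathrm d\mu_t$ for $T=\sum_x z_x$ via Stirling numbers, and proves uniqueness through $\E[2^T]=e^{\Lambda}$ and the p.g.f.; you use single-coordinate factorial moments and the equivalent Bonferroni inversion series.

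One small fix is needed. The inequality you wrote only gives $L^1$-boundedness, not uniform integrability. What you need is the superlinear version, with $k=|\bm\alpha|$:
\[
\bigl(\textstyle\prod_x z_x^{\alpha_x}\bigr)^{(k+1)/k}\le T^{k+1}\lesssim_{k,L} 1+\sum_y (z_y)_{k+1},
\]
which gives a uniformly bounded moment of order $>1$.
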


\begin{proof}
Suppose the assertion is false. Then there exists $\varepsilon_0>0$
such that, for every integer $M\geq1$ and every $\delta>0$, there exist
a finite set $S$, a vector $\bm\lambda$, and a probability measure
$\mu$ satisfying the assumptions of the lemma and \eqref{eq:sz}, but $\mathrm{TV}(\mu,\nu_{\bm\lambda})>\varepsilon_0$. In particular, for every integer $t\geq1$, there exist finite sets
$S_t$ with $|S_t|\leq L$, vectors
$\bm\lambda^{(t)}=(\lambda_x^{(t)})_{x\in S_t}$, satisfying $\lambda_x^{(t)}\geq0$ and 
$\sum_{x\in S_t}\lambda_x^{(t)}\leq K$, and probability measures $\mu_t$ on $\mathbb N_0^{S_t}$ such that
\begin{equation}
\sup_{\substack{\bm\alpha\in\mathbb N_0^{S_t}\\1\leq|\bm\alpha|\leq t}}
\left|
 \int \prod_{x\in S_t}(z_x)_{\alpha_x} \mathrm d\mu_t(\bm z)
 -\prod_{x\in S_t}(\lambda_x^{(t)})^{\alpha_x}
\right|
\leq\frac{1}{t},
\label{eq:fac-moment-contradiction-sequence}
\end{equation}
but
\begin{equation}
\mathrm{TV}(\mu_t,\nu_{\bm\lambda^{(t)}})
>\varepsilon_0.
\label{eq:TV-separated}
\end{equation}
Considering a subsequence, we may assume that $|S_t|=\ell$ is fixed.
The case $\ell=0$ is trivial, so assume $\ell\geq1$. Relabeling the
elements of $S_t$, we identify every $S_t$ with $[\ell]$. Considering another subsequence, we may also assume that $\bm\lambda^{(t)} \rightarrow\bm\lambda$ in $[0,K]^\ell$, for some $\bm\lambda=(\lambda_1,\ldots,\lambda_\ell)$ satisfying
$\sum_{j=1}^\ell\lambda_j\leq K$. Let $T(\bm z):=\sum_{j=1}^\ell z_j$, for $\bm z = (z_1, z_2, \ldots, z_\ell)$. Taking $\bm\alpha$ to be each of the $\ell$ coordinate unit vectors in  \eqref{eq:fac-moment-contradiction-sequence}, we obtain, for all sufficiently large $t$,
\[
\int T(\bm z) \mathrm d\mu_t(\bm z)
\leq
\sum_{j=1}^\ell\lambda_j^{(t)}
+\frac{\ell}{t}
\leq K+\ell.
\]
Hence, $\{\mu_t\}_{t\geq1}$ is tight on the countable space
$\mathbb N_0^\ell$. This implies that, along a subsequence, we can assume that
\begin{equation}
\mu_t\Rightarrow\mu
\label{eq:mu-weak-limit}
\end{equation}
for some probability measure $\mu$ on $\mathbb N_0^\ell$.

We next identify the mixed factorial moments of $\mu$. Fix
$\bm\alpha=(\alpha_1,\ldots,\alpha_\ell)\in\mathbb N_0^\ell$. If $|\bm\alpha|=0$, there is nothing to prove. Choose an integer $\gamma > |\bm\alpha| $.  For every fixed $s\leq\gamma$, by the multinomial expansion, 
\begin{equation}
(T(\bm z))_s = \sum_{\substack{\bm\beta=(\beta_1,\ldots,\beta_\ell)\in\mathbb N_0^\ell\\
\beta_1+\cdots+\beta_\ell=s}}
\frac{s!}{\beta_1!\cdots\beta_\ell!}
\prod_{j=1}^\ell(z_j)_{\beta_j}  .  
\label{eq:factorial-multinomial}
\end{equation} 
Then \eqref{eq:fac-moment-contradiction-sequence} implies, for all
sufficiently large $t$,
\begin{align}
\int (T(\bm z))_s \mathrm d\mu_t(\bm z)
&\leq
\sum_{\substack{\bm\beta\in\mathbb N_0^\ell\\
\beta_1+\cdots+\beta_\ell=s}}
\frac{s!}{\beta_1!\cdots\beta_\ell!}
\prod_{j=1}^\ell(\lambda_j^{(t)})^{\beta_j} +
\frac{1}{t}
\sum_{\substack{\bm\beta\in\mathbb N_0^\ell\\
\beta_1+\cdots+\beta_\ell=s}}
\frac{s!}{\beta_1!\cdots\beta_\ell!}   \nonumber  \\
&=
\left(\sum_{j=1}^\ell\lambda_j^{(t)}\right)^s
+\frac{\ell^s}{t}
\leq K^s+\ell^s.
\label{eq:factorial-T-bound}
\end{align}
Note that 
\begin{equation*}
T^\gamma = \sum_{s=0}^\gamma \left\{\begin{matrix}\gamma\\s\end{matrix}\right\}(T)_s  ,  
\end{equation*}
where the coefficients are Stirling numbers of the second kind. Then \eqref{eq:factorial-T-bound} gives $$\sup_{t\geq\gamma} \int T(\bm z)^\gamma \mathrm d\mu_t(\bm z) <\infty.$$ Consequently, for $A>0$,
\begin{align*}
\sup_{t\geq\gamma}
\int T(\bm z)^{|\bm \alpha|} \mathbf 1\{T(\bm z)^{|\bm \alpha|}>A\} \mathrm d\mu_t(\bm z)  \leq A^{-\frac{\gamma-^{|\bm \alpha|} }{ ^{|\bm \alpha|} }}
\sup_{t\geq\gamma} \int T(\bm z)^\gamma \mathrm d\mu_t(\bm z) \rightarrow 0  ,  
\end{align*}
as $A\to\infty$. Thus $\{T^{|\bm \alpha|}\}_{t\geq\gamma}$ is uniformly
integrable. Moreover, $$0\leq \prod_{j=1}^\ell(z_j)_{\alpha_j} \leq \prod_{j=1}^\ell z_j^{\alpha_j} \leq
T(\bm z)^{|\bm \alpha|}.$$ Therefore, $\{\prod_{j=1}^\ell(z_j)_{\alpha_j} \}_{t\geq\gamma}$ is uniformly integrable as well. Combining \eqref{eq:fac-moment-contradiction-sequence}, \eqref{eq:mu-weak-limit}, and
$\bm\lambda^{(t)}\to\bm\lambda$, gives, 
\begin{equation}
\int
\prod_{j=1}^\ell(z_j)_{\alpha_j}
 \mathrm d\mu(\bm z)
=
\prod_{j=1}^\ell\lambda_j^{\alpha_j}.
\label{eq:limit-factorial-moments}
\end{equation}

It remains to show that these factorial moments determine the
distribution. Let $\bm Z=(Z_1,\ldots,Z_\ell)$ have law $\mu$, and set $T:=\sum_{j=1}^\ell Z_j$
and $\Lambda:=\sum_{j=1}^\ell\lambda_j$. By \eqref{eq:factorial-multinomial} and \eqref{eq:limit-factorial-moments},
\begin{align*}
\E[(T)_s] = \sum_{\substack{\bm\alpha=(\alpha_1,\ldots,\alpha_\ell)\in\mathbb N_0^\ell\\
\alpha_1+\cdots+\alpha_\ell=s}}
\frac{s!}{\alpha_1!\cdots\alpha_\ell!}
\E\left[
\prod_{j=1}^\ell(Z_j)_{\alpha_j}
\right] &=
\sum_{\substack{\bm\alpha\in\mathbb N_0^\ell\\
\alpha_1+\cdots+\alpha_\ell=s}}
\frac{s!}{\alpha_1!\cdots\alpha_\ell!}
\prod_{j=1}^\ell\lambda_j^{\alpha_j}\\
&=
\left(\sum_{j=1}^\ell\lambda_j\right)^s
=
\Lambda^s  ,  
\end{align*}
for every integer $s\geq0$. Since $2^T = \sum_{s=0}^\infty\frac{1}{s!} (T)_s $, monotone convergence gives,  
\begin{equation}
\E[2^T]
=
\sum_{s=0}^\infty\frac{\E[(T)_s]}{s!}
=
e^\Lambda
<\infty.
\label{eq:exponential-moment}
\end{equation}  
Now, for $(s_1,\ldots,s_\ell)\in[0,1]^\ell$, the factorial expansion
may be integrated term by term, giving
\begin{align}
\E\left[\prod_{j=1}^\ell s_j^{Z_j}\right]
&=
\sum_{\alpha_1,\ldots,\alpha_\ell\geq0}
\frac{(-1)^{\alpha_1+\cdots+\alpha_\ell}}
{\alpha_1!\cdots\alpha_\ell!}
\prod_{j=1}^\ell(1-s_j)^{\alpha_j}
\E\left[
\prod_{j=1}^\ell(Z_j)_{\alpha_j}
\right]
\nonumber\\
&=
\sum_{\alpha_1,\ldots,\alpha_\ell\geq0}
\prod_{j=1}^\ell
\frac{\left(-\lambda_j(1-s_j)\right)^{\alpha_j}}
{\alpha_j!}
\nonumber\\
&=
\exp\left\{
\sum_{j=1}^\ell\lambda_j(s_j-1)
\right\}.
\label{eq:limit-pgf}
\end{align}
The interchange of expectation and summation is justified by
\eqref{eq:exponential-moment}. The RHS of \eqref{eq:limit-pgf} is the probability
generating function of the product Poisson law
$\nu_{\bm\lambda}$. Hence, $\mu=\nu_{\bm\lambda}$. We have therefore proved $\mu_t\Rightarrow\nu_{\bm\lambda}$. Since $\mathbb N_0^\ell$ is countable and discrete, this implies $\mathrm{TV}(\mu_t,\nu_{\bm\lambda}) \rightarrow0$.
Moreover, by the standard Poisson coupling bound and the tensorization
property of Total Variation distance, $\mathrm{TV}(\nu_{\bm\lambda^{(t)}}, \nu_{\bm\lambda})
\leq \sum_{j=1}^\ell |\lambda_j^{(t)}-\lambda_j| \rightarrow 0$.  Therefore, by the triangle inequality, $\mathrm{TV}(\mu_t,\nu_{\bm\lambda^{(t)}}) \rightarrow0$, which contradicts \eqref{eq:TV-separated}. This proves the lemma.
\end{proof}

Next, we establish a convergence result for a sequence of random finite measures that is used in the proof of Theorem \ref{thm:intersectiondistribution} and Theorem \ref{thm:cubedistribution}.

\begin{lemma}
\label{lem:comPoisson}
Let $\lambda:[0,1]\to[0,\infty)$ be continuous.  Suppose $B_1, B_2, \ldots, B_n$ be i.i.d. $\dBer(\frac{1}{n})$ and $Y_1, Y_2, \ldots, Y_n$ are independent random variables,
which are also independent of $B_1, B_2, \ldots, B_n$, such that 
$Y_{i}\sim \mathrm{Pois}(\lambda (\frac{i}{n} ) )$. Define the random finite measure
\begin{equation*}
\Xi_n:=\sum_{i=1}^n B_{i}Y_{i} \delta_{\frac{i}{n}} . 
\end{equation*}
Then $\Xi_n\Rightarrow\Xi$ as random finite measures on $[0,1]$, where
\begin{equation}
\Xi:=\sum_{x\in\mathcal P} N_x\delta_x,
\label{eq:Xi-limit}
\end{equation}
with $\mathcal P$ a Poisson point process on $[0,1]$ of rate 1, and, conditional on $\mathcal P$, the random variables $\{N_x\}_{x\in\mathcal P}$ are independent with $ N_x\sim\mathrm{Pois}(\lambda(x))$. Equivalently, for every nonnegative continuous function $f$ on $[0,1]$,
\begin{equation}
\mathbb E\exp\left\{-\int f \mathrm{d}\Xi\right\}
=
\exp\left\{
 \int_0^1
 \left(
  \exp\{\lambda(x)(e^{-f(x)}-1)\}-1
 \right)dx
\right\}.
\label{eq:compound-Poisson-Laplace}
\end{equation} 
\end{lemma}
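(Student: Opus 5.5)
The plan is to use Laplace functionals. Convergence in distribution of random finite measures on the compact space $[0,1]$ (with the vague/weak topology) is characterized by convergence of $\mathbb E\exp\{-\int f\,\mathrm d\Xi_n\}$ for every nonnegative continuous $f$. So it suffices to compute this quantity for $\Xi_n$ exactly and show it converges to the right-hand side of \eqref{eq:compound-Poisson-Laplace}. We must also check that this right-hand side is indeed the Laplace functional of $\Xi$ in \eqref{eq:Xi-limit}, which identifies the limit.

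\textbf{Step 1: the prelimit functional.} Since the pairs $(B_i,Y_i)$ are independent across $i$,
\[
\mathbb E e^{-\int f\,\mathrm d\Xi_n}=\prod_{i=1}^n \mathbb E\, e^{-f(i/n)B_iY_i}=\prod_{i=1}^n\Bigl(1+\tfrac1n\bigl(\exp\{\lambda(\tfrac in)(e^{-f(i/n)}-1)\}-1\bigr)\Bigr).
\]
This uses the Poisson Laplace transform $\mathbb E e^{-tY}=\exp\{\lambda(e^{-t}-1)\}$. Set $g(x):=\exp\{\lambda(x)(e^{-f(x)}-1)\}-1$. Then $g$ is continuous on $[0,1]$ and takes values in $(-1,0]$, so $|g|\le 1$. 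Using $\log(1+u)=u+O(u^2)$ uniformly for $|u|\le 1/n$, the logarithm of the product equals $\frac1n\sum_i g(i/n)+O(1/n)$. This Riemann sum converges to $\int_0^1 g$ because $g$ is continuous.

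\textbf{Step 2: the limit functional.} Condition on $\mathcal P$. Given $\mathcal P$, the $N_x$ are independent Poisson, so
\[
\mathbb E\bigl[e^{-\sum_x f(x)N_x}\mid\mathcal P\bigr]=\prod_{x\in\mathcal P}(1+g(x)).
\]
Now apply the Laplace (probability generating) functional of a rate-one Poisson process, $\mathbb E\prod_{x\in\mathcal P}h(x)=\exp\{\int_0^1(h-1)\}$ for measurable $0\le h\le1$. Taking $h=1+g$ gives exactly \eqref{eq:compound-Poisson-Laplace}. This also proves the stated equivalence.

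\textbf{Step 3: conclusion.} Laplace functionals over nonnegative continuous $f$ determine the law of a random finite measure on a compact metric space. Their pointwise convergence implies convergence in distribution, for instance by Kallenberg's criterion. Tightness is automatic here: the total mass $\Xi_n([0,1])$ has mean $\frac1n\sum_i\lambda(i/n)\to\int_0^1\lambda<\infty$, and $[0,1]$ is compact.

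The only mildly delicate step is Step 3, namely citing the right continuity theorem for random measures, together with the uniform logarithm expansion in Step 1. Both are standard, so I do not expect a real obstacle.
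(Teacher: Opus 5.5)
Your proposal is correct and follows the paper's proof essentially step for step. Both compute the prelimit Laplace functional exactly via independence and a uniform $\log(1+u)$ expansion of the Riemann sum, identify the limit by conditioning on $\mathcal P$ and applying the Poisson exponential formula, and conclude with the Laplace-functional convergence criterion on the compact space $[0,1]$; your extra tightness remark is harmless but not needed once that criterion is invoked.
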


\begin{proof}
Fix a nonnegative $f\in C([0,1])$.  By independence,
\begin{align*}
\mathbb E\exp\left\{-\int f \mathrm{d}\Xi_n\right\} =\prod_{i=1}^n
\mathbb E\exp\left\{ -B_{i}Y_{i}f\left(\frac{i}{n}\right)
\right\} =\prod_{i=1}^n \left[ 1+\frac1n g_f\left(\frac{i}{n}\right)
\right], 
\end{align*}
where $g_f(x) :=\exp\{\lambda(x)(e^{-f(x)}-1)\}-1$. The function $g_f$ is continuous and bounded.  Hence, by a Taylor expansion one has, 
\begin{align*}
\log\mathbb E\exp\left\{-\int f \mathrm{d}\Xi_n\right\} =\sum_{i=1}^n
\log\left[ 1+\frac1n g_f\left(\frac{i}{n}\right)
\right]  &=\frac1n\sum_{i=1}^n g_f\left(\frac{i}{n}\right)
+O\left(\frac1n\right) \nonumber \\
&\rightarrow\int_0^1 g_f(x)  \mathrm{d}x , 
\label{eq:Laplace-Riemann-sum}
\end{align*} 
as $n \rightarrow \infty$. 
Therefore,
\begin{equation}
\mathbb E\exp\left\{-\int f \mathrm{d}\Xi_n\right\}
\rightarrow
\exp\left\{
 \int_0^1
 \left(
  \exp\{\lambda(x)(e^{-f(x)}-1)\}-1
 \right)dx
\right\}.
\label{eq:Laplace-limit}
\end{equation}
This proves \eqref{eq:compound-Poisson-Laplace}. 

We now identify the limit in \eqref{eq:Laplace-limit}. Let $\mathcal P$ and $\{N_x\}_{x\in\mathcal P}$ be as in the statement of the lemma and define $\Xi$ by \eqref{eq:Xi-limit}.  Conditional on $\mathcal P$,
\begin{align*}
\mathbb E\left[
 \exp\left\{-\int f \mathrm{d}\Xi\right\}
 \mathrel{\Big|}\mathcal P
\right]
&=\prod_{x\in\mathcal P}
 \mathbb E e^{- N_xf(x)}\\
&=\prod_{x\in\mathcal P}
 \exp\{\lambda(x)(e^{-f(x)}-1)\}.
\end{align*}
The exponential formula for a Poisson point process now yields exactly
\eqref{eq:compound-Poisson-Laplace}.  Thus, the RHS of
\eqref{eq:Laplace-limit} is the Laplace functional of $\Xi$.
The Laplace-functional convergence criterion for random finite
measures on the compact space $[0,1]$ proves $\Xi_n\Rightarrow\Xi$.  
\end{proof}

\end{document}